\numberwithin{equation}{section}
\newtheorem{thm}{Theorem}[section]
\newtheorem{cor}[thm]{Corollary}
\newtheorem{lem}[thm]{Lemma}
\newtheorem{prop}[thm]{Proposition}
\newtheorem{rem}[thm]{Remark}
\def\signaf{\bigskip \begin{center} {\sc Alessio Figalli\par\vspace{3mm}
Department of Mathematics\\
The University of Texas at Austin\\
1 University Station, C1200\\
Austin TX 78712, USA\\
email:} {\tt figalli@math.utexas.edu} \end{center}}
\def\signei{\bigskip\begin{center} {\sc Emanuel Indrei\par\vspace{3mm}
Department of Mathematics\\
The University of Texas at Austin\\
1 University Station, C1200\\
Austin TX 78712, USA\\
email:} {\tt eindrei@math.utexas.edu}
\end{center}}
\begin{document}

\begin{center}{\large \textbf{A SHARP STABILITY RESULT FOR THE RELATIVE ISOPERIMETRIC INEQUALITY INSIDE CONVEX CONES}}
\end{center} 
\begin{center}
{\large A. Figalli and  E. Indrei}
\end{center}

\begin{abstract}
The relative isoperimetric inequality inside an open, convex cone $\mathcal C$ states that, at fixed volume, $B_r \cap \mathcal C$ minimizes
the perimeter inside $\mathcal C$. Starting from the observation that this result
can be recovered as a corollary of the anisotropic isoperimetric inequality, 
we exploit a variant of Gromov's proof of the classical isoperimetric inequality
to prove a sharp stability result for the relative isoperimetric inequality inside $\mathcal C$.
Our proof follows the line of reasoning in \cite{Fi}, though several new ideas are needed in order to deal with the lack of translation invariance in our problem.
\end{abstract}

\section{Introduction}
\subsection{Overview}
Recently, there has been a lot of interest in quantitative estimates for isoperimetric \cite{FMP, Fi, Fi3, EFT, NF}, Sobolev \cite{Cord, Cia, Bia, Fi5}, Gagliardo-Nirenberg \cite{CF},
and Brunn-Minkowski \cite{Fi2,BMrmk} type inequalities. The aim of all of these results is to show that if a
set/function almost attains the equality in one of these inequalities, then it is close (in a quantitative way) to a minimizer.
These results have natural
applications in the study of the asymptotic behavior of solutions to evolution equations
\cite{CF}, and
to show stability for minimizers of perturbed variational problems, see for instance \cite{Mur, Fi4}.
Our goal is to investigate stability for the relative isoperimetric inequality inside convex cones. 
This inequality has been used, for instance, to characterize isoperimetric regions inside convex polytopes for small volumes \cite[Corollary 3]{morgan}.
Hence, as in \cite{Fi4}, one may use our stability result to prove quantitative closeness to such isoperimetric regions in perturbed situations.

Let $n\geq 2$, and $\mathcal{C} \subset \mathbb{R}^n$ be an open, convex cone. We denote the unit ball in $\mathbb{R}^n$ centered at the origin by $B_1$
(similar notation is used for a generic ball) and the De Giorgi perimeter of $E$ relative to $\mathcal{C}$ by
\begin{equation} \label{i11}
P(E | \mathcal{C}) := \displaystyle \sup \biggl \{\int_{E} \operatorname{div} \psi dx: \psi \in C_0^{\infty}(\mathcal{C}; \mathbb{R}^n), \hskip .05in |\psi| \leq 1 \biggr \}.
\end{equation}
The relative isoperimetric inequality for convex cones states that if $E \subset \mathcal{C}$ is a Borel set with finite Lebesgue measure $|E|$, then   
\begin{equation} \label{i1}
n|B_1 \cap \mathcal{C}|^{\frac{1}{n}} |E|^{\frac{n-1}{n}} \leq P(E | \mathcal{C}).
\end{equation}

If $E$ has a smooth boundary, the perimeter of $E$ is simply the $(n-1)$-Hausdorff measure of the boundary of $E$
inside the cone (i.e. $P(E | \mathcal{C})=H^{n-1}(\partial E \cap \mathcal{C})$). We also note that if one replaces $\mathcal{C}$ by $\mathbb{R}^n$,
then the above inequality reduces to the classical isoperimetric inequality for which there are many different proofs and formulations (see e.g. \cite{Os},
\cite{GT}, \cite{IC}, \cite{NF}, \cite{BZ}, \cite{BKK}). However, (\ref{i1}) is ultimately due to Lions and Pacella \cite{LP} (see also \cite{RR} for a different proof using
secord order variations). Their proof is based on the Brunn-Minkowski inequality which states that if $A,B \subset \mathbb{R}^n$ are measurable, then
\begin{equation} \label{b1} 
|A+B|^{\frac{1}{n}} \geq |A|^{\frac{1}{n}}+|B|^{\frac{1}{n}}.
\end{equation}

As we will show below, (\ref{i1}) can be seen as an immediate corollary of the anisotropic isoperimetric inequality \eqref{b2}.
This fact suggested to us that there should also be a direct proof of (\ref{i1}) using optimal transport theory (see Theorem \ref{7}), as is the case for the
anisotropic isoperimetric inequality \cite{Cord,Fi}.\footnote{After completion of this work, we discovered that Frank Morgan had already observed that Gromov's argument may be used
to prove the relative isoperimetric inequality inside convex cones \cite[Remark after Theorem 10.6]{morganbook}, though he was thinking about using the Knothe map
instead of the Brenier map. However, as observed in \cite[Section 1.5]{Fi}, the Brenier map is much more powerful than the Knothe map when dealing with stability estimates.} 
The aim of this paper (in the spirit of \cite{Fi}) is to exploit such a proof in order to establish a quantitative version of (\ref{i1}). To make this precise, we need some more notation.  

We define the \textit{relative isoperimetric deficit} of a Borel set $E$ by  
\begin{equation} \label{3}
\mu(E):=\frac{P(E | \mathcal{C})}{n|B_1 \cap \mathcal{C}|^{\frac{1}{n}}|E|^{\frac{n-1}{n}}} -1.
\end{equation}       
Note that (\ref{i1}) implies $\mu(E)\geq 0$. The equality cases were considered in \cite{LP} for the special case when $\mathcal{C} \setminus \{0\}$ is smooth (see also \cite{RR}).
We will work out the general case in Theorem \ref{7} with a self-contained proof. However, the (nontrivial) equality case is not needed in proving the following theorem (which is in any case a much stronger statement):
\begin{thm} \label{t0}
Let $\mathcal{C} \subset \mathbb{R}^n$ be an open, convex cone containing no lines, $K=B_1 \cap \mathcal{C}$, and $E \subset \mathcal{C}$ a set of finite perimeter with $0<|E|<\infty$. Suppose $s>0$ satisfies $|E|=|sK|$. Then there exists a constant $C(n,K) > 0$ such that $$ \displaystyle \frac{|E \Delta (sK)|}{|E|} \leq C(n,K) \sqrt{\mu(E)}.$$
\end{thm}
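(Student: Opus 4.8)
The plan is to mimic Gromov's optimal transport proof of the (anisotropic) isoperimetric inequality, using the Brenier map from the normalized set $E$ to $K = B_1 \cap \mathcal{C}$, and then to track the errors quantitatively, as in \cite{Fi}. Concretely, after rescaling so that $|E| = |K|$, let $T = \nabla \varphi$ be the Brenier map pushing forward $\frac{1}{|E|}\chi_E\,dx$ onto $\frac{1}{|K|}\chi_K\,dx$; here $\varphi$ is convex, so $T$ has a symmetric, nonnegative-definite distributional Jacobian. Since $T$ maps $E$ into $K \subset \mathcal{C}$ and preserves Lebesgue measure, $\det \nabla T = 1$ a.e.\ on $E$, and the arithmetic–geometric mean inequality gives $\operatorname{div} T \geq n (\det \nabla T)^{1/n} = n$ a.e.\ on $E$. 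Integrating over $E$ and applying the divergence theorem (in the BV sense, being careful because $T$ is only BV), one gets
\begin{equation*}
n|E| \leq \int_E \operatorname{div} T \, dx \leq \int_{\partial^* E \cap \mathcal{C}} T \cdot \nu_E \, d\mathcal{H}^{n-1} + (\text{boundary term on } \partial \mathcal{C}).
\end{equation*}
The key geometric point, exactly as in the Lions–Pacella setting, is that on $\partial \mathcal{C}$ the outer normal $\nu_E$ points "outward" from the convex cone while $T(x) \in \mathcal{C}$, and convexity of $\mathcal{C}$ forces $T(x)\cdot \nu_E \leq 0$ there; so that boundary term has the right sign and can be dropped. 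Since $|T| \leq 1$ on $E$ (as $T(x) \in K \subset B_1$), we conclude $n|E| \leq P(E \mid \mathcal{C})$, which is \eqref{i1}; and equality forces $T$ to be close to the identity, which is the source of the stability estimate.

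To upgrade this to the quantitative statement, I would retain all the discarded terms. Writing $1 + \mu(E) = \frac{P(E\mid\mathcal{C})}{n|E|}$, the chain of inequalities above yields, after rearrangement, control of three nonnegative quantities by $\mu(E)$: (i) the "AM–GM deficit" $\int_E \bigl(\operatorname{div} T - n (\det\nabla T)^{1/n}\bigr)\,dx$; (ii) the defect in $|T|\leq 1$, namely $\int_{\partial^* E \cap \mathcal{C}} (1 - |T|)\, d\mathcal{H}^{n-1}$; and (iii) the boundary term $-\int_{\partial\mathcal{C}\cap \partial^* E} T\cdot\nu_E\, d\mathcal{H}^{n-1} \geq 0$. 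The first is the crucial one: following \cite{Fi}, the pointwise gap between $\operatorname{div} T = \sum \lambda_i$ and $n(\prod\lambda_i)^{1/n}$ (where $\lambda_i \geq 0$ are the eigenvalues of $\nabla T$) controls $\int_E |\nabla T - \mathrm{Id}|$ — more precisely $\sum_i (\sqrt{\lambda_i} - (\prod_j\lambda_j)^{1/(2n)})^2$ — and via a trace/Sobolev–Poincaré-type inequality this in turn controls $\| T - x \|_{L^1(E)}$ up to a possible additive constant vector $b$, i.e.\ $\int_E |T(x) - x - b|\,dx \lesssim |E|^{1+1/n}\sqrt{\mu(E)}$. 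One then has to show $b$ is small; this is where the lack of translation invariance bites, and is the main obstacle (see below). Once $b$ is controlled, since $T_\# (\chi_E dx) = \chi_K dx$ and $|T(x) - x|$ is small in $L^1$, a standard argument (push forward, use that $K$ is bounded) converts the $L^1$ closeness of $T$ to the identity into $|E \Delta K| \lesssim \sqrt{\mu(E)} |E|$, giving the theorem after undoing the rescaling.

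The step I expect to be the main obstacle is controlling the translation vector $b$. In the classical isoperimetric problem one simply recenters $E$ and the deficit is unchanged; here translating $E$ destroys the containment $E \subset \mathcal{C}$ and changes $P(E\mid\mathcal{C})$, so that freedom is gone. The assumption that $\mathcal{C}$ contains no lines is presumably exactly what rescues this: it should imply a quantitative "cone-nondegeneracy" statement — roughly, that one cannot translate a large portion of $K$ while keeping it inside $\mathcal{C}$ without paying a definite amount of perimeter, or equivalently that the only vector $b$ for which $b + (K\cap\text{most of }\mathcal C)$ still essentially lies in $\mathcal C$ is $b$ small. I would try to extract $b$ directly from the relation $T_\#(\chi_E\, dx) = \chi_K\, dx$ together with $T(E) \subset \mathcal{C}$: the barycenter of $T$ over $E$ equals the barycenter of $K$, so $b \approx \frac{1}{|E|}\int_E (T - x) = \mathrm{bar}(K) - \mathrm{bar}(E)$, and then use the boundary term (iii), which quantifies how much of $\partial E$ on $\partial\mathcal{C}$ is "seen" by $T$, to argue that $E$ cannot be far from the origin in the cone's axis directions. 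The remaining ingredient is a Poincaré-type trace inequality on sets of small deficit with the correct scaling and with constants depending only on $n$ and $K$; this is technical but follows the template of \cite[Section 3--4]{Fi}, adapted to keep track of the portion of the boundary lying on $\partial\mathcal{C}$.
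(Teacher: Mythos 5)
Your skeleton is the same as the paper's: run Gromov's argument with the Brenier map $T:E\to K$, keep all the discarded error terms, and obtain (via the trace/Poincar\'e machinery of \cite{Fi}, applied after extracting a good subset $G\subset E$ with nontrivial trace constant) a vector $b$ with $\int_{\mathcal{F}E\cap\mathcal{C}}|1-|x-b||\,dH^{n-1}\lesssim\sqrt{\mu(E)}$ and $|E\Delta(b+K)|\lesssim\sqrt{\mu(E)}$. You also correctly isolate the one genuinely new difficulty relative to \cite{Fi}: showing that the translation $b$ produced by the Poincar\'e inequality is itself of size $O(\sqrt{\mu(E)})$, since the deficit $\mu$ is not translation invariant. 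Up to that point the proposal is sound.

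The gap is that your proposed mechanism for controlling $b$ does not constitute a proof. The barycenter identity $\frac{1}{|E|}\int_E(T-x)=\mathrm{bar}(K)-\mathrm{bar}(E)$ only tells you $b\approx\mathrm{bar}(K)-\mathrm{bar}(E)$; bounding this requires knowing $\mathrm{bar}(E)\approx\mathrm{bar}(K)$, which is essentially the conclusion you are after, so the argument is circular. Likewise, smallness of the boundary term $-\int_{\partial\mathcal{C}\cap\mathcal{F}E}\mathrm{tr}_E(T)\cdot\nu_E\,dH^{n-1}$ does not localize $E$: for $E$ close to a translate $v+K$ with $v$ interior to $\mathcal C$, that term is simply zero, and even for $v$ along a face of $\partial\mathcal C$ the integrand can be near zero while $|v|$ is large. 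The paper instead runs a three-stage argument: (i) when $b$ points ``out of'' the cone, the containment $E\subset\mathcal{C}$ forces $|(b+K)\setminus\mathcal{C}|\le|(b+K)\setminus E|\lesssim\sqrt{\mu(E)}$, which bounds $|b|$ by a geometric lemma comparing $|(y+\tilde K)\Delta\tilde K|$ with $|y|$ (Lemmas \ref{18}, \ref{geo}, \ref{19}, \ref{20}); (ii) when $b$ points ``into'' the cone and is a priori small, the trace estimate bounds $P(E|B_{1/2}(0)\cap\mathcal{C})$, and the relative isoperimetric inequality in $B_{1/2}(0)\cap\mathcal{C}$ then shows $(B_{1/2}(0)\cap\mathcal{C})\setminus(b+K)$ has small measure, again forcing $|b|\lesssim\sqrt{\mu(E)}$ (Proposition \ref{21}); and (iii) a delicate bootstrap by contradiction (Proposition \ref{29}) removes the a priori smallness of $b$, by translating $E$ back toward the vertex, checking the deficit degrades only by $O(\mu(E)^{1/2n})$, and applying (ii) to the translated set. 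None of these steps is implied by your barycenter/boundary-term heuristic, and step (iii) in particular is flagged by the authors as resisting even soft compactness arguments; as written, your proof of the key estimate $|b|\lesssim\sqrt{\mu(E)}$ is missing.
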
  

The assumption that $\mathcal{C}$ contains no lines is crucial. To see this, consider the extreme case when $\mathcal{C}=\mathbb{R}^n$. Let $\nu \in \mathbb{S}^{n-1}$ be any unit vector and set $E=2\nu+B_1$ so that $|E \Delta B_1|=2|B_1|>0$.  However, $\mu(E)=0$ so that in this case Theorem \ref{t0} can only be true up to a translation, and this is precisely the main result in \cite{Fi} and \cite{FMP}. Similar reasoning can be applied to the case when $\mathcal{C}$ is a proper convex cone containing a line (e.g. a half space). Indeed, if $\mathcal{C}$ contains a line, then by convexity one can show that (up to a change of coordinates) it is of the form $\mathbb{R} \times \mathcal{\tilde{C}}$, with $\mathcal{\tilde{C}}\subset \mathbb{R}^{n-1}$ an open, convex cone. Therefore, by taking $E$ to be a translated version of $K$ along the first coordinate, the symmetric difference will be positive, whereas the relative deficit will remain $0$. 

In general (up to a change of coordinates), every convex cone is of the form $\mathcal{C}= \mathbb{R}^{k} \times \mathcal{\tilde{C}}$, where $\mathcal{\tilde{C}} \subset \mathbb{R}^{n-k}$ is a convex cone containing no lines. Indeed, Theorem \ref{t0} follows from our main result: 

\begin{thm} \label{t1}
Let $\mathcal{C}=\mathbb{R}^{k} \times \mathcal{\tilde{C}}$, where $k \in \{0,\ldots,n\}$ and $\mathcal{\tilde{C}} \subset \mathbb{R}^{n-k}$ is an open, convex cone containing no lines. Set $K=B_1 \cap \mathcal{C}$, and let $E \subset \mathcal{C}$ be a set of finite perimeter with $0<|E|<\infty$. Suppose $s>0$ satisfies $|E|=|sK|$. Then there exists a constant $C(n,K) > 0$ such that $$ \inf \bigg\{\displaystyle \frac{|E \Delta (sK+x)|}{|E|}: x=(x_1,\ldots,x_k,0,\ldots,0) \bigg \} \leq C(n,K) \sqrt{\mu(E)}.$$
\end{thm}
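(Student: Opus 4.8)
The plan is to reduce Theorem~\ref{t1} to Theorem~\ref{t0} by a factorization argument, and then to prove Theorem~\ref{t0} directly via a quantitative version of the optimal-transport (Gromov/Brenier) proof of \eqref{i1}. Let me sketch both halves.

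\medskip

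\textbf{Reduction of Theorem~\ref{t1} to Theorem~\ref{t0}.} Write $\mathcal C=\mathbb R^k\times\tilde{\mathcal C}$ with $\tilde{\mathcal C}\subset\mathbb R^{n-k}$ containing no lines. The key point is that, by Fubini, for a set $E\subset\mathcal C$ one controls the perimeter $P(E\mid\mathcal C)$ from below by the perimeter of slices and, conversely, equality in \eqref{i1} forces product-like structure. Concretely, I would first apply Theorem~\ref{t0} in the ``no lines'' cone $\tilde{\mathcal C}$ to the slices $E_{x'}:=\{x''\in\tilde{\mathcal C}:(x',x'')\in E\}$ for a.e.\ $x'\in\mathbb R^k$; this is not immediate because the deficit does not slice cleanly, so instead the cleaner route is to observe that the whole problem in $\mathcal C=\mathbb R^k\times\tilde{\mathcal C}$ is, after an affine normalization, a special instance of the anisotropic isoperimetric problem in $\mathbb R^n$ with respect to the convex body $K=B_1\cap\mathcal C$, which is invariant precisely under translations in the first $k$ coordinates. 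Thus the stability result for the anisotropic isoperimetric inequality (Theorem~1.1 of \cite{Fi}, which the excerpt cites as the main result there) yields that $E$ is, up to a translation $x\in\mathbb R^n$, $L^1$-close to $sK$ with the asserted $\sqrt{\mu(E)}$ rate. It then remains to check that the translation may be taken of the form $(x_1,\dots,x_k,0,\dots,0)$: if the component of $x$ in the last $n-k$ variables were bounded away from $0$, then a large portion of $sK+x$ would lie outside $\mathcal C$ (since $\tilde{\mathcal C}$ contains no lines, the inradius of $\tilde{\mathcal C}-v$ at scale $1$ is bounded for $|v|$ large), contradicting $|E\setminus\mathcal C|=0$ together with the smallness of $|E\Delta(sK+x)|$. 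Projecting $x$ onto its first $k$ coordinates changes the symmetric difference by a controlled amount, giving Theorem~\ref{t1}; Theorem~\ref{t0} is the case $k=0$.

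\medskip

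\textbf{Direct proof of the stability estimate (case $\mathcal C$ contains no lines).} Normalize so that $|E|=|K|$, i.e.\ $s=1$. Let $T=\nabla\varphi$ be the Brenier map pushing forward the normalized restriction of Lebesgue measure on $E$ onto that on $K$; $T$ is the gradient of a convex function, $\det\nabla T=1$ a.e.\ on $E$ (after the measure normalization), and $T(E)\subset\overline K$. Since $\nabla T=\nabla^2\varphi$ is symmetric and positive semidefinite with eigenvalues $\lambda_i\ge 0$, the arithmetic–geometric mean inequality gives $n=n(\det\nabla T)^{1/n}\le \operatorname{div}T$ a.e. Integrating over $E$, using the divergence theorem together with $|T|\le 1$ on $\partial E\cap\mathcal C$ and the crucial boundary fact that $T(x)\cdot\nu_E(x)\le 0$ for $H^{n-1}$-a.e.\ $x\in\partial E\cap\partial\mathcal C$ (this uses convexity of the cone and that $T(E)\subset\overline{\mathcal C}$, exactly as in Gromov's argument and the proof of Theorem~\ref{7}), yields
\begin{equation*}
n|K|=n|E|=\int_E n\,dx\le\int_E \operatorname{div}T\,dx\le \int_{\partial E\cap\mathcal C}T\cdot\nu_E\,dH^{n-1}\le P(E\mid\mathcal C),
\end{equation*}
which is \eqref{i1}. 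To get stability, I would quantify each inequality in this chain in terms of $\mu(E)$. The AM–GM gap $\int_E(\operatorname{div}T-n)\,dx$ controls $\int_E\big(\operatorname{tr}\nabla T-n(\det\nabla T)^{1/n}\big)dx$, and by the elementary inequality relating the trace and the determinant one bounds $\int_E|\nabla T-\mathrm{Id}|^2$-type quantities, hence (after a Poincaré–Sobolev trace inequality on $E$, which is where $E$ having small deficit and therefore being ``close to $K$'' bootstraps) one controls $\|T-\mathrm{Id}\|_{L^1(E)}$, and finally $|E\Delta K|\le \|T-\mathrm{Id}\|_{L^1(E)}+(\text{terms from }T(E)\subset\overline K\text{ vs }|T(E)|=|K|)$, all by $C(n,K)\sqrt{\mu(E)}$. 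This is the scheme of \cite{Fi}; the square root (rather than first power) is the expected sharp exponent and arises from the trace-inequality step.

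\medskip

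\textbf{Main obstacle.} The hardest point, and the one where genuinely new ideas beyond \cite{Fi} are required, is the \emph{lack of translation invariance}. In the anisotropic Euclidean problem one is free to recenter $E$, and the trace/Poincaré inequalities are applied on a set that may be assumed to have its barycenter at the origin; here the cone pins down the vertex and no such recentering is allowed, so one must run the Sobolev-trace and the ``$|E\Delta K|$ from $\|T-\mathrm{Id}\|_{L^1}$'' estimates in a genuinely non-translation-invariant fashion. Concretely, one must exploit the boundary term on $\partial E\cap\partial\mathcal C$ — which has no analogue in \cite{Fi} — both to close the isoperimetric inequality and to extract extra quantitative information (namely that $E$ cannot drift away from the vertex), and one needs a version of the relevant functional inequalities on $\mathcal C$-relative sets that degenerates gracefully as $E\to K$. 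Handling this boundary contribution, and proving the requisite trace-type inequality with the constant depending only on $n$ and $K$, is where the bulk of the work lies.
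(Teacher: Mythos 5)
Your overall architecture (use the inequality $\delta_K(E)\le\mu(E)$ to import the anisotropic stability theory of \cite{Fi}, obtain closeness of $E$ to $sK+x$ for \emph{some} $x\in\mathbb R^n$, and then show that the last $n-k$ components of $x$ are themselves controlled by $\sqrt{\mu(E)}$) is exactly the paper's strategy, and your second half correctly reproduces the Gromov--Brenier scheme. But the step where you actually control the translation contains a genuine gap. You argue that if the component $x_2\in\mathbb R^{n-k}$ of the translation were bounded away from $0$, then ``a large portion of $sK+x$ would lie outside $\mathcal C$,'' contradicting $E\subset\mathcal C$. This is only true when $x_2$ points \emph{out of} the cone $\tilde{\mathcal C}$ (this easy half is the paper's Lemma \ref{20}, controlling the perpendicular component $\alpha_2^p$). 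If instead $x_2\in\tilde{\mathcal C}$, then $sK+x\subset\mathcal C$ and no portion of it leaves the cone, so the constraint $E\subset\mathcal C$ gives no information: the set can drift arbitrarily far \emph{into} the cone without ever violating the inclusion. That drift is incompatible with small $\mu(E)$ for a different reason --- the faces of $sK$ lying on $\partial\mathcal C$ are free for the relative perimeter, but after translation into the interior they start to count --- and quantifying this is the entire difficulty of the paper (the authors note in a footnote that they know no simple argument, even by compactness, showing $\alpha_2(E)\to0$ as $\mu(E)\to0$).

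The missing ingredients are: (i) one cannot use \cite[Theorem 1.1]{Fi} as a black box, because the argument needs, for the \emph{same} vector $\alpha$, both the $L^1$ estimate $|E\Delta(\alpha+K)|\lesssim\sqrt{\mu(E)}$ and the boundary trace estimate $\int_{\mathcal F E\cap\mathcal C}\bigl|1-|x-\alpha|\bigr|\,dH^{n-1}\lesssim\sqrt{\mu(E)}$, which says that $\mathcal F E\cap\mathcal C$ concentrates near the unit sphere centered at $\alpha$; (ii) when $|\alpha_2|$ is a priori small, this trace estimate combined with the relative isoperimetric inequality in $B_{1/2}(0)\cap\mathcal C$ shows that $\bigl|(B_{1/2}(0)\cap\mathcal C)\setminus((0,\alpha_2)+K)\bigr|\lesssim\sqrt{\mu(E)}$, and a geometric lemma about the cone (the set $\tilde K$ of Lemma \ref{geo}, which requires $\tilde{\mathcal C}$ to contain no lines) converts this volume bound into $|\alpha_2|\lesssim\sqrt{\mu(E)}$ (Proposition \ref{21}); and (iii) the a priori smallness of $|\alpha_2|$ must then be removed by a separate bootstrap: assuming $|\alpha_2^c|$ is large, one translates $E$ back toward the vertex, checks that the relative deficit of the truncated, rescaled set is still controlled by a power of $\mu(E)$, applies step (ii) to it, and derives a contradiction (Proposition \ref{29}). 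Your ``main obstacle'' paragraph correctly identifies where the difficulty lies, but the mechanism you propose to overcome it fails precisely in the hard case, so the proof as written does not close.
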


Let us remark that if $k=n$, then $\mathcal{C}=\mathbb{R}^n$ and the theorem reduces to the main result of \cite{Fi}, the only difference being that here we do not attempt to find any explicit upper bound on the constant $C(n,K)$. However, since all of our arguments are ``constructive," it is possible to find explicit upper bounds on $C(n,K)$ in terms on $n$ and the geometry of $\mathcal{C}$ (see also Section 1.4).          

\subsection{The anisotropic isoperimetric inequality}

As we will show below, our result is strictly related to the quantitative version of the anisotropic isoperimetric inequality proved in \cite{Fi}. To show this link, we first introduce some more notation.   
Suppose $K$ is an open, bounded, convex set, and let \footnote{Usually in the definition of $||\cdot||_{K^*}$, $K$ is assumed to contain the origin. However, this is not needed (see Lemma \ref{6}).}
\begin{equation} \label{2} 
||\nu||_{K*} := \sup\{\nu \cdot z: z\in K\}. 
\end{equation}
The anisotropic perimeter of a set $E$ of finite perimeter (i.e. $P(E|\mathbb{R}^n) < \infty$) is defined as
\begin{equation} \label{4}
P_K(E):=\int_{\mathcal{F} E} ||\nu_E(x)||_{K*} dH^{n-1}(x),
\end{equation}
where $\mathcal{F}E$ is the reduced boundary of $E$, and $\nu_E: \mathcal{F}E \rightarrow \mathbb{S}^{n-1}$ is the measure theoretic \textit{outer} unit normal (see Section 2). Note that for $\lambda>0$, $P_K(\lambda E)=\lambda^{n-1} P_K(E)$ and $P_K(E)=P_K(E+x_0)$ for all $x_0 \in \mathbb{R}^n$. If $E$ has a smooth boundary, $\mathcal{F} E = \partial E$ so that for $K=B_1$ we have $P_K(E)=H^{n-1}(\partial E).$ In general, one can think of $|| \cdot ||_{K*}$ as a weight function on unit vectors. Indeed, $P_K$ has been used to model surface tensions in the study of equilibrium configurations of solid crystals with sufficiently small grains (see e.g. \cite{Wu}, \cite{He}, \cite{Ty}) and also in modeling surface energies in phase transitions (see \cite{Gu}). 

The anisotropic isoperimetric inequality states 
\begin{equation} \label{b2}
n|K|^{\frac{1}{n}}|E|^{\frac{n-1}{n}} \leq P_K(E).
\end{equation}             
This estimate (including equality cases) is well known in the literature (see e.g. \cite{Di, VS, DP, Ty, FM, BM, MS}). In particular, Gromov \cite{MS} uses certain properties of the Knothe map from $E$ to $K$ in order to establish (\ref{b2}). However, as pointed out in \cite{Cord} and \cite{Fi}, the argument may be repeated verbatim if one uses the Brenier map instead. This approach leads to certain estimates which are helpful in proving a sharp stability theorem for (\ref{b2}) (see \cite[Theorem 1.1]{Fi}). Using the anisotropic perimeter, we now introduce the isoperimetric deficit of $E$ 
\begin{equation} \label{5}
\delta_K(E):= \frac{P_K(E)}{n|K|^{\frac{1}{n}}|E|^{\frac{n-1}{n}}} -1.
\end{equation}  
 
Note that $\delta_K(\lambda E)=\delta_K(E)$ and $\delta_K(E+ x_0)=\delta_K(E)$ for all $\lambda>0$ and $x_0 \in \mathbb{R}^n$. Thanks to (\ref{b2}) and the associated equality cases, we have $\delta_K(E) \geq 0$ with equality if and only if $E$ is equal to $K$ (up to a scaling and translation). Note also the similarity between $\mu$ and $\delta_K$. Indeed, they are both scaling invariant; however, $\mu$ may not be translation invariant (depending on $\mathcal{C}$). We denote the asymmetry index of $E$ by               
\begin{equation}
A(E):= \displaystyle \inf \bigg\{ \frac{E \Delta (x_0+rK)}{|E|}: x_0 \in \mathbb{R}^n, |rK|=|E| \bigg \}.
\end{equation}      
The general stability problem consists of proving an estimate of the form
\begin{equation} \label{b3}
A(E) \leq C \delta_K(E)^{\frac{1}{\beta}},
\end{equation}  
where $C=C(n,K)$ and $\beta=\beta(n,K)$. In the Euclidean case (i.e. $K=B_1$), Hall conjectured that (\ref{b3}) should hold with $\beta=2$, and this was confirmed by Fusco, Maggi, Pratelli \cite{FMP}. Indeed, the $\frac{1}{2}$ exponent is sharp (see e.g. \cite[Figure 4]{Ma}). Their proof depends heavily on the full symmetry of the Euclidean ball. For the general case when $K$ is a generic convex set, non-sharp results were obtained by Esposito, Fusco, Trombetti \cite{EFT}, while the sharp estimate was recently obtained by Figalli, Maggi, Pratelli \cite{Fi}. Their proof uses a technique based on optimal transport theory. For more information about the history of (\ref{b3}), we refer the reader to \cite{Fi} and \cite{Ma}. 

\subsection{Sketch of the proof of Theorem \ref{t1}}

We now provide a short sketch of the proof of Theorem \ref{t1} for the case when
$|E|=|K|$ and $E$ has a smooth boundary.
The first key observation is that the relative isoperimetric inequality inside a convex cone is a direct consequence of the anisotropic isoperimetric inequality with $K=B_1 \cap \mathcal{C}$. Indeed, as follows from the argument in Section \ref{sect:isop ineq}, $P_K(E) \leq H^{n-1}(\partial E \cap \mathcal C)$, so \eqref{i1} follows immediately from \eqref{b2}.
This observation suggests that one may exploit Gromov's argument in a similar way as in the proof of \cite[Theorem 1.1]{Fi} to obtain additional information
on $E$.
Indeed, we can show that there exists a vector $\alpha=\alpha(E) \in \mathbb{R}^n$ such that \footnote{The existence of a vector $\alpha$ such that (\ref{12r}) holds is exactly the main result in \cite{Fi}. However, here we need to show that we can find a vector such that both (\ref{12o}) and (\ref{12r}) hold simultaneously.}  
\begin{equation} \label{12o} 
\int_{\partial E \cap \mathcal{C}} |1-|x-\alpha|| dH^{n-1} \leq C(n,K) \sqrt{\delta_K(E)}, 
\end{equation}
\begin{equation} \label{12r}
|E \Delta (\alpha + K)| \leq C(n,K)\sqrt{\delta_K(E)}.
\end{equation}
Let us write $\alpha=(\alpha_1, \alpha_2)$, with $\alpha_1 \in \mathbb{R}^{k}$ and $\alpha_2 \in \mathbb{R}^{n-k}$. Moreover, let $\tilde{E}:= E-(\alpha_1,0)$. Then using that $\mathcal{C}=\mathbb{R}^k \times \mathcal{\tilde{C}}$, we obtain $\partial E \cap \mathcal{C} - (\alpha_1,0)=\partial \tilde{E} \cap \mathcal{C}$; therefore,    
\begin{equation} \label{12a}
\int_{\partial \tilde{E} \cap \mathcal{C}} |1-|x-(0,\alpha_2)|| dH^{n-1} \leq C(n,K) \sqrt{\delta_K(E)}, 
\end{equation}
\begin{equation} \label{12e}
|\tilde{E} \Delta ((0,\alpha_2) + K)| \leq C(n,K)\sqrt{\delta_K(E)}.
\end{equation}
Since $\delta_K(E) \leq \mu(E)$ (see Corollary \ref{8}), (\ref{12a}) and (\ref{12e}) hold with $\mu(E)$ in place of $\delta_K(E)$ (see Lemmas \ref{15} \& \ref{l0}). Thanks to (\ref{12e}), we see that our result would readily follow if we can show 
\begin{equation} \label{t16}
|\alpha_2| \leq C(n,K)\sqrt{\mu(E)}.
\end{equation}
Indeed, since $|((0,\alpha_2)+K) \Delta K| \sim |\alpha_2|$ (see Lemmas \ref{17} \& \ref{18}),
\begin{align*}
\displaystyle \frac{|\tilde{E} \Delta K|}{|E|} &\leq \frac{1}{|K|} \bigl(|\tilde{E} \Delta ((0,\alpha_2)+K)| + |((0,\alpha_2)+K) \Delta K| \bigr) \\
&\leq \tilde{C}(n,K)\sqrt{\mu(E)},
\end{align*}
which, of course, implies Theorem \ref{t1}.    
Therefore, we are left with proving (\ref{t16}). Firstly, assume that $\mu(E)$ and $|\alpha_2|$ are sufficiently small (i.e. smaller than a constant depending only on $n$ and $K$). By (\ref{12a}) and the fact that (see Section $2$) $$H^{n-1}(\partial \tilde{E} \cap (B_{\frac{3}{4}}((0,\alpha_2)) \cap \mathcal{C}) ) = P(\tilde{E}| B_{\frac{3}{4}}((0,\alpha_2)) \cap \mathcal{C}),$$ we have          
\begin{align*}
C(n,K) \sqrt{\mu(E)} &\geq  \int_{\partial \tilde{E} \cap \mathcal{C}} |1-|x-(0,\alpha_2)|| dH^{n-1}(x)\\
&\geq \int_{\partial \tilde{E} \cap \mathcal{C} \cap \{|1-|x-(0,\alpha_2)|| \geq \frac{1}{4} \}} |1-|x-(0,\alpha_2)|| dH^{n-1}(x) \\
&\geq \frac{1}{4} H^{n-1}\big(\partial \tilde{E} \cap \mathcal{C} \cap \big\{|1-|x-(0,\alpha_2)|| \geq \frac{1}{4} \big\}\big)\\
&\geq \frac{1}{4}H^{n-1}(\partial \tilde{E} \cap (B_{\frac{3}{4}}((0,\alpha_2)) \cap \mathcal{C}) )\\
&= \frac{1}{4} P(\tilde{E}| B_{\frac{3}{4}}((0,\alpha_2)) \cap \mathcal{C}). 
\end{align*} 
But since $|\alpha_2|$ is small, $B_{\frac{1}{2}}(0) \cap \mathcal{C} \subset B_{\frac{3}{4}}((0,\alpha_2)) \cap \mathcal{C}$; hence, $$P(\tilde{E}| B_{\frac{3}{4}}((0,\alpha_2)) \cap \mathcal{C}) \geq P(\tilde{E}|B_{\frac{1}{2}}(0) \cap \mathcal{C}).$$ Moreover, thanks to the relative isoperimetric inequality inside $B_{\frac{1}{2}}(0) \cap \mathcal{C}$ (see e.g. \cite[Inequality (3.43)]{AFP}), we have that for $\mu(E)$ small enough,   
\begin{align} 
C(n,K) &\sqrt{\mu(E)} \nonumber\\
&\geq \frac{1}{4} c(n, K) \min \big\{|\tilde{E} \cap (B_{\frac{1}{2}}(0) \cap \mathcal{C})|^{\frac{n-1}{n}}, |(B_{\frac{1}{2}}(0) \cap \mathcal{C}) \setminus \tilde{E}|^{\frac{n-1}{n}} \big\} \nonumber \\
&\geq \frac{1}{4} c(n, K) \min\big \{|\tilde{E} \cap (B_{\frac{1}{2}}(0) \cap \mathcal{C})|, |(B_{\frac{1}{2}}(0) \cap \mathcal{C}) \setminus \tilde{E}| \big\} \nonumber \\ 
&=\frac{1}{4} c(n, K) |(B_{\frac{1}{2}}(0) \cap \mathcal{C}) \setminus \tilde{E}|, \label{t14}
\end{align}
where in the last step we used that $\tilde{E}$ is close to $(0,\alpha_2)+K$ (see (\ref{12e})) and $|\alpha_2|$ is small. Therefore, using (\ref{12e}) and (\ref{t14}), 
\begin{align}
|(B_{\frac{1}{2}}(0) \cap \mathcal{C}) \setminus ((0,\alpha_2) + K)| &\leq |(B_{\frac{1}{2}}(0) \cap \mathcal{C}) \setminus \tilde{E}| + |\tilde{E} \setminus ((0,\alpha_2) + K)| \nonumber \\
&\leq \frac{4C(n,K)}{c(n,K)} \sqrt{\mu(E)} + C(n,K) \sqrt{\mu(E)} \nonumber \\
&\leq \tilde{C}(n,K) \sqrt{\mu(E)} \label{t17}. 
\end{align}
Since $\mathcal{\tilde{C}}$ contains no lines, by some simple geometric considerations one may reduce the problem to the case when $\alpha_2 \in \{(x_{k+1},\ldots, x_n) \in \mathbb{R}^{n-k}: x_n \geq 0 \}$ (see Lemma \ref{19}), and then it is not difficult to prove $$c(n,K)|\alpha_2| \leq |(B_{\frac{1}{2}}(0) \cap \mathcal{C}) \setminus ((0, \alpha_2) + K)|,$$ which combined with (\ref{t17}) establishes (\ref{t16}), and hence, the theorem.

Next, we briefly discuss the assumptions for the sketch of the proof above. Indeed, one may remove the size assumption on $|\alpha_2|$ by showing that if $\mu(E)$ is small enough, then $|\alpha_2|$ will be automatically small
(see Proposition \ref{29}).\footnote{Let us point out that this is a nontrivial fact. Indeed, in our case we want to prove in an explicit, quantitative way that $\mu(E)$ controls $\alpha_2(E)$; hence, we want to avoid any compactness argument. However, even using compactness, we do not know any simple argument which shows that $\alpha_2(E) \rightarrow 0$ as $\mu(E) \rightarrow 0$.}   Furthermore, we may freely assume that $\mu(E)$ is small    
since if $\mu(E) \geq c(n,K)>0$, then the theorem is trivial: 
\begin{equation*}
\frac{|E \Delta (sK)|}{|E|} \leq 2 \leq \frac{2}{\sqrt{c(n,K)}} \sqrt{\mu(E)}.
\end{equation*}
The regularity of $E$ was used in order to apply the Sobolev-Poincar{\'e} type estimate \cite[Lemma 3.1]{Fi} which yields (\ref{12a}) (see Lemma \ref{15}). If $E$ is a general set of finite perimeter in $\mathcal{C}$ with finite mass and small relative deficit, then Lemma \ref{21a} tells us that $E$ has a sufficiently regular subset $G$ so that $|E\setminus G|$ and $\mu(G)$ are controlled by $\mu(E)$. Combining this fact with the argument above yields the theorem for general sets of finite perimeter (see Proposition \ref{21}). Lastly, the assumption on the mass of $E$ (i.e. $|E|=|K|$) can be removed by a simple scaling argument. 

\subsection{Sharpness of the result}
We now discuss the sharpness of the estimate in Theorem \ref{t1}. Indeed, it is well known that there exists a sequence of ellipsoids $\{E_h\}_{h \in \mathbb{N}}$, symmetric with respect to the origin and converging to the ball $B_1$, such that $$\displaystyle \lim_{h\rightarrow \infty} \sup \frac{\sqrt{\delta_{B_1}(E_h)}}{|E_h \Delta (s_h B_1)|} < \infty, \hskip .3in \displaystyle \lim_{h\rightarrow \infty} \delta_{K_0}(E_h) = 0,$$ where $s_h=\bigg(\frac{|E_h|}{|B_1|}\bigg)^{\frac{1}{n}}$ (see e.g. \cite[pg. 382]{Ma}). Consider the cone $\mathcal{C}=\{x \in \mathbb{R}^n : x_1, \ldots, x_n>0\}$ and set $\tilde{E}_h:=E_h \cap \mathcal{C}$. By symmetry, it follows that $\delta_{B_1}(\tilde{E}_h) = \frac{1}{2^n}\delta_{B_1}(E_h)$ and $|\tilde{E}_h \Delta (s_h K)|=\frac{1}{2^n} |E_h \Delta (s_h B_1)|$. We also note that $$P(\tilde{E}_h | \mathcal{C})=H^{n-1}(\partial \tilde{E}_h \cap \mathcal{C})=\frac{1}{2^n}H^{n-1}(\partial E_h)=\frac{1}{2^n} P_{B_1}(E_h),$$ $$|\tilde{E}_h|=\frac{1}{2^n} |E_h|, \hskip .1in |B_1 \cap \mathcal{C}|=\frac{1}{2^n} |B_1|.$$ Therefore,

\begin{align*}
\mu(\tilde{E}_h)&=\frac{P(\tilde{E}_h | \mathcal{C})}{n|B_1 \cap \mathcal{C}|^{\frac{1}{n}}|\tilde{E}_h|^{\frac{n-1}{n}}} -1 =\frac{\frac{1}{2^n}H^{n-1}(\partial E_h)}{n (\frac{1}{2^n} |B_1|)^{\frac{1}{n}}(\frac{1}{2^n}|E|)^{\frac{n-1}{n}}} -1 \\
&=\frac{P_{B_1}(E_h)}{n|B_1 \cap \mathcal{C}|^{\frac{1}{n}}|E_h|^{\frac{n-1}{n}}} -1=\delta_{B_1}(E_h), 
\end{align*}
and we have $$\displaystyle \lim_{h \rightarrow \infty} \sup \frac{\sqrt{\mu(\tilde{E}_h)}}{|\tilde{E}_h \Delta (s_h K)|} < \infty.$$ This example shows that the $\frac{1}{2}$ exponent in the theorem cannot, in general, be replaced by something larger.

One may wonder whether it is possible for Theorem \ref{t1} to hold with a constant depending only on the dimension and not on the cone. Indeed, in \cite[Theorem 1.1]{Fi}, the constant does not depend on the convex set associated to the anisotropic perimeter. However, this is not so in our case. To see this, consider a sequence of open, symmetric cones in $\mathbb{R}^2$ indexed by their opening $\theta$. Let $E_\theta$ be a unit half-ball along the boundary of the cone $\mathcal{C}_\theta$ disjoint from $s_{\theta}K_\theta$ (see Figure 1), where $s_{\theta}=\bigg(\frac{|E_{\theta}|}{|B_1 \cap \mathcal{C}_{\theta}|}\bigg)^{\frac{1}{2}}$. Note that $\mu(E_\theta)=\frac{\pi}{2\big(\frac{\theta}{2}\big)^{\frac{1}{2}}\big(\frac{\pi}{2}\big)^{\frac{1}{2}}}-1$. Therefore, $$\displaystyle \lim_{\theta \rightarrow \pi^-} \frac{|E_\theta \Delta (s_{\theta}K_\theta)|}{|E_\theta| \sqrt{\mu(E_\theta)}}= \displaystyle \lim_{\theta \rightarrow \pi^-} \frac{2}{\sqrt{\frac{\pi}{\sqrt{\theta \pi}}-1}} = \infty.$$ 
\begin{figure}[h!]
\centering 
\includegraphics[width=.5 \textwidth]{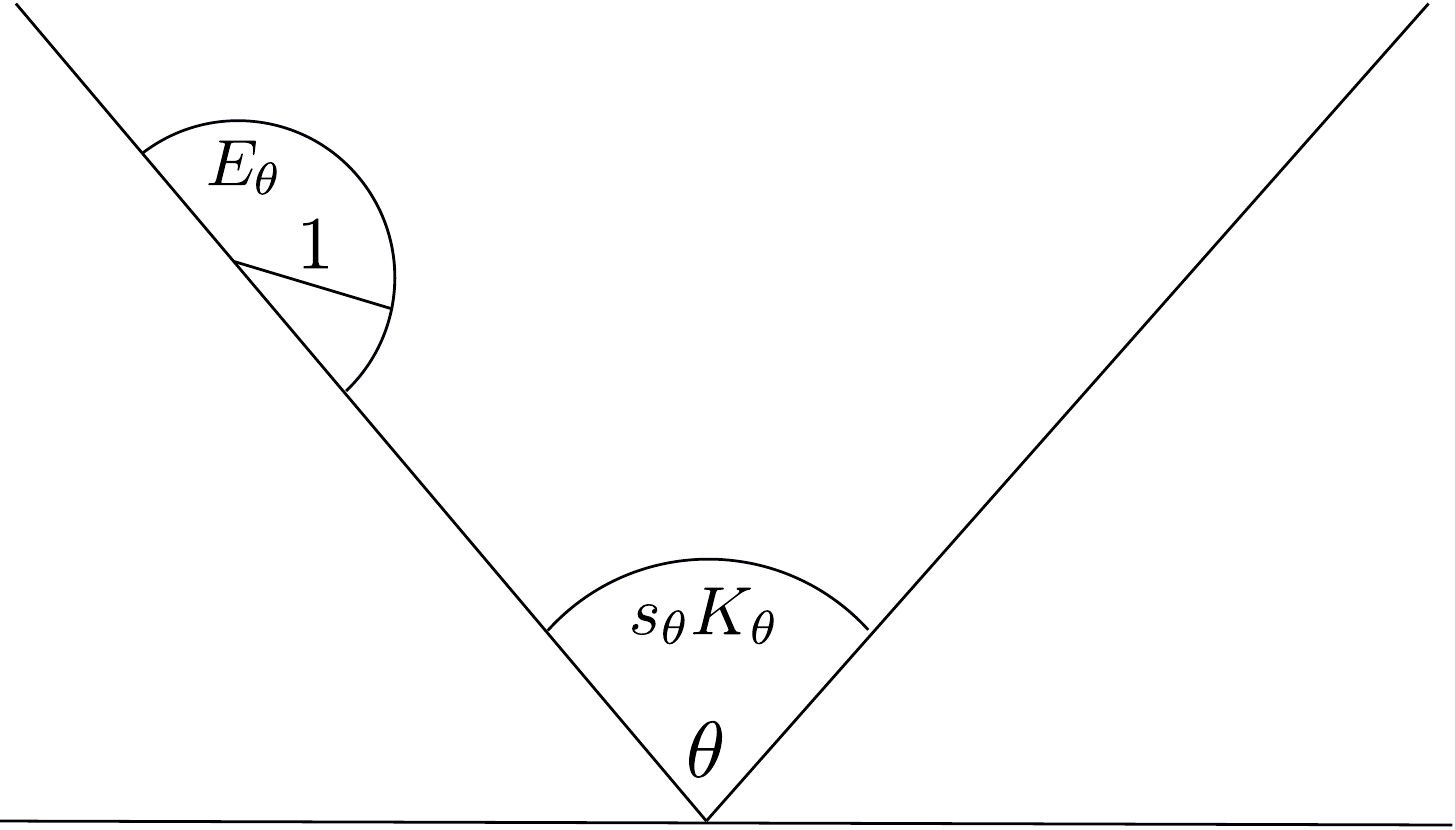}
\caption{An example which shows that the constant in Theorem \ref{t1} cannot be replaced by a constant depending only on the dimension.}
\end{figure}

\section{Preliminaries}
\subsection{Initial setup}

Endow the space $\mathbb{R}^{n \times n}$ of $n \times n$ tensors with the metric $|A|=\sqrt{\operatorname{trace}(A^TA)}$, where $A^T$ denotes the transpose of $A$. Let $T \in L_{\operatorname{loc}}^1(\mathbb{R}^n; \mathbb{R}^n)$ and denote the distributional derivative of $T$ by $DT$, i.e. $DT$ is an $n\times n$ matrix of measures $D_jT^i$ in $\mathbb{R}^n$ satisfying
\begin{equation*}
\int_{\mathbb{R}^n} T^i \frac{\partial \phi}{\partial x_j} dx = - \int_{\mathbb{R}^n} \phi \hskip.05in d D_jT^i \hskip .1in \forall \phi \in C^1_c(\mathbb{R}^n), \hskip .1in i,j=1,\ldots,n.
\end{equation*}
If $C\subset \mathbb{R}^n$ is a Borel set, then
$DT(C)$ is the $n \times n$ tensor whose entries are given by
$\bigl(D_jT^i(C) \bigr)_{i,j=1,\ldots,n}$,
and $|DT|(C)$ is the total variation of $DT$ on $C$ with respect to the metric defined above, i.e.  
\begin{equation*}
|DT|(C)=\displaystyle \sup \bigg\{ \displaystyle \sum_{h\in \mathbb{N}} \sum_{ij} |D_iT^j(C_h)| : C_h \cap C_k = \emptyset, \displaystyle \bigcup_{h \in \mathbb{N}} C_h \subset C \bigg \}.
\end{equation*}
Let $BV(\mathbb{R}^n; \mathbb{R}^n)$ be the set of all $T \in L^1(\mathbb{R}^n; \mathbb{R}^n)$ with $|DT|(\mathbb{R}^n) < \infty$. For such a $T$, decompose $DT=\nabla T dx + D_sT$, where $\nabla T$ is the density with respect to the Lebesgue measure and $D_sT$ is the corresponding singular part. Denote the distributional divergence of $T$ by $\operatorname{Div}T := \operatorname{trace}(DT)$, and let $\operatorname{div}(T):= \operatorname{trace}(\nabla T(x))$. Then we have $\operatorname{Div}T= \operatorname{div}T dx + \operatorname{trace}(D_sT)$. If $DT$ is symmetric and positive definite, note that 
\begin{equation} \label{las1}
\operatorname{trace}(D_sT) \geq 0.
\end{equation}

If $E$ is a set of finite perimeter in $\mathbb{R}^n$, then the reduced boundary $\mathcal{F} E$ of $E$ consists of all points $x\in \mathbb{R}^n$ such that $0<|D \mathbf{1}_E|(B_r(x)) < \infty$ for all $r>0$ and the following limit exists and belongs to $\mathbb{S}^{n-1}$: $$\displaystyle \lim_{r\rightarrow 0^{+}} \frac{D \mathbf{1}_E(B_r(x))}{|D \mathbf{1}_E|(B_r(x))}=:-\nu_E(x).$$ We call $\nu_E$ the measure theoretic \textit{outer} unit normal to $E$. By the well-known representation of the perimeter in terms of the Hausdorff measure, one has $P(E|\mathcal{C})=H^{n-1}(\mathcal{F} E \cap \mathcal{C})$ (see e.g. \cite[Theorem 3.61]{AFP} and \cite[Equation (3.62)]{AFP}). This fact along with one of the equality cases in (\ref{i1}) -- $n|B_1\cap \mathcal{C}|=H^{n-1}(\partial B_1 \cap \mathcal{C})$ -- yields the following useful representation of the relative deficit (recall that $s>0$ satisfies $|E|=|sK|$): 
\begin{equation} \label{fin20}
\mu(E)=\frac{H^{n-1}(\mathcal{F} E \cap \mathcal{C}) - H^{n-1}(\partial B_s \cap \mathcal{C})}{H^{n-1}(\partial B_s \cap \mathcal{C})}.
\end{equation}

Next, if $T \in BV(\mathbb{R}^n; \mathbb{R}^n)$, then for $H^{n-1}$- a.e. $x \in \mathcal{F} E$ there exists an inner trace vector $\operatorname{tr}_E(T)(x) \in \mathbb{R}^n$ (see \cite[Theorem 3.77]{AFP}) which satisfies $$\displaystyle \lim_{r \rightarrow 0^+} \frac{1}{r^n} \int_{B_r(x) \cap \{y: (y-x) \cdot \nu_E(x)<0 \}} |T(y)-\operatorname{tr}_E(T)(x)|dy=0.$$ 
Furthermore, $E^{(1)}$ denotes the set of points in $\mathbb{R}^n$ having density $1$ with respect to $E$; i.e. $x \in E^{(1)}$ means $$\displaystyle \lim_{r\rightarrow 0^{+}} \frac{|E \cap B_r(x)|}{|B_r(x)|}=1.$$ 

Having developed the necessary notation, we are ready to state the following general version of the divergence theorem (see e.g. \cite[Theorem 3.84]{AFP}) which will help us prove the isoperimetric inequality for convex cones (i.e. Theorem \ref{7}): 
\begin{equation} \label{7b}
\operatorname{Div}T (E^{(1)}) = \int_{\mathcal{F} E} \operatorname{tr}_E(T)(x) \cdot \nu_E(x) dH^{n-1}(x).  
\end{equation}

Now we develop a few more tools that will be used throughout the paper. Fix $K:=B_1 \cap \mathcal{C}$, and let $$\mathcal{D}:=\{ E \subset \mathcal{C}: P(E|\mathcal{C}) < \infty, |E|<\infty \}.$$ To apply the techniques in \cite{Fi}, we need a convex set that contains the origin. Therefore, let us translate $K$ by the vector $x_0 \in -K$ which minimizes the ratio $\frac{M_{K_{0}}}{m_{K_{0}}}$, where $K_0=K+x_0$, 
\begin{equation} \label{9}
m_{K_0} := \inf \{||\nu||_{K_0*} : \nu \in \mathbb S^{n-1} \}>0, \hskip .1in M_{K_0} := \sup \{||\nu||_{K_0*} : \nu \in \mathbb S^{n-1} \}>0,
\end{equation}
and $||\nu||_{K_0*}$ is defined as in (\ref{2}). Next, we introduce the \textit{Minkowski gauge} associated to the convex set $K_0$:
\begin{equation} \label{1}
||z||_{K_0}:= \inf \bigg\{\lambda>0 : \frac{z}{\lambda} \in K_0 \bigg\}.
\end{equation}
Note that the convexity of $K_0$ implies the triangle inequality for $||\cdot||_{K_0}$ so that it behaves sort of like a norm; however,  if $K_0$ is not symmetric with respect to the origin, $||x||_{K_0} \neq ||-x||_{K_0}$. Hence, this ``norm" is in general not a true norm. Nevertheless, the following estimates relate this quantity with the standard Euclidean norm $|\cdot|$ (see \cite[Equations (3.2) and (3.9)]{Fi}):  
\begin{equation} \label{10}
\frac{|x|}{M_{K_0}} \leq ||x||_{K_0} \leq \frac{|x|}{m_{K_0}},
\end{equation}

\begin{equation} \label{10a}
||y||_{K_0*} \leq \frac{M_{K_0}}{m_{K_0}} ||-y||_{K_0*}.
\end{equation}

Recall that the isoperimetric deficit $\delta_K(\cdot)$ is scaling and translation invariant in its argument. The next lemma states that it is also translation invariant in $K$ (observe that if $z_0+K$ does not contain the origin, then $||\cdot||_{z_0+K}$ can also be negative in some direction).
\begin{lem} \label{6}
Let $E \in \mathcal{D}$. Then $\delta_{z_{0}+K}(E)=\delta_{K}(E)$ for all $z_0 \in \mathbb{R}^n$. 
\end{lem}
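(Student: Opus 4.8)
The goal is to show that the anisotropic isoperimetric deficit $\delta_K(E)$ is unchanged if we translate the convex set $K$ defining the anisotropic perimeter by an arbitrary vector $z_0 \in \mathbb{R}^n$. Since the deficit is defined by
$$\delta_{z_0+K}(E) = \frac{P_{z_0+K}(E)}{n|z_0+K|^{\frac{1}{n}}|E|^{\frac{n-1}{n}}} - 1,$$
and since $|z_0+K| = |K|$ trivially (Lebesgue measure is translation invariant), the whole statement reduces to proving that the anisotropic perimeter itself is translation invariant in $K$, i.e. $P_{z_0+K}(E) = P_K(E)$.

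\textbf{Plan.} First I would unwind the definition of the support-type functional. By \eqref{2}, for $\nu \in \mathbb{S}^{n-1}$,
$$\|\nu\|_{(z_0+K)*} = \sup\{\nu \cdot w : w \in z_0 + K\} = \sup\{\nu \cdot (z_0 + z) : z \in K\} = \nu \cdot z_0 + \|\nu\|_{K*}.$$
Hence, plugging into the definition \eqref{4} of the anisotropic perimeter,
$$P_{z_0+K}(E) = \int_{\mathcal{F}E} \|\nu_E(x)\|_{(z_0+K)*}\, dH^{n-1}(x) = \int_{\mathcal{F}E} \|\nu_E(x)\|_{K*}\, dH^{n-1}(x) + z_0 \cdot \int_{\mathcal{F}E} \nu_E(x)\, dH^{n-1}(x).$$
The first term is exactly $P_K(E)$, so the claim reduces to showing that the ``total vector flux of the outer normal'' vanishes:
$$\int_{\mathcal{F}E} \nu_E(x)\, dH^{n-1}(x) = 0.$$

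\textbf{Key step: the vanishing of the normal integral.} This is a standard consequence of the Gauss–Green/divergence theorem for sets of finite perimeter: testing the generalized divergence theorem against the constant vector fields $T \equiv e_i$ (or, more carefully, against $\phi\, e_i$ with $\phi \in C^1_c$ equal to $1$ on a large ball and letting the ball exhaust $\mathbb{R}^n$, which is legitimate because $E$ has finite measure and finite perimeter) gives $\int_{\mathcal{F}E} \nu_E^i \, dH^{n-1} = 0$ for each $i$. Equivalently, $D\mathbf{1}_E(\mathbb{R}^n) = -\int_{\mathcal{F}E}\nu_E \, dH^{n-1}$ and $D\mathbf{1}_E(\mathbb{R}^n) = 0$ because $\mathbf{1}_E \in BV(\mathbb{R}^n)$ has compact-in-measure support with finite total variation; one can also see it from $\int_{\mathbb{R}^n} \operatorname{div}(\phi e_i)\,\mathbf{1}_E\,dx \to 0$. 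Since $|E|<\infty$ and $P(E|\mathbb{R}^n)<\infty$ are part of the hypothesis $E \in \mathcal{D}$ (here $E \subset \mathcal{C}$ so in particular $E$ is of finite perimeter and finite measure in $\mathbb{R}^n$), this argument applies. Combining, $P_{z_0+K}(E) = P_K(E)$, and dividing by $n|K|^{1/n}|E|^{(n-1)/n}$ and subtracting $1$ yields $\delta_{z_0+K}(E) = \delta_K(E)$.

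\textbf{Main obstacle.} There is essentially no deep obstacle here; the only point requiring a little care is justifying $\int_{\mathcal{F}E}\nu_E\,dH^{n-1} = 0$ for a general set of finite perimeter and finite measure, rather than a bounded smooth one — i.e. making sure the constant test field can be used despite not having compact support. This is handled by a routine cutoff-and-exhaustion argument using $|E| < \infty$ and $P(E|\mathbb{R}^n)<\infty$, or simply by quoting the fact that $\mathbf{1}_E \in BV(\mathbb{R}^n;\mathbb{R})$ implies $D\mathbf{1}_E(\mathbb{R}^n) = 0$. I would also note in passing, as the statement's parenthetical remark already flags, that $\|\cdot\|_{(z_0+K)*}$ may take negative values when $z_0 + K \not\ni 0$, but this causes no problem: the computation above is purely algebraic and the cancellation $z_0 \cdot \int \nu_E\,dH^{n-1} = 0$ is exactly what kills the offending sign-indefinite term.
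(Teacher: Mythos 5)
Your proof is correct and follows essentially the same route as the paper: reduce to $P_{z_0+K}(E)=P_K(E)$ via the identity $\|\nu\|_{(z_0+K)*}=\nu\cdot z_0+\|\nu\|_{K*}$, and then kill the extra term by the divergence theorem for sets of finite perimeter applied to the constant field $z_0$. The extra care you take in justifying $\int_{\mathcal{F}E}\nu_E\,dH^{n-1}=0$ for unbounded $E$ of finite measure and perimeter is a welcome refinement of the paper's one-line citation, but not a different argument.
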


\begin{proof}
It suffices to prove $P_{z_0+K}(E)=P_K(E)$. 

\begin{align*}
P_{z_0+K}(E) &= \int_{\mathcal{F} E}  \sup\{\nu_E(x) \cdot z: z \in z_0 + K\}  dH^{n-1}(x)\\
&= \int_{\mathcal{F} E}  \sup\{\nu_E(x) \cdot (z_0+z): z \in K\}  dH^{n-1}(x) \\
&= \int_{\mathcal{F} E} (\nu_E(x) \cdot z_0 + \sup\{\nu_E(x) \cdot z: z \in K\})  dH^{n-1}(x) \\
&= \int_{\mathcal{F} E} \nu_E(x) \cdot z_0 dH^{n-1}(x) + P_K(E).
\end{align*}
By using the divergence theorem for sets of finite perimeter \cite[Equation (3.47)]{AFP}, we obtain 
$$\int_{\mathcal{F} E} \nu_E(x) \cdot z_0 dH^{n-1}(x) = \int_{E} \operatorname{div}(z_0) dx=0,$$ 
which proves the result.  
\end{proof}

\subsection{Isoperimetric inequality inside a convex cone}
\label{sect:isop ineq}

Here we show how to use Gromov's argument to prove the relative isoperimetric inequality for convex cones. As discussed in the introduction, the first general proof of the inequality was due to Lions and Pacella \cite{LP} and is based on the Brunn-Minkowski inequality. The equality cases were considered in \cite{LP} for the special case when $\mathcal{C} \setminus \{0\}$ is smooth. Our proof of the inequality closely follows the proof of \cite[Theorem $2.3$]{Fi} with some minor modifications.
\begin{thm} \label{7} 
Let $\mathcal{C}$ be an open, convex cone and $|E|<\infty$. Then 
\begin{equation} \label{revised}
n|E|^{\frac{n-1}{n}} |K|^{\frac{1}{n}}\leq H^{n-1}(\mathcal{F} E \cap \mathcal{C}). 
\end{equation} \label{7aaa}
Moreover, if $\mathcal{C}$ contains no lines, then equality holds if and only if $E = sK$. 
\end{thm}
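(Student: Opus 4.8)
The plan is to mimic Gromov's proof of the (anisotropic) isoperimetric inequality via the Brenier map, exactly as in \cite[Theorem $2.3$]{Fi}, while keeping track of the boundary flux along $\partial\mathcal C$; the rigidity statement will then need a separate, new argument exploiting that $\mathcal C$ is a pointed cone.

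\textbf{Reductions and the inequality.} Since $\mathcal C$ is a cone we have $\lambda\mathcal C=\mathcal C$ and $H^{n-1}(\mathcal F(\lambda E)\cap\mathcal C)=\lambda^{n-1}H^{n-1}(\mathcal F E\cap\mathcal C)$ for $\lambda>0$, so after the rescaling $E\mapsto(|K|/|E|)^{1/n}E$ it suffices to treat $|E|=|K|$, in which case \eqref{revised} becomes $n|E|\le H^{n-1}(\mathcal F E\cap\mathcal C)$ and $s=1$. I may assume $E$ has finite perimeter (otherwise there is nothing to prove), the technical points needed below being handled as in \cite{Fi}. Let $T=\nabla\varphi$, $\varphi$ convex, be the Brenier map pushing $\mathbf 1_E\,dx$ forward onto $\mathbf 1_K\,dx$; then $\nabla T$ is symmetric positive semidefinite with $\det\nabla T=1$ a.e.\ on $E$, so by the arithmetic--geometric mean inequality $\operatorname{div}T\ge n(\det\nabla T)^{1/n}=n$ a.e.\ on $E$, and $\operatorname{trace}(D_sT)\ge 0$ by \eqref{las1}. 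Feeding this into the divergence theorem \eqref{7b} gives
\[
n|E|\ \le\ \int_E\operatorname{div}T\,dx\ \le\ \operatorname{Div}T(E^{(1)})\ =\ \int_{\mathcal F E}\operatorname{tr}_E(T)(x)\cdot\nu_E(x)\,dH^{n-1}(x).
\]
Because $T$ takes values in $\overline K$ a.e.\ on $E$ and $\overline K$ is convex, $\operatorname{tr}_E(T)(x)\in\overline K$ for $H^{n-1}$-a.e.\ $x\in\mathcal F E$. I would then split $\mathcal F E=(\mathcal F E\cap\mathcal C)\cup(\mathcal F E\setminus\mathcal C)$, noting that $\mathcal F E\setminus\mathcal C\subset\partial\mathcal C$ since $\overline E\subset\overline{\mathcal C}$. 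On $\mathcal F E\cap\mathcal C$, using $\overline K\subset\overline{B_1}$ one gets $\operatorname{tr}_E(T)(x)\cdot\nu_E(x)\le|\operatorname{tr}_E(T)(x)|\le 1$. On $\mathcal F E\cap\partial\mathcal C$, for $H^{n-1}$-a.e.\ such $x$ a blow-up comparison (using $E\subset\mathcal C$) forces $\nu_E(x)$ to be the outer unit normal $\mathfrak n_x$ of $\mathcal C$ at $x$, a smooth boundary point; since $\overline K\subset\overline{\mathcal C}\subset\{y:\langle\mathfrak n_x,y\rangle\le 0\}$ (the supporting halfspace contains the vertex $0$ because $\mathcal C$ is a cone), one gets $\operatorname{tr}_E(T)(x)\cdot\nu_E(x)\le 0$. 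Hence the right-hand side is $\le H^{n-1}(\mathcal F E\cap\mathcal C)$, which is \eqref{revised}; rescaling yields the general inequality.

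\textbf{The equality case.} Assume $\mathcal C$ contains no lines, $|E|=|K|$, and equality holds; then each inequality above is an equality. From $\operatorname{div}T=n$, $\det\nabla T=1$ and $\nabla T$ symmetric positive semidefinite I get $\nabla T=\mathrm{Id}$ a.e.\ on $E$, while the restriction of $D_sT$ to $E^{(1)}$ has vanishing trace and, being symmetric positive semidefinite, vanishes there; as in \cite[Theorem $2.3$]{Fi} this yields $T(x)=x+v$ a.e.\ on $E$ for some fixed $v\in\mathbb R^n$, hence $|E\,\Delta\,(K-v)|=0$, and I may take $E=K-v$. Equality on $\mathcal F E$ then says, for $H^{n-1}$-a.e.\ $x\in\mathcal F E$ (where $\operatorname{tr}_E(T)(x)=x+v$ since $T$ agrees a.e.\ on $E$ with the continuous map $y\mapsto y+v$): if $x\in\mathcal C$ then $x+v=\nu_E(x)$ and $|x+v|=1$, while if $x\in\partial\mathcal C$ then $(x+v)\cdot\nu_E(x)=0$. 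Writing $x=y-v$ with $y\in\partial K$ and $\nu_E(x)=\nu_K(y)$, and recalling that $\partial K$ is the union of the spherical part $\partial B_1\cap\overline{\mathcal C}$ (where $\nu_K(y)=y$, so $|y|=1$ and $\langle y,\nu_K(y)\rangle=1$) and the flat part $\Gamma:=B_1\cap\partial\mathcal C$ (where $\langle y,\nu_K(y)\rangle=0$), these conditions say: $y-v\in\mathcal C$ forces $y$ to be spherical, and $y-v\in\partial\mathcal C$ forces $y\in\Gamma$. Since $\Gamma-v\subset\overline E\subset\overline{\mathcal C}$, it follows that $(\Gamma-v)\cap\mathcal C$ is $H^{n-1}$-null; but $(\Gamma-v)\cap\mathcal C$ is relatively open in $\partial\mathcal C-v$, and any nonempty relatively open subset of $\partial\mathcal C$ has positive $H^{n-1}$-measure, so $(\Gamma-v)\cap\mathcal C=\emptyset$ and thus $\Gamma-v\subset\partial\mathcal C$. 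The final step is to deduce $v=0$ from this together with ``no lines'': since $0\in\overline\Gamma$ and $\partial\mathcal C$ is closed, $-v\in\partial\mathcal C$; for $p\in\overline\Gamma$ the segment $[0,p]$ lies in $\overline\Gamma$, so $[-v,p-v]\subset\partial\mathcal C$; a segment contained in the boundary of the convex cone $\mathcal C$ must lie in a supporting hyperplane $H$, which contains the vertex $0$, so $0,-v,p-v\in H$, and as $H$ is a linear hyperplane this gives $v\in H$ and $p\in H$; hence $H$ supports $\mathcal C$ at $p$, and taking $p$ a smooth boundary point gives $H=\mathfrak n_p^\perp$, i.e.\ $\langle\mathfrak n_p,v\rangle=0$. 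As this holds for every smooth $p\in\partial\mathcal C$, if $v\ne 0$ then $\langle\mathfrak n_p,\pm v\rangle\le 0$ for all such $p$, so $\pm v\in\overline{\mathcal C}$ (the intersection of the supporting halfspaces at smooth points), whence $\overline{\mathcal C}$ contains the line $\mathbb R v$ --- a contradiction. Therefore $v=0$, $E=K$, and after rescaling $E=sK$; the converse implication is the direct check $H^{n-1}(\mathcal F(sK)\cap\mathcal C)=s^{n-1}H^{n-1}(\partial B_1\cap\mathcal C)=n|sK|^{(n-1)/n}|K|^{1/n}$.

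\textbf{Where the difficulty lies.} The inequality is Gromov's argument from \cite{Fi} plus the elementary remark that the flux across $\mathcal F E\cap\partial\mathcal C$ is nonpositive, so the real work is the rigidity. The subtlety is that $\delta_K(E)=0$ by itself does \emph{not} imply $v=0$: a translate of $K$ with its flat faces pushed into the interior of $\mathcal C$ still has zero anisotropic deficit but strictly positive relative deficit. One must therefore extract from equality in \eqref{revised} the sharper information that the flat part of $\partial(K-v)$ must lie on $\partial\mathcal C$ --- and it is here, in passing from the $H^{n-1}$-a.e.\ identities to an honest set inclusion, that care is needed --- and then convert this into $v=0$ using that $\mathcal C$ is a genuinely pointed cone.
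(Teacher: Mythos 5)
Your proposal is correct. The inequality is proved exactly as in the paper (Gromov's argument with the Brenier map, AM--GM, the BV divergence theorem, and the observation that the flux across $\mathcal F E\cap\partial\mathcal C$ is nonpositive because $\overline K$ lies in the supporting halfspace of $\mathcal C$ at a.e.\ boundary point --- the paper phrases this as $\|\nu_{\mathcal C}\|_{K*}=0$, you phrase it via the blow-up of $\nu_E$, same content). In the equality case both arguments rest on the same nontrivial black box, namely the rigidity of the Brenier/Wulff argument giving $E=K+a$ up to null sets; note that the relevant reference is \cite[Theorem A.1]{Fi} (equivalently Fonseca--M\"uller \cite{FM}), not Theorem 2.3 --- the passage from $DT\llcorner E^{(1)}=\mathrm{Id}\,dx\llcorner E$ to ``$T$ is a translation'' is precisely the delicate step those results handle, and you should not present it as immediate. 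After that reduction the two proofs genuinely diverge. The paper uses the identity $P(a+K|\mathcal C)=P(K|\mathcal C)+H^{n-1}(S_a)$ for $a\in\overline{\mathcal C}$ to conclude $\langle\nu_{\mathcal C},a\rangle=0$ $H^{n-1}$-a.e.\ on $\mathcal F\mathcal C$, and then observes that $D\mathbf 1_{\mathcal C}$ vanishes in the direction $a$, forcing a line unless $a=0$; this is short but leans on an identity stated without proof. You instead extract from the pointwise equality conditions on $\mathcal F E$ that the conical part $\Gamma-v$ of $\partial(K-v)$ cannot meet the open cone, upgrade the $H^{n-1}$-a.e.\ statement to a genuine inclusion $\Gamma-v\subset\partial\mathcal C$ by a relative-openness argument, and then run an elementary convex-geometry argument (segments in $\partial\mathcal C$ lie in linear supporting hyperplanes, hence $\langle\mathfrak n_p,v\rangle=0$ at every smooth $p$, hence $\pm v\in\overline{\mathcal C}$) to contradict pointedness. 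Both routes funnel into the same final mechanism --- the translation is orthogonal to a.e.\ normal of $\partial\mathcal C$, which is incompatible with $\mathcal C$ containing no lines --- but yours buys independence from the unproved perimeter identity at the price of a longer geometric argument, and your closing remark correctly identifies why $\delta_K(E)=0$ alone is insufficient here.
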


\begin{proof} \

\noindent \textbf{Proof of (\ref{revised}).} By rescaling, if necessary, we may assume that $|K| = |E|$ (i.e. $s=1$). Define the probability densities $d\mu^{+}(x)= \frac{1}{|E|}\mathbf{1}_{E}(x) dx$ and $d\mu_{-}(y) = \frac{1}{|K|} \mathbf{1}_K (y) dy$. By classical results in optimal transport theory, it is well known that there exists an a.e. unique map $T : E \rightarrow K$ (which we call the Brenier map) such that $T=\nabla \phi$ where $\phi$ is convex, $T \in BV(\mathbb{R}^n; K),$ and $\det(\nabla T(x))=1$ for a.e. $x \in E$ (see e.g. \cite{Br, McC1, McC2, AA}). Moreover, since $T$ is the gradient of a convex function with positive Jacobian, $\nabla T(x)$ is symmetric and nonnegative definite; hence, its eigenvalues $\lambda_k(x)$ are nonnegative for a.e. $x \in \mathbb{R}^n$. As a result, we may apply the arithmetic-geometric mean inequality to conclude that for a.e. $x \in E$, 
\begin{equation} \label{7a}
n=n\bigl(\det \nabla T(x) \bigr)^{\frac{1}{n}} = n \biggl(\prod_{k=1}^n \lambda_k(x)\biggr)^{\frac{1}{n}} \leq \displaystyle \sum_{k=1}^n \lambda_k(x) = \operatorname{div}T(x).
\end{equation}
Therefore, 
\begin{align}
n |E|^{\frac{n-1}{n}} |K|^{\frac{1}{n}} &= n|E| = n \int_{E} \det(\nabla T(x))^{\frac{1}{n}} dx\notag \\
&\leq \int_{E} \operatorname{div}T(x) dx = \int_{E^{(1)}} \operatorname{div}T(x) dx, \label{7c bis}
\end{align}
where we recall that $E^{(1)}$ denotes the set of points with density $1$ (see Section 2.1).
Next, we use (\ref{las1}) and (\ref{7b}):
\begin{align}
\int_{E^{(1)}} \operatorname{div}T(x) dx &\leq \int_{E^{(1)}} \operatorname{div}T(x) dx + (\operatorname{Div} T)_s(E^{(1)}) \notag \\
&=\operatorname{Div} T (E^{(1)})= \int_{\mathcal{F} E} \operatorname{tr}_E(T)(x) \cdot \nu_E(x) dH^{n-1}(x). \label{7c} 
\end{align} 
By the convexity of $K$ and the fact that $T(x) \in K$ for a.e. $x \in E$, it follows that $\operatorname{tr}_E(T)(x) \in \bar{K}$, so by the definition of $||\cdot||_{K^*}$, $$\operatorname{tr}_E(T)(x) \cdot \nu_E(x) \leq ||\nu_E(x)||_{K*}.$$ Hence,    
\begin{align}
\int_{\mathcal{F} E} \operatorname{tr}_E(T)(x) \cdot \nu_E(x) dH^{n-1}(x) \leq \int_{\mathcal{F} E} ||\nu_E(x)||_{K*}  dH^{n-1}(x) = P_K(E) \label{las2}.   
\end{align}
Furthermore, note that if $z\in K$, then $|z| \leq 1$; therefore, $$||\nu_E(x)||_{K*}= \sup\{\nu_E(x) \cdot z: z\in K\} \leq 1.$$ Moreover, observe that by the definition of $||\cdot||_{K*}$, it follows easily that $||\nu_{\mathcal{C}}(x)||_{K*}=0$ for $H^{n-1}$ -a.e. $x \in \partial \mathcal{C} \setminus \{0\}$; therefore, $||\nu_E(x)||_{K*}=0$ for $H^{n-1}$ -a.e. $x \in \mathcal{F}E \cap \partial \mathcal{C}$. Thus,    
\begin{align*}
\int_{\mathcal{F} E} ||\nu_E(x)||_{K*}  dH^{n-1}(x)&=\int_{\mathcal{F} E \cap \mathcal{C}} ||\nu_E(x)||_{K*}  dH^{n-1}(x) \leq H^{n-1}(\mathcal{F} E \cap \mathcal{C}), 
\end{align*} 
and this proves the inequality. 

\noindent \textbf{Equality case.} If $E=K$, then $T(x)=x$ and it is easy to check that equality holds in each of the inequalities above. Conversely, suppose there is equality. In particular, $n|K|=P_K(E)$. By \cite{FM} (see also \cite[Theorem A.1]{Fi}), we obtain that $E=K+a$ with $a \in \mathcal{\bar{C}}$. Next, we will use the following identity which is valid for any $v \in \bar{\mathcal{C}}$: 
\begin{align*}
P(v+K|\mathcal{C})=P(K|\mathcal{C})+H^{n-1}(S_v),
\end{align*}
where
\begin{align*} 
S_v:&=\{x\in \mathcal{F}\mathcal{C} \cap B_1: \langle \nu_{\mathcal{C}}(x), v\rangle \neq 0\}\\
&=\{x\in \mathcal{F}\mathcal{C} \cap B_1: \langle \nu_{\mathcal{C}}(x), v\rangle < 0\}.
\end{align*}
By the previous equality case, we know $n|K|=P(K|\mathcal{C})$, therefore, 
\begin{align*} 
P(K|\mathcal{C})=P(E|\mathcal{C})=P(a+K|\mathcal{C})=P(K|\mathcal{C})+H^{n-1}(S_a),
\end{align*}
and we obtain $H^{n-1}(S_a)=0$. This implies $\langle \nu_{\mathcal{C}}(x),a\rangle = 0$ for $H^{n-1}$-almost every $x \in \mathcal{F} \mathcal{C}$. Hence, $D \hskip .02 in \mathbf{1}_{\mathcal{C}} =0$ in the direction defined by $a$, which gives $\mathbf{1}_{\mathcal{C}}(x)=\mathbf{1}_{\mathcal{C}}(x+ta)$ for all $t\in \mathbb{R}$. However, by assumption, $\mathcal{C}$ contains no lines; thus, $a=0$ and we conclude the proof.  
\end{proof}    

\begin{cor} \label{8} 
If $E \in \mathcal{D}$, then $\delta_K(E) \leq \mu(E)$.
\end{cor}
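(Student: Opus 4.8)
The plan is to unwind the definitions of $\mu(E)$ and $\delta_K(E)$ and show that the relative perimeter dominates the anisotropic perimeter $P_K(E)$, while the normalizing constants agree. Concretely, both quantities have the same denominator: $n|K|^{1/n}|E|^{(n-1)/n} = n|B_1\cap\mathcal{C}|^{1/n}|E|^{(n-1)/n}$ since $K = B_1\cap\mathcal{C}$. So it suffices to prove the numerator inequality $P_K(E) \leq P(E\,|\,\mathcal{C}) = H^{n-1}(\mathcal{F}E\cap\mathcal{C})$.

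For this, I would recall the chain of estimates already established in the proof of Theorem \ref{7}. There it is shown that for $H^{n-1}$-a.e.\ $x\in\mathcal{F}E$ one has $\|\nu_E(x)\|_{K*} \le 1$ (because every $z\in K\subset B_1$ satisfies $|z|\le 1$), and moreover $\|\nu_E(x)\|_{K*} = 0$ for $H^{n-1}$-a.e.\ $x\in\mathcal{F}E\cap\partial\mathcal{C}$ (because $\|\nu_{\mathcal{C}}(x)\|_{K*}=0$ on $\mathcal{F}\mathcal{C}$, and on $\mathcal{F}E\cap\partial\mathcal{C}$ the outer normal to $E$ agrees with that of $\mathcal{C}$). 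Combining these,
\begin{align*}
P_K(E) = \int_{\mathcal{F}E} \|\nu_E(x)\|_{K*}\,dH^{n-1}(x) = \int_{\mathcal{F}E\cap\mathcal{C}} \|\nu_E(x)\|_{K*}\,dH^{n-1}(x) \le H^{n-1}(\mathcal{F}E\cap\mathcal{C}) = P(E\,|\,\mathcal{C}).
\end{align*}
Dividing by the common normalizing constant $n|K|^{1/n}|E|^{(n-1)/n}$ and subtracting $1$ gives $\delta_K(E)\le\mu(E)$.

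There is essentially no obstacle here: the entire content was already extracted inside the proof of Theorem \ref{7}, so this corollary is a bookkeeping consequence. The only point worth stating carefully is that $E\in\mathcal{D}$ guarantees $P(E\,|\,\mathcal{C})<\infty$, hence $E$ has finite perimeter in $\mathbb{R}^n$ in a neighborhood of every point of $\mathcal{C}$, so that $P_K(E)$ and the reduced-boundary representation $P(E\,|\,\mathcal{C}) = H^{n-1}(\mathcal{F}E\cap\mathcal{C})$ are both meaningful; if $P(E\,|\,\mathcal{C})=\infty$ the inequality is trivial anyway. (Strictly speaking, to write $P_K(E)$ as an integral over all of $\mathcal{F}E$ one uses that the contribution from $\mathcal{F}E\cap\partial\mathcal{C}$ vanishes, which is precisely the computation recalled above, so no global finite-perimeter assumption on $E$ in $\mathbb{R}^n$ is needed.) Thus the proof is just: same denominators, and $P_K(E)\le P(E\,|\,\mathcal{C})$ by the pointwise bound $\|\nu_E\|_{K*}\le\mathbf{1}_{\mathcal{C}}$ on $\mathcal{F}E$.
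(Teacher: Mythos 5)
Your proof is correct and follows essentially the same route as the paper: both reduce the corollary to the inequality $P_K(E)\le H^{n-1}(\mathcal{F}E\cap\mathcal{C})$ already established in the proof of Theorem \ref{7} via the pointwise bounds $\|\nu_E\|_{K*}\le 1$ and $\|\nu_E\|_{K*}=0$ on $\mathcal{F}E\cap\partial\mathcal{C}$. The only cosmetic difference is that the paper first rescales to $|E|=|K|$ and invokes the representation \eqref{fin20} of $\mu$, whereas you compare the two deficits directly over their common denominator.
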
 

\begin{proof}
Since the inequality is scaling invariant, we may assume that $|E|=|K|$. From (\ref{las2}) and the fact that $n|K|=H^{n-1}(\partial B_1 \cap \mathcal{C})$ we obtain $$P_K(E) - n|K| \leq H^{n-1}(\mathcal{F} E \cap C) - n|K| = H^{n-1}(\mathcal{F} E \cap \mathcal{C}) - H^{n-1}(\partial B_1 \cap \mathcal{C}).$$ Dividing by $n|K|$ and using the representation of $\mu(E)$ given by (\ref{fin20}) yields the result.  
\end{proof}      

\begin{cor} \label{13} 
Let $E \in \mathcal{D}$ with $|E|=|K|$, and let $T_0: E \rightarrow K_0$ be the Brenier map from $E$ to $K_0$. Then $$\int_{\mathcal{F} E \cap \mathcal{C}} \bigl (1-|\operatorname{tr}_E(T_0-x_0)(x)| \bigr ) dH^{n-1}(x) \leq n|K| \mu(E).$$
\end{cor}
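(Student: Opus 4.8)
The plan is to revisit the chain of inequalities in the proof of Theorem \ref{7}, but now applied with the Brenier map $T_0 \colon E \to K_0$ onto the translated convex body $K_0 = K + x_0$ rather than onto $K$ itself, and to keep track of the slack in the arithmetic-geometric mean step. Concretely, since $T_0 = \nabla \phi_0$ with $\phi_0$ convex and $\det \nabla T_0 = 1$ a.e. on $E$, the eigenvalues $\lambda_k(x) \geq 0$ of the symmetric matrix $\nabla T_0(x)$ satisfy $\prod_k \lambda_k = 1$, so $n = n(\prod_k \lambda_k)^{1/n} \leq \sum_k \lambda_k = \operatorname{div} T_0(x)$. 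Integrating over $E$ and using $|E| = |K| = |K_0|$ gives $n|K| \leq \int_E \operatorname{div} T_0\,dx$; then exactly as in \eqref{7c bis}--\eqref{7c}, using \eqref{las1} and the divergence theorem \eqref{7b}, $\int_E \operatorname{div} T_0\,dx \leq \operatorname{Div} T_0(E^{(1)}) = \int_{\mathcal{F}E} \operatorname{tr}_E(T_0)(x)\cdot \nu_E(x)\,dH^{n-1}$.

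The next point is to bound this boundary integral. Since $T_0(x) \in \bar K_0 = \bar K + x_0$ for a.e. $x \in E$, we have $\operatorname{tr}_E(T_0)(x) - x_0 \in \bar K$, hence $\operatorname{tr}_E(T_0 - x_0)(x) \cdot \nu_E(x) \leq \|\nu_E(x)\|_{K*} \leq 1$, and moreover $\|\nu_E\|_{K*} = 0$ for $H^{n-1}$-a.e. $x \in \mathcal{F}E \cap \partial\mathcal{C}$ (as shown in the proof of Theorem \ref{7}). Using $\int_{\mathcal{F}E}\nu_E\cdot x_0\,dH^{n-1} = 0$ (divergence theorem for the constant field $x_0$, as in Lemma \ref{6}), we get $\int_{\mathcal{F}E}\operatorname{tr}_E(T_0)\cdot\nu_E\,dH^{n-1} = \int_{\mathcal{F}E}\operatorname{tr}_E(T_0 - x_0)\cdot\nu_E\,dH^{n-1} \leq \int_{\mathcal{F}E\cap\mathcal{C}} |\operatorname{tr}_E(T_0 - x_0)(x)|\, dH^{n-1}(x)$, where I also used Cauchy-Schwarz with $|\nu_E| = 1$. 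Combining the two displays yields
\begin{equation*}
n|K| \leq \int_{\mathcal{F}E\cap\mathcal{C}} |\operatorname{tr}_E(T_0 - x_0)(x)|\,dH^{n-1}(x).
\end{equation*}

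Finally, I rewrite this as $\int_{\mathcal{F}E\cap\mathcal{C}} \bigl(1 - |\operatorname{tr}_E(T_0-x_0)(x)|\bigr) dH^{n-1}(x) \leq H^{n-1}(\mathcal{F}E\cap\mathcal{C}) - n|K|$, and since $n|K| = H^{n-1}(\partial B_1 \cap \mathcal{C}) = H^{n-1}(\partial B_s\cap\mathcal{C})$ when $s = 1$, the right-hand side equals $H^{n-1}(\mathcal{F}E\cap\mathcal{C}) - H^{n-1}(\partial B_1\cap\mathcal{C}) = n|K|\,\mu(E)$ by the representation \eqref{fin20} of the relative deficit. This gives the claimed estimate. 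The only genuinely delicate point is making sure the integrand $1 - |\operatorname{tr}_E(T_0 - x_0)|$ is handled on the right domain, i.e. that the contribution of $\mathcal{F}E \cap \partial\mathcal{C}$ is correctly discarded; this follows from $\|\nu_{\mathcal{C}}\|_{K*} = 0$ on $\partial\mathcal{C}\setminus\{0\}$ exactly as in the proof of Theorem \ref{7}, so no new idea is needed beyond carefully carrying the $-x_0$ shift through every step.
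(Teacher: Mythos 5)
Your proof is correct and follows essentially the same route as the paper: Gromov's chain of inequalities for the Brenier map, the observation that the trace lies in $\bar K$ so that the boundary contribution on $\mathcal{F}E\cap\partial\mathcal{C}$ is nonpositive, and the representation \eqref{fin20} of $\mu(E)$. The only (immaterial) difference is that you carry the translation by $x_0$ through the boundary integral via $\int_{\mathcal{F}E}\nu_E\cdot x_0\,dH^{n-1}=0$, whereas the paper notes at the outset that $T_0-x_0$ is precisely the Brenier map from $E$ to $K$ and quotes \eqref{7c bis}--\eqref{7c} directly.
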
 

\begin{proof} Let $T:E\rightarrow K$ be the Brenier map from $E$ to $K$ so that $T$ is the a.e. unique gradient of a convex function $\phi$. Then $T_0(x)=T(x)+x_0$ (this follows easily from the fact that $T(x)+x_0=\nabla \phi(x)+x_0=\nabla (\phi(x)+x_0 \cdot x)$ and $\phi(x)+x_0 \cdot x$ is still convex). Therefore, by \eqref{7c bis} and $(\ref{7c})$,
\begin{equation} \label{13a}
n|E| \leq \int_{\mathcal{F} E} \operatorname{tr}_E(T_0-x_0)(x) \cdot \nu_E(x) dH^{n-1}(x) .
\end{equation}
Next, we recall from the proof of Theorem \ref{7} that $\operatorname{tr}_E(T_0-x_0)(x) \in \bar{K}$. Hence, $\operatorname{tr}_E(T_0-x_0)(x) \cdot \nu_E(x) \leq 0$ for $H^{n-1}$ a.e. $x \in \mathcal{F} E \cap \partial \mathcal{C}$ and $|\operatorname{tr}_E(T_0-x_0)(x)|\leq1$ for $H^{n-1}$ a.e. $x \in \mathcal{F} E \cap \mathcal{C}$. Therefore, using (\ref{13a}),  
\begin{align*}
n|E| &\leq \int_{\mathcal{F} E \cap \mathcal{C}} \operatorname{tr}_E(T_0-x_0)(x) \cdot \nu_E(x) dH^{n-1}(x) \\
&\leq \int_{\mathcal{F} E \cap \mathcal{C}} |\operatorname{tr}_E(T_0-x_0)(x)| dH^{n-1}(x) \leq H^{n-1}(\mathcal{F} E \cap \mathcal{C}).
\end{align*}
\noindent The fact that $n|E|=n|K|=H^{n-1}(\partial B_1 \cap \mathcal{C})$ finishes the proof.
\end{proof}

\section{Proof of Theorem \ref{t1}}
We split the proof in several steps. In Section $3.1$, we collect some useful technical tools. Then in Section $3.2$, we prove Theorem \ref{t1} under the additional assumption that $E$ is close to $K$ (up to a translation in the first $k$ coordinates). Finally, we remove this assumption in Section $3.3$ to conclude the proof of the theorem.

Let $\{e_k\}_{k=1}^n$ be the standard orthonormal basis for $\mathbb{R}^n$. Recall that $\mathcal{C}=\mathbb{R}^k \times \mathcal{\tilde{C}}$, where $\tilde{C} \subset \mathbb{R}^{n-k}$ is an open, convex cone containing no lines. Hence, up to a change of coordinates, we may assume without loss generality that $\partial \mathcal{\tilde{C}} \cap \{x_n=0\}=\{0\}.$ With this in mind and a simple compactness argument, we note that 

\begin{equation} \label{19z}
b=b(n,K):= \inf \{t>0: \partial \tilde{B}_{\frac{1}{2}}(0) \cap \mathcal{\tilde{C}} \cap \{x_n<t\} \neq \emptyset \}>0,   
\end{equation}
where $\tilde{B}_{\frac{1}{2}}(0)$ is the ball in $\mathbb{R}^{n-k}$.   

Next, we introduce the trace constant of a set of finite perimeter. Recall the definition of $K_0$ given in Section $2.1$, so that (\ref{10}) and (\ref{10a}) hold. Given a set $E \in \mathcal{D}$, let $\tau(E)$ denote the trace constant of $E$, where 

\begin{equation} \label{14}
\tau(E):=\inf \biggl \{\frac{P_{K_0}(F)}{\int_{\mathcal{F} F \cap \mathcal{F} E} ||\nu_E||_{K_0*} dH^{n-1}}: F\subset E, |F| \leq \frac{|E|}{2} \biggr \}.
\end{equation}

\noindent Note that $\tau$ is scaling invariant, and in general $\tau(E) \geq 1$. The trace constant contains valuable information about the geometry of $E$. For example, if $E$ has multiple connected components or outward cusps, then $\tau(E)=1$. In general, sets for which $\tau(E)>1$ enjoy a nontrivial Sobolev-Poincar{\'e} type inequality (see \cite[Lemma 3.1]{Fi}).    
\subsection{Main tools}
In what follows, we list all the technical tools needed in order to prove Theorem \ref{t1}. We decided to move some of the proofs to the appendix in order to make this section more accessible. The following two lemmas are general facts about sets of finite perimeter.

\begin{lem} \label{17} Let $A \subset \mathbb{R}^n$ be a bounded set
of finite perimeter.
Then there exists $C_{\ref{17}}(n,A)>0$ such that for any $y \in \mathbb{R}^{n}$,
$|(y + A) \Delta A| \leq C_{\ref{17}}(n,A) |y|$.
\end{lem}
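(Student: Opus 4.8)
The plan is to prove the estimate $|(y+A)\Delta A|\le C(n,A)|y|$ for a bounded set of finite perimeter $A$, using the fact that translation is continuous in $L^1$ with a modulus controlled by the total variation of $\mathbf 1_A$. First I would recall the standard identity relating the $L^1$-modulus of continuity of translations of a $BV$ function to its total variation: for $u\in BV(\mathbb R^n)$ and any $y\in\mathbb R^n$,
\begin{equation*}
\int_{\mathbb R^n}|u(x+y)-u(x)|\,dx \le |Du|(\mathbb R^n)\,|y|.
\end{equation*}
Applying this with $u=\mathbf 1_A$ gives immediately
\begin{equation*}
|(y+A)\Delta A| = \int_{\mathbb R^n}|\mathbf 1_A(x-y)-\mathbf 1_A(x)|\,dx \le |D\mathbf 1_A|(\mathbb R^n)\,|y| = P(A|\mathbb R^n)\,|y|,
\end{equation*}
so one may simply take $C_{\ref{17}}(n,A):=P(A|\mathbb R^n)$, which is finite since $A$ has finite perimeter. (The boundedness of $A$ is not strictly needed for this argument, only finiteness of the perimeter, but it is a convenient hypothesis and guarantees $P(A|\mathbb R^n)<\infty$ in the relevant applications.)

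For completeness I would include a short proof of the translation inequality itself, since the paper is written in a largely self-contained style. The cleanest route is by approximation: if $u\in C_c^\infty(\mathbb R^n)$, then $u(x+y)-u(x)=\int_0^1 \nabla u(x+ty)\cdot y\,dt$, and integrating in $x$, using Fubini and the translation-invariance of Lebesgue measure, yields $\int|u(x+y)-u(x)|\,dx\le |y|\int|\nabla u|\,dx = |y|\,|Du|(\mathbb R^n)$. For general $u\in BV$ one takes a sequence $u_j\in C_c^\infty$ with $u_j\to u$ in $L^1$ and $|Du_j|(\mathbb R^n)\to |Du|(\mathbb R^n)$ (standard mollification/strict approximation in $BV$), and passes to the limit in the inequality, using that $u_j(\cdot+y)\to u(\cdot+y)$ in $L^1$. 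Alternatively, one can invoke this as a well-known fact, citing \cite{AFP}.

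There is essentially no serious obstacle here: this is a classical and elementary fact about $BV$ functions, and the only mild point of care is making sure the approximation argument is stated for the indicator $\mathbf 1_A$, whose smooth approximants must be taken to converge strictly in $BV$ so that the total variations converge to $P(A|\mathbb R^n)$. I would keep the write-up to a few lines, either citing \cite{AFP} directly for the translation estimate or giving the two-line smooth computation followed by the approximation remark, and then conclude by reading off the constant $C_{\ref{17}}(n,A)=P(A|\mathbb R^n)$.
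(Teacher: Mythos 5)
Your proof is correct and takes essentially the same approach as the paper, which simply invokes \cite[Remark 3.25]{AFP} (the $L^1$ translation estimate for $BV$ functions) applied to $u=\mathbf 1_A$. You supply the same estimate together with the standard mollification proof and the explicit constant $C_{\ref{17}}(n,A)=P(A|\mathbb R^n)$, which is fine.
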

\begin{lem} \label{18}
Let $A \subset \mathbb{R}^n$ be a bounded set
of finite perimeter.
Then there exist two constants $C_{\ref{18}}(n,A), c_{\ref{18}}(n,A)>0$ such that if $y \in \mathbb{R}^n$, then $$\min\{c_{\ref{18}}(n,A), C_{\ref{18}}(n,A)|y|\} \leq |(y + A) \Delta A|.$$
\end{lem}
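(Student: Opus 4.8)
\textbf{Plan of proof for Lemma \ref{18}.}

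The statement claims a lower bound on $|(y+A)\Delta A|$ of the form $\min\{c,C|y|\}$: for small translations the symmetric difference grows at least linearly in $|y|$, while for large translations it is bounded below by a fixed constant. The plan is to treat these two regimes separately.

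First I would dispose of the large-$|y|$ regime. Since $A$ is bounded, say $A\subset B_R$, if $|y|\geq 2R$ then $(y+A)\cap A=\emptyset$, so $|(y+A)\Delta A|=2|A|$; more generally, if $|y|$ exceeds the diameter of $A$ the two translates are disjoint. Thus for $|y|$ larger than some $R_0=R_0(A)$ we get $|(y+A)\Delta A|\geq 2|A|=:2c_0>0$. Choosing $c_{\ref{18}}=2c_0$ handles all $|y|\geq R_0$, and it remains to produce a linear lower bound for $|y|\le R_0$.

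For the small-$|y|$ regime, the key idea is to use the perimeter. Fix a unit vector $e$ and look at the translation in direction $e$; by a density/slicing argument (Fubini applied to the one-dimensional sections of $A$ in the direction $e$, together with the fact that $\mathbf 1_A$ has bounded variation on almost every line), one has the classical identity/inequality
\begin{equation*}
|(te+A)\Delta A| \;=\; \int_{e^\perp} \big|\mathbf 1_A(\cdot) - \mathbf 1_A(\cdot - te)\big|\,;
\end{equation*}
more usefully, $|(y+A)\Delta A|\le |y|\,P(A)$ gives the upper bound of Lemma \ref{17}, but for the lower bound one argues that $\frac{1}{|y|}|(y+A)\Delta A|$ is bounded below for $|y|$ small. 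Concretely, I would argue by contradiction and compactness: suppose no such $C_{\ref{18}}$ exists, so there is a sequence $y_j\to 0$ (we may assume $y_j\ne 0$, and after passing to a subsequence $y_j/|y_j|\to e\in\mathbb S^{n-1}$) with $|(y_j+A)\Delta A|/|y_j|\to 0$. Writing $u=\mathbf 1_A\in BV(\mathbb R^n)$, the quantity $|(y_j+A)\Delta A| = \|u(\cdot+y_j)-u\|_{L^1}$, and it is a standard fact that $\|u(\cdot+y)-u\|_{L^1}\ge |Du\cdot e|\,|y| + o(|y|)$ is false in general, so instead one uses: $\|u(\cdot+y_j)-u\|_{L^1}/|y_j| \to |D_e u|(\mathbb R^n)$ is not quite right either without care. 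The clean route is: for $h\in\mathbb R$, $\|u(\cdot+he)-u\|_{L^1(\mathbb R^n)} \le |h|\,|D_e u|(\mathbb R^n)$, and conversely $\liminf_{h\to 0}\|u(\cdot+he)-u\|_{L^1}/|h| \ge |D_e u|(\mathbb R^n)$. Since $A$ has positive finite perimeter, $Du\ne 0$, so there is at least one direction $e_0$ with $|D_{e_0}u|(\mathbb R^n)>0$; but we need a lower bound uniform over \emph{all} directions $e$. Here I would use that $\sum_{i=1}^n |D_{e_i}u|(\mathbb R^n) \ge c\,P(A)>0$ fails to be direction-uniform only if $Du$ concentrates on a hyperplane — but even a measure supported on a single normal direction $\nu$ has $|D_e u|(\mathbb R^n) = |Du|(\mathbb R^n)\,|e\cdot\nu|$, which vanishes for $e\perp\nu$. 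So a naive "every direction" bound is genuinely false, and the resolution must be that translating $A$ in \emph{any} direction still moves mass: indeed $|(y+A)\Delta A|>0$ for every $y\ne 0$ because $A$ has finite measure (if $|(y+A)\Delta A|=0$ then $\mathbf 1_A$ is $y$-periodic a.e., contradicting $0<|A|<\infty$). Combined with lower semicontinuity, a compactness argument on the compact set $\{y: \varepsilon \le |y|\le R_0\}$ gives a uniform positive lower bound there, and for $|y|\le\varepsilon$ one uses the $\liminf$ estimate along the finitely many... — the cleanest is: the function $y\mapsto |(y+A)\Delta A|$ is continuous, positive off $0$, and $1$-homogeneous-ish only infinitesimally; so I would prove $\inf_{|y|=1}\liminf_{t\to 0^+} |(ty+A)\Delta A|/t >0$ via the periodicity obstruction plus Fatou, and patch with the compactness bound on the annulus. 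The main obstacle is precisely making this lower bound \emph{uniform in the direction} $y/|y|$ for small $|y|$: the per-direction BV estimate degenerates, so one must exploit that $A$ has finite, nonzero volume (the periodicity argument) rather than just finite perimeter, and combine it with a compactness/continuity argument to extract the constant $C_{\ref{18}}$.
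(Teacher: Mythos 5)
Your overall strategy --- a linear-in-$|y|$ lower bound for small translations, and disjointness plus compactness plus the ``no nontrivial translation invariance'' obstruction for large ones --- is exactly the paper's, and your treatment of the large-$|y|$ regime is complete and correct. The gap is in the small-$|y|$ regime, which you leave unfinished: you correctly identify the difficulty (the per-direction estimate $\liminf_{t\to 0}\|u(\cdot+te)-u\|_{L^1}/t \geq |D_e u|(\mathbb{R}^n)$ must be made uniform in $e$, and one must pass from a liminf along fixed rays to a liminf over arbitrary $y\to 0$), but you end by declaring this ``the main obstacle'' rather than resolving it. Two of your intermediate remarks also point the wrong way: the worry that $|D_e\mathbf{1}_A|(\mathbb{R}^n)$ might vanish for some direction $e$ is empty here, because $|D_e\mathbf{1}_A|(\mathbb{R}^n)=0$ forces $A=A+te$ for every $t$, which is impossible for a bounded set of positive finite measure --- this is the very periodicity obstruction you invoke later, and it already makes the ``every direction'' bound true, not false; and the quantity $\inf_{|e|=1}\liminf_{t\to 0^+}f(te)/t>0$ that you propose to prove does not by itself yield $\liminf_{y\to 0}f(y)/|y|>0$, since $y$ may approach $0$ with varying direction.

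The paper closes this step as follows. Given an arbitrary sequence $y_k\to 0$, pass to a subsequence with $y_k/|y_k|\to w\in\mathbb{S}^{n-1}$; the measures $\mu_k:=(\mathbf{1}_{A+y_k}-\mathbf{1}_A)/|y_k|$ have total variation bounded by Lemma \ref{17} and converge weakly to $D\mathbf{1}_A\cdot w$, so lower semicontinuity of the total variation gives $\liminf_k f(y_k)/|y_k|\geq |D\mathbf{1}_A\cdot w|(\mathbb{R}^n)$. The map $w\mapsto |D\mathbf{1}_A\cdot w|(\mathbb{R}^n)$ attains a minimum over $\mathbb{S}^{n-1}$, and that minimum is positive by the boundedness of $A$. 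Alternatively, you could finish your version by comparing $\|u(\cdot+y_k)-u\|_{L^1}$ with $\|u(\cdot+|y_k|w)-u\|_{L^1}$, the difference being at most $P(A)\,|y_k|\,\bigl|y_k/|y_k|-w\bigr|=o(|y_k|)$ by Lemma \ref{17}. Either way, the subsequence-of-directions argument together with the positivity of $|D\mathbf{1}_A\cdot w|(\mathbb{R}^n)$ for \emph{every} $w$ is the ingredient you still need to supply.
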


\begin{rem} {\rm
Lemma \ref{17} is well-known, and follows 
by applying \cite[Remark 3.25]{AFP} to $u=\mathbf{1}_A$.
Also Lemma \ref{18} should be known, but we have been unable to find a reference. Therefore, we provide a proof in the appendix.}
\end{rem}

\begin{lem} \label{geo}
There exists a bounded, convex set $\tilde{K} \subset B_{\frac{1}{2}}(0)\cap \mathcal{C}$ so that for all $y=(0,\ldots,0,y_{k+1},\ldots,y_n)$ with $y_n \geq 0$, we have 
\begin{equation} 
\tilde{K} \setminus (y+\tilde{K}) = \tilde{K} \setminus (y+\mathcal{C}).
\end{equation} \label{geo1}
Furthermore, if $y_n \leq 0$, then 
\begin{equation} \label{geo2}
 (y+\tilde{K}) \setminus \tilde{K}=(y+\tilde{K}) \setminus \mathcal{C}.
\end{equation}     
\end{lem}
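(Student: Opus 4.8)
\textbf{Plan of proof for Lemma \ref{geo}.}

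The plan is to construct $\tilde K$ explicitly as the intersection of $B_{1/2}(0)\cap\mathcal C$ with a suitable half-space (or thin slab) that ``looks away'' from the direction $e_n$, and then verify the two set identities by elementary convexity arguments. Recall from \eqref{19z} that, after the normalization $\partial\tilde{\mathcal C}\cap\{x_n=0\}=\{0\}$, the set $\partial\tilde B_{1/2}(0)\cap\tilde{\mathcal C}$ stays in the region $\{x_n\ge b\}$ for some $b=b(n,K)>0$; equivalently, in the full space $\mathbb R^n$, every point of $\partial B_{1/2}(0)\cap\mathcal C$ has $n$-th coordinate $\ge b$ (the first $k$ coordinates being free). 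The key geometric point is therefore that $\mathcal C\cap B_{1/2}(0)$ is ``pinched'' near the origin in the $x_n$-direction: it meets $\{x_n\le 0\}$ only at the single point $0$ on its spherical part.

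First I would \emph{define} $\tilde K$. A natural choice is $\tilde K:=\overline{B_{1/2-\varepsilon}(-\delta e_n)}\cap\overline{\mathcal C}$ for small $\varepsilon,\delta>0$ depending only on $n,K$ (taking the closure so that $\tilde K$ is compact and convex; one may then work with the closed cone $\overline{\mathcal C}$, which is fine since the statement involves measure-zero-irrelevant set identities — alternatively phrase everything with open sets, it does not matter). The point of shifting the center slightly in the $-e_n$ direction and shrinking the radius is to guarantee the strict separation $\tilde K\subset B_{1/2}(0)\cap\mathcal C$ together with a uniform ``room'' estimate: there is $b'=b'(n,K)>0$ with $\tilde K\cap\{x_n\le 0\}=\emptyset$ is \emph{not} quite what we want — rather we want that the part of $\tilde K$ that can ever be uncovered by a downward/upward shift lies strictly inside $\mathcal C$. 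So the precise property I would extract is: $\tilde K$ is convex, $\tilde K\subset B_{1/2}(0)\cap\mathcal C$, and — crucially — for every $y$ of the stated form, the ``new'' pieces $\tilde K\setminus(y+\tilde K)$ (when $y_n\ge0$) and $(y+\tilde K)\setminus\tilde K$ (when $y_n\le0$) are governed purely by whether points lie in $y+\mathcal C$, not by the ball constraint.

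Next I would prove \eqref{geo1}. The inclusion $\tilde K\setminus(y+\mathcal C)\subset \tilde K\setminus(y+\tilde K)$ is trivial since $y+\tilde K\subset y+\mathcal C$. For the reverse, take $z\in\tilde K$ with $z\notin y+\tilde K$; I must show $z\notin y+\mathcal C$, i.e. $z-y\notin\mathcal C$. Suppose for contradiction $z-y\in\mathcal C$. Since $\mathcal C=\mathbb R^k\times\tilde{\mathcal C}$ and $y=(0,\tilde y)$ with $\tilde y_n=y_n\ge 0$, writing $z=(z',\tilde z)$ we have $\tilde z-\tilde y\in\tilde{\mathcal C}$ and $\tilde z\in\tilde{\mathcal C}$ (as $z\in\tilde K\subset\mathcal C$). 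The idea is that moving by $-\tilde y$ (which has nonnegative last coordinate, pointing ``into'' the cone's pinched direction) keeps you inside $\tilde K$: more precisely, because $\tilde K$ is convex, contains a point on the axis near the origin, and is contained in the ball while $\mathcal C\cap B_{1/2}$ is pinched toward $0$ in the $e_n$ direction, the segment from $z$ to $z-y$ stays in $\tilde K$ — hence $z-y\in\tilde K$, contradicting $z\notin y+\tilde K$. Making this ``segment stays in $\tilde K$'' step rigorous is where the explicit choice of $\tilde K$ (ball shifted by $-\delta e_n$, radius $1/2-\varepsilon$) earns its keep: one shows $z-y\in B_{1/2-\varepsilon}(-\delta e_n)$ by a direct computation $|z-y+\delta e_n|^2=|z+\delta e_n|^2-2y\cdot(z+\delta e_n)+|y|^2$ and estimating the cross term using $y_n\ge0$ together with the pinching bound on the last coordinate of points of $\mathcal C\cap B_{1/2}$, and $z-y\in\overline{\mathcal C}$ from $z-y\in\mathcal C$ (our contradiction hypothesis) — so actually the cleaner logic is: assume $z-y\notin\tilde K$; since $z-y\in\mathcal C$, the only way to fail membership in $\tilde K$ is $z-y\notin B_{1/2-\varepsilon}(-\delta e_n)$, but the computation just described shows $z\in B_{1/2-\varepsilon}(-\delta e_n)$ and $y_n\ge0$ force $z-y\in B_{1/2-\varepsilon}(-\delta e_n)$, a contradiction. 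The identity \eqref{geo2} is proved symmetrically: for $y_n\le0$, one shows $(y+\tilde K)\setminus\mathcal C=(y+\tilde K)\setminus\tilde K$, using that if $w\in y+\tilde K$ and $w\in\mathcal C$ then $w\in\tilde K$ by the analogous sphere computation (now $-y_n\ge0$ helps push $w-\delta e_n$ back inside the shifted ball) combined with $w\in\mathcal C\cap(y+\tilde K)\subset\mathcal C$.

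The main obstacle I anticipate is pinning down the quantitative relation between $\varepsilon$, $\delta$, and the pinching constant $b$ so that both \eqref{geo1} and \eqref{geo2} hold for \emph{all} admissible $y$ simultaneously, with no smallness assumption on $|y|$. The delicate case is large $|y|$ with $y_n$ of a definite sign: then $y+\tilde K$ and $\tilde K$ may be far apart and the identities become ``$\tilde K\setminus(y+\tilde K)=\tilde K$'' versus ``$\tilde K\setminus(y+\mathcal C)=\tilde K$'', which forces $\tilde K\cap(y+\mathcal C)=\emptyset$ whenever $y_n>0$ is large — this is exactly the statement that translating the cone upward in $e_n$ eventually misses $\tilde K$, and it is true precisely because $\mathcal C$, containing no lines and being pinched in $e_n$, cannot contain points with very negative $x_n$ that are also near the region occupied by $\tilde K$. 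I would handle this by first recording the elementary fact (from \eqref{19z} and homogeneity of the cone) that $x\in\mathcal C$, $|x|\le 1$ $\Rightarrow$ $x_n\ge 2b|x|$ (say), wait — more carefully, $x\in\partial B_{1/2}\cap\mathcal C\Rightarrow x_n\ge b$, so by scaling $x\in\mathcal C\Rightarrow x_n\ge 2b|x|$; then all the cross-term estimates above become one-line consequences, and the choice $\delta:=b/4$, $\varepsilon:=b^2/100$ (or any such explicit pick) can be checked to work uniformly. Everything else is routine convexity bookkeeping, so once the pinching inequality $x_n\ge 2b|x|$ on $\mathcal C$ is in hand the proof is short.
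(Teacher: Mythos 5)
There is a genuine gap, and it sits exactly at the step you flagged as the crux. Your candidate $\tilde K=\overline{B_{1/2-\varepsilon}(-\delta e_n)}\cap\overline{\mathcal C}$ does not satisfy \eqref{geo1}, because the upper boundary of this set is a piece of a \emph{sphere}, and spheres are not monotone under the translations you must handle. Concretely, the inequality you need, $|y|^2\le 2\,y\cdot(z+\delta e_n)$, fails whenever $y$ is (nearly) horizontal, i.e.\ $y_n=0$ with $y$ supported in the $\tilde{\mathcal C}$-coordinates, and $z$ sits near the top of the shifted ball: then $y\cdot(z+\delta e_n)\approx 0$ while $|y|^2>0$, so $z-y$ leaves the ball. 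This is not a defect of the constants. Take $n=2$, $k=0$, $\mathcal C=\{x_2>3|x_1|\}$ (so $b\approx 0.47$, $\delta=b/4$, $\varepsilon=b^2/100$), let $z=(0,h)$ lie on the spherical part of $\partial\tilde K$ (so $h=1/2-\varepsilon-\delta\approx 0.38$), and let $y=(\rho,0)$ with $\rho>0$ small. Then $y_n=0\ge 0$, $z-y=(-\rho,h)\in\mathcal C$ since $h>3\rho$, but $|z-y+\delta e_2|^2=|z+\delta e_2|^2+\rho^2>(1/2-\varepsilon)^2$, so $z-y\notin\tilde K$. Hence $z\in\tilde K\setminus(y+\tilde K)$ while $z\notin\tilde K\setminus(y+\mathcal C)$, and \eqref{geo1} fails. (A small perturbation makes all memberships strict, and the same phenomenon occurs whenever $n-k\ge 2$; no choice of $\varepsilon,\delta$ rescues it, since the spherical cap is always the active constraint at the top of your $\tilde K$.)

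The fix is to bound $\tilde K$ from above by a \emph{hyperplane} rather than a sphere, which is what the paper does: $\tilde K:=B_{1/2}(0)\cap\mathcal C\cap\bigl(\cap_{i=1}^k\{|x_i|<\tilde b\}\bigr)\cap\{x_n<\tilde b\}$ with $\tilde b$ tiny. The constraints $\{x_n<\tilde b\}$ and $\{|x_i|<\tilde b\}$ are preserved when you pass from $x$ to $x-y$ with $y_n\ge 0$ and $y_1=\dots=y_k=0$, and the pinching of $\tilde{\mathcal C}$ (your inequality $\tilde x_n\ge 2b|\tilde x|$ — note it holds for $\tilde x\in\tilde{\mathcal C}\subset\mathbb R^{n-k}$, \emph{not} for points of the full cone $\mathcal C=\mathbb R^k\times\tilde{\mathcal C}$, whose first $k$ coordinates are unconstrained; this is a second, smaller slip in your write-up) is then used only to show that $x-y\in\mathcal C$ together with $(x-y)_n<\tilde b$ and $|(x-y)_i|<\tilde b$ forces $|x-y|$ to be small, so the ball constraint $x-y\in B_{1/2}(0)$ is automatically slack. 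In other words, in the correct construction the spherical constraint is never active where translation could violate it, whereas in yours it always is. Your reduction of \eqref{geo2} to \eqref{geo1} by the identity $(y+\tilde K)\setminus\tilde K=y+\bigl(\tilde K\setminus(-y+\tilde K)\bigr)$ is fine and matches the paper.
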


\begin{proof} We will show that one may pick $\tilde{b}=\tilde{b}(n,K)>0$ small enough so that $$\tilde{K}:= B_{\frac{1}{2}}(0)\cap \mathcal{C} \cap \big( \cap_{i=1}^k \{|x_i| < \tilde{b} \} \big) \cap \{x_n < \tilde{b} \}$$ has the desired properties. We will establish (\ref{geo1}) first. Since $y+\tilde{K} \subset y+\mathcal{C}$, it suffices to prove $\tilde{K} \setminus (y+\tilde{K}) \subset \tilde{K} \setminus (y+\mathcal{C})$. If (for contradiction) there exists $x \in \tilde{K}\cap (y+\tilde{K})^c \cap (y+\mathcal{C})$, then $x \in \tilde{K}$ and $x-y \in \mathcal{C} \setminus \tilde{K}$. Since $x \in \tilde{K}$ and $y_n \geq 0$, it follows that $x_n -y_n< \tilde{b}$. Also, $|x_i-y_i|=|x_i| < \tilde{b}$ for $i \in \{1,\ldots,k\}$. Now, $x-y \in \mathcal{C}=\mathbb{R}^k \times \mathcal{\tilde{C}}$, hence, $(x_{k+1}-y_{k+1},\ldots, x_n-y_n) \in \mathcal{\tilde{C}}$. Let $b=b(n,K)$ be the constant from (\ref{19z}), and assume without loss of generality that $\tilde{b}<b$. If $z \in \{x_n=\tilde{b}\} \cap \mathcal{\tilde{C}}$ is such that $|z|=d$, where $d=\sup\{|v|: v\in \mathcal{\tilde{C}}, v_n=\tilde{b}\}$, then $\frac{|z|}{\tilde{b}} = \frac{1/2}{b}$ (see Figure 2). 
\begin{figure}
\centering 
\includegraphics[width=.6 \textwidth]{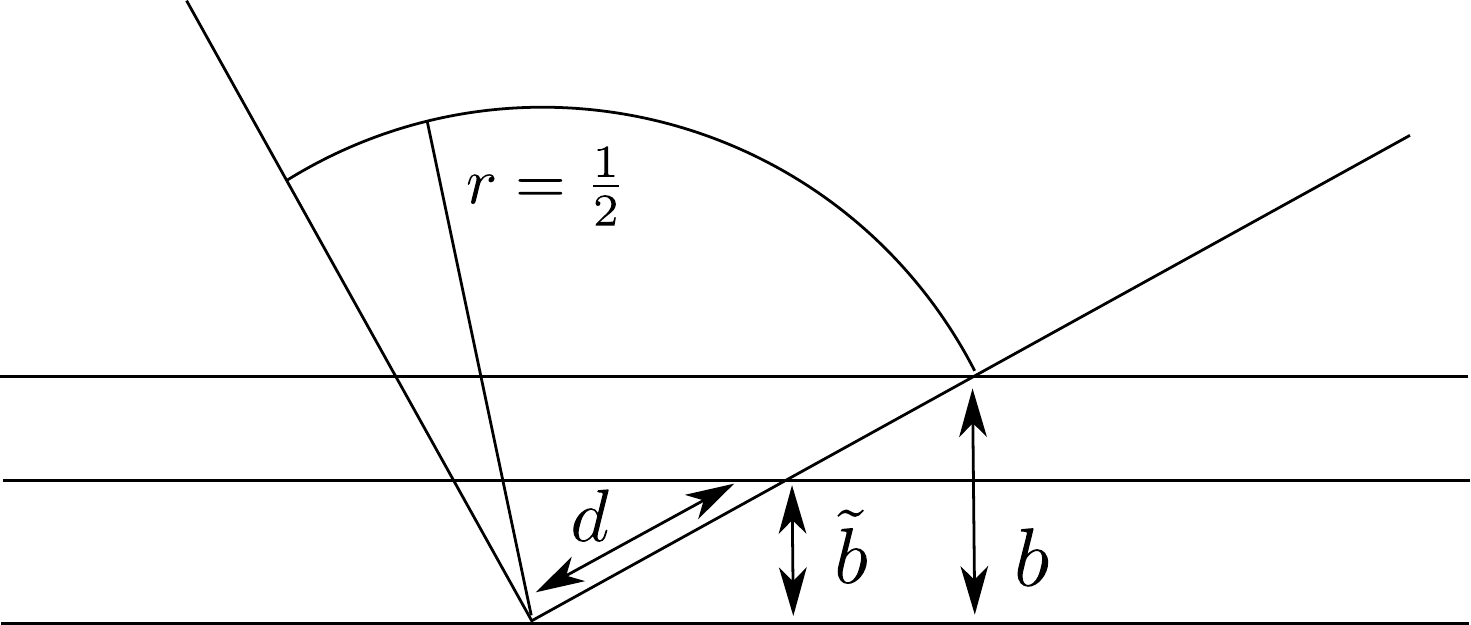}
\caption{$\frac{d}{\tilde{b}} = \frac{1/2}{b}$.}
\end{figure}
Let $\gamma:=\frac{1}{2b}$, $t:=\frac{\tilde{b}}{x_n-y_n}>1$, and recall that $(x_{k+1}-y_{k+1},\ldots, x_n-y_n) \in \mathcal{\tilde{C}}$. Since $\mathcal{\tilde{C}}$ is a cone, we have $w:=t (x_{k+1}-y_{k+1},\ldots, x_n-y_n) \in \mathcal{\tilde{C}}$ with $w_n=\tilde{b}$. Hence, $|w| \leq |z|= \gamma \tilde{b}$, but since $t>1$ we obtain $(x_{k+1}-y_{k+1},\ldots, x_n-y_n) \in \tilde{B}_{\gamma \tilde{b}}(0)$, where $\tilde{B}_{\gamma \tilde{b}}(0)$ denotes the ball in dimension $n-k$. Therefore, $$|x-y|^2 \leq k \tilde{b}^2+(\gamma \tilde{b})^2.$$ Next, pick $M=M(n,K) \in \mathbb{N}$ so that $(k+\gamma^2) \ (\frac{b}{M})^2 < \frac{1}{4}$. Thus, by letting $\tilde{b}:= \frac{b}{M}$, we obtain $x-y \in B_{\frac{1}{2}}(0)$. Therefore, we conclude $x-y \in \tilde{K}$, a contradiction. Hence, (\ref{geo1}) is established. Since $(y+\tilde{K}) \setminus \tilde{K}=y+ \big(\tilde{K} \setminus (-y+\tilde{K})\big)$ and $(y+\tilde{K}) \setminus \mathcal{C}=y+\big(\tilde{K} \setminus (-y+\mathcal{C})\big)$, (\ref{geo2}) follows from (\ref{geo1}).                
\end{proof}
The next lemma (whose proof is postponed to the appendix) tells us that a set with finite mass, perimeter, and small relative deficit has a subset with almost the same mass, good trace constant, and small relative deficit (compare with \cite[Theorem 3.4]{Fi}).
\begin{lem} \label{21a} Let $E \in \mathcal{D}$ with $|E|=|K|.$ Then there exists a set of finite perimeter $G\subset E$ and constants $k(n), c_{\ref{21a}}(n), C_{\ref{21a}}(n,K)>0$ such that if $\mu(E) \leq c_{\ref{21a}}(n),$ then   
\begin{equation} \label{23}
|E \setminus G| \leq \frac{\mu(E)}{k(n)}|E|,
\end{equation}
\begin{equation} \label{24}
\tau(G) \geq 1 + \frac{m_{K_0}}{M_{K_0}}k(n), 
\end{equation}
\begin{equation} \label{24d}
\mu(G) \leq C_{\ref{21a}}(n,K) \mu(E).
\end{equation}
\end{lem}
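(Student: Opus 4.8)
The plan is to follow the strategy of \cite[Theorem 3.4]{Fi}, adapting it to the relative-perimeter setting. The guiding principle is that the trace constant $\tau(E)$ can only fail to exceed $1$ because of ``bad'' pieces of $E$ — thin necks, cusps, or small far-away connected components — and that removing all such pieces costs only a controlled amount of mass and deficit. First I would recall the variational characterization behind $\tau$: for a set $F \subset E$ with $|F| \le |E|/2$, the ratio $P_{K_0}(F)/\int_{\mathcal F F \cap \mathcal F E} \|\nu_E\|_{K_0*}\,dH^{n-1}$ measures how much of $\partial F$ is ``new'' boundary (inside $E$) versus ``old'' boundary (shared with $\partial E$); $\tau(E)$ small means there is a subset $F$ whose boundary is almost entirely inherited from $\partial E$, i.e. $F$ is almost disconnected from the rest of $E$ through $\mathcal C$. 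The key quantitative input is that, by Corollary \ref{8}, $\delta_{K_0}(E) = \delta_K(E) \le \mu(E)$ (using Lemma \ref{6}), so $E$ is close to $K_0$ in the anisotropic-deficit sense, and one can transfer to it the machinery of \cite{Fi}.

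The main steps, in order, would be: (1) Fix the threshold and, arguing by contradiction or directly via the definition of the infimum, suppose $\tau(E)$ is close to $1$; extract a ``maximal bad set'' — more precisely, among all $F \subset E$ with $|F| \le |E|/2$ realizing a nearly-optimal ratio, take one of maximal measure, and set $G := E \setminus F$ (with the roles arranged so that $|F|$ is the small piece). (2) Show $|E \setminus G| = |F|$ is controlled by $\mu(E)$: here one uses the relative isoperimetric inequality \eqref{i1} applied to $F$ inside $\mathcal C$ together with the fact that the ``new'' boundary $P_{K_0}(F) - \int_{\mathcal F F \cap \mathcal F E}\|\nu_E\|_{K_0*}$ must be small (otherwise $\tau(E)$ would not be near $1$), and this new boundary is, up to the constants $m_{K_0}, M_{K_0}$, comparable to $H^{n-1}(\mathcal F F \cap \mathcal C \cap E^{(1)})$; combining with $\delta_{K_0}(E) \le \mu(E)$ and the quantitative isoperimetric-type bookkeeping of \cite{Fi} gives $|F| \le \frac{\mu(E)}{k(n)}|E|$. (3) Show the residual set $G$ has a good trace constant: if some $F' \subset G$ violated \eqref{24}, then $F \cup F'$ (or $F'$ itself, depending on its size) would be a strictly larger competitor with a ratio still near $1$, contradicting maximality; the gain $\frac{m_{K_0}}{M_{K_0}} k(n)$ comes from the isoperimetric surplus any genuinely new boundary piece must carry, with the factor $\frac{m_{K_0}}{M_{K_0}}$ accounting for the passage between $\|\cdot\|_{K_0*}$ and Euclidean surface measure via \eqref{10a}. (4) Finally, control $\mu(G)$: since $G = E \setminus F$, we have $H^{n-1}(\mathcal F G \cap \mathcal C) \le H^{n-1}(\mathcal F E \cap \mathcal C) + (\text{new boundary of } F \text{ inside } \mathcal C)$, and $|G| \ge |E|(1 - \mu(E)/k(n))$; plugging into the representation \eqref{fin20} of the deficit (after rescaling $G$ to have the right volume, which changes $\mu$ by a factor $1 + O(\mu(E))$ by a one-variable Taylor estimate) yields $\mu(G) \le C_{\ref{21a}}(n,K)\,\mu(E)$.

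The step I expect to be the main obstacle is step (2)–(3), i.e. simultaneously controlling $|E \setminus G|$ and producing the uniform trace-constant gain: one must run the selection argument for the maximal bad subset carefully so that the same set $G$ works for both \eqref{23} and \eqref{24}, and the estimates must be made in terms of $\mu(E)$ rather than $\delta_{K_0}(E)$, which requires invoking Corollary \ref{8} at the right places and keeping track of how the cone geometry enters through $m_{K_0}$ and $M_{K_0}$. The underlying difficulty — exactly as in \cite[Section 3]{Fi} — is that the definition of $\tau$ is an infimum over infinitely many competitors, so one cannot simply ``remove the one bad piece''; instead the maximality of $F$ must be exploited to rule out the existence of any further bad piece inside $G$. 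Compared to \cite{Fi}, the extra care needed here is only in handling boundary pieces lying on $\partial \mathcal C$, where $\|\nu\|_{K_0*}$ may vanish or be negative; but since such pieces contribute zero to the relative perimeter $P(E|\mathcal C)$ (as shown in the proof of Theorem \ref{7}), they do not interfere with the estimates, and the adaptation is essentially routine bookkeeping.
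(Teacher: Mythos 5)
Your proposal is correct and follows essentially the same route as the paper: the paper obtains $G$ with (\ref{23}) and (\ref{24}) by directly invoking \cite[Theorem 3.4]{Fi} (whose selection-of-a-maximal-bad-subset argument is what you re-sketch in your steps (1)--(3)) combined with Corollary \ref{8}, so the only genuinely new work is your step (4). There the paper proceeds just as you indicate --- bounding the new boundary $H^{n-1}(\mathcal{F}G\cap E^{(1)})$ by $C(n,K)\mu(E)$ using the trace inequality satisfied by the removed set $F_\infty$, estimate (\ref{10a}), and the anisotropic isoperimetric inequality applied to $G$, and then rescaling $G$ and using the representation (\ref{fin20}) --- so your sketch matches, though this quantitative bound on the new boundary is the step you leave most implicit.
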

The big advantage of using $G$ in place of $E$ is that (\ref{24}) implies a nontrivial trace inequality for $G$ which allows us to exploit Gromov's proof in order to prove (\ref{12o}) with $G$ in place of $E$. Indeed, if $E$ is smooth with a uniform Lipschitz bound on $\partial E$, one may take $G=E$. 
\begin{lem} \label{15}
Let $E \in \mathcal{D}$, $|E|=|K|$, and assume $\mu(E) \leq c_{\ref{21a}}(n)$, with $G \subset E$ and $c_{\ref{21a}}(n)$ as in Lemma \ref{21a}. Moreover, let $r>0$ satisfy $|rG|=|K|$. Then there exists $\hat{\alpha}=\hat{\alpha}(E) \in \mathbb{R}^n$ and a constant  $C_{\ref{15}}(n,K)>0$ such that $\int_{\mathcal{F} (rG) \cap \mathcal{C}} |1-|x-\hat{\alpha}|| dH^{n-1} \leq C_{\ref{15}}(n,K) \sqrt{\mu(E)}$.
 \end{lem}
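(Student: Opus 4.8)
The plan is to apply Gromov's argument, in the quantitative form developed in \cite{Fi}, to the rescaled set $F:=rG$, and then import the technical estimates of \cite{Fi} essentially verbatim. First I would observe that, since $\mathcal{C}$ is a cone and both $\mu$ and $\tau$ are scaling invariant, $F\in\mathcal{D}$ with $|F|=|K|$, $\mu(F)=\mu(G)\le C_{\ref{21a}}(n,K)\mu(E)$ by \eqref{24d}, and $\tau(F)=\tau(G)\ge 1+\tfrac{m_{K_0}}{M_{K_0}}k(n)>1$ by \eqref{24}. Thus it suffices to produce a vector $\hat{\alpha}\in\mathbb{R}^n$ with $\int_{\mathcal{F}F\cap\mathcal{C}}\bigl|1-|x-\hat{\alpha}|\bigr|\,dH^{n-1}\le C(n,K)\sqrt{\mu(F)}$.

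Next I would let $T=\nabla\phi:F\to K$ be the Brenier map and set $T_0:=T+x_0:F\to K_0$ (so $T_0-x_0=T$ is the Brenier map from $F$ to $K_0$), and re-run the chain of inequalities in the proof of Theorem \ref{7} with $F$ in place of $E$. Using $n|F|=n|K|=H^{n-1}(\partial B_1\cap\mathcal{C})$ and the representation \eqref{fin20} of $\mu(F)$, one gets
\begin{align*}
n|F| &\le n\int_F(\det\nabla T)^{\frac1n}\,dx \le \int_F\operatorname{div}T\,dx \le \operatorname{Div}T(F^{(1)}) \\
&= \int_{\mathcal{F}F}\operatorname{tr}_F(T)\cdot\nu_F\,dH^{n-1} \le P_K(F) \le H^{n-1}(\mathcal{F}F\cap\mathcal{C}) = \bigl(1+\mu(F)\bigr)\,n|F|,
\end{align*}
so every gap in this chain is bounded by $n|K|\mu(F)$. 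Exactly as in \cite{Fi}, this yields: (i) from the arithmetic--geometric mean gap $\int_F\bigl(\operatorname{div}T-n(\det\nabla T)^{1/n}\bigr)dx\le n|K|\mu(F)$, the quantitative AM--GM inequality (with a truncation controlling the large eigenvalues of the symmetric positive semidefinite matrix $\nabla T$, and using $\det\nabla T=1$) gives $\|\nabla T-\operatorname{Id}\|_{L^1(F)}\le C(n)\sqrt{\mu(F)}$; (ii) $|D_sT_0|(F^{(1)})=|D_sT|(F^{(1)})\le\operatorname{trace}(D_sT)(F^{(1)})\le n|K|\mu(F)$, using \eqref{las1} and that $D\nabla\phi$ is positive semidefinite; (iii) Corollary \ref{13} applied to $F$ gives $\int_{\mathcal{F}F\cap\mathcal{C}}\bigl(1-|\operatorname{tr}_F(T_0-x_0)(x)|\bigr)dH^{n-1}\le n|K|\mu(F)$, with $|\operatorname{tr}_F(T_0-x_0)|\le 1$ on $\mathcal{F}F\cap\mathcal{C}$.

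The core step is then to feed (i) and (ii) into the Sobolev--Poincar\'e trace inequality \cite[Lemma 3.1]{Fi} --- available precisely because $\tau(F)>1$, which is the entire reason for replacing $E$ by $G$ via Lemma \ref{21a} --- applied to the map $T_0$; after subtracting a suitable constant this produces $\hat{\alpha}\in\mathbb{R}^n$ with
\begin{align*}
\int_{\mathcal{F}F}\bigl|\operatorname{tr}_F(T_0-x_0)(x)-(x-\hat{\alpha})\bigr|\,\|\nu_F\|_{K_0*}\,dH^{n-1} &\le C(n,K)\Bigl(\|\nabla T-\operatorname{Id}\|_{L^1(F)}+|D_sT_0|(F^{(1)})\Bigr) \\
&\le C(n,K)\sqrt{\mu(F)},
\end{align*}
where the factor $\tfrac{\tau(F)}{\tau(F)-1}$ coming from \cite[Lemma 3.1]{Fi} is absorbed into $C(n,K)$ since $\tau(F)-1\ge\tfrac{m_{K_0}}{M_{K_0}}k(n)>0$. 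Since $\|\nu_F\|_{K_0*}\ge m_{K_0}>0$, the same estimate holds (up to the harmless factor $m_{K_0}^{-1}$) with the weight dropped and the domain restricted to $\mathcal{F}F\cap\mathcal{C}$. Finally, on $\mathcal{F}F\cap\mathcal{C}$, using $|\operatorname{tr}_F(T_0-x_0)(x)|\le 1$ and the triangle inequality,
\[\bigl|1-|x-\hat{\alpha}|\bigr|\le\bigl(1-|\operatorname{tr}_F(T_0-x_0)(x)|\bigr)+\bigl|\operatorname{tr}_F(T_0-x_0)(x)-(x-\hat{\alpha})\bigr|;\]
integrating over $\mathcal{F}F\cap\mathcal{C}$, using (iii) and the previous display, and absorbing the linear term via $\mu(F)\le\sqrt{C_{\ref{21a}}(n,K)\,c_{\ref{21a}}(n)}\,\sqrt{\mu(F)}$, gives $\int_{\mathcal{F}F\cap\mathcal{C}}|1-|x-\hat{\alpha}||\,dH^{n-1}\le C(n,K)\sqrt{\mu(F)}\le C_{\ref{15}}(n,K)\sqrt{\mu(E)}$. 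The hard part will be the weighted trace estimate in the core step: this is the technical heart of the proof of \cite[Theorem 1.1]{Fi}, requiring the delicate interplay of the quantitative AM--GM inequality with the eigenvalue truncation, the control of the singular part of $DT_0$, and the Sobolev--Poincar\'e inequality whose constant degenerates as $\tau(F)\to1$ (hence the indispensability of Lemma \ref{21a}). Relative to \cite{Fi} the genuinely new points are only the cone bookkeeping: the relevant gaps now come from Theorem \ref{7} and Corollary \ref{13} rather than their Euclidean analogues, the boundary piece $\mathcal{F}F\cap\partial\mathcal{C}$ is discarded because there $\|\nu_F\|_{K*}=0$ and $\operatorname{tr}_F(T_0-x_0)\cdot\nu_F\le0$, and one keeps track that the final constant depends on the geometry of $\mathcal{C}$ through $m_{K_0}$ and $M_{K_0}$.
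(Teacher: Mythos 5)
Your proposal is correct and follows essentially the same route as the paper: rescale to $rG$ so that the trace constant bound \eqref{24} from Lemma \ref{21a} makes \cite[Lemma 3.1]{Fi} applicable to the components of $S=\tilde T_0-x$, bound $|DS|((rG)^{(1)})$ by $C\sqrt{\mu(G)}$ via \cite[Corollary 2.4]{Fi} and Corollary \ref{8}, and combine with Corollary \ref{13} through the triangle inequality, absorbing the linear $\mu$ term using the smallness assumption. The only cosmetic difference is that you re-derive the content of \cite[Corollary 2.4]{Fi} (the quantitative AM--GM step and the control of the singular part) where the paper simply cites it.
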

\begin{proof} Let $\tilde{T}_0:rG \rightarrow K_0$ be the Brenier map from $rG$ to $K_0$, and denote by $S_i$ the $i^{th}$ component of $S(x)=\tilde{T}_0(x)-x$. For all $i$, we apply \cite[Lemma $3.1$]{Fi} to the function $S_i$ and the set $rG$ to obtain a vector $a=a(E)=(a_1,\dots,a_n) \in \mathbb{R}^n$ such that

\begin{align*} 
\int_{\mathcal{F} (rG) \cap \mathcal{C}} \operatorname{tr}_{(rG)}(|S_i(x)+a_i)|)||\nu_{(rG)}&||_{K_0*} dH^{n-1}(x)\\ 
&\leq \frac{M_{K_0}}{m_{K_0} (\tau(rG) - 1)} ||-DS_i||_{K_0*}((rG)^{(1)})\\  
&\leq \frac{M_{K_0}^2}{m_{K_0} (\tau(rG) - 1)}|-DS_i|((rG)^{(1)})\\ 
&\leq \frac{M_{K_0}^2}{m_{K_0} (\tau(rG) - 1)}|DS|((rG)^{(1)}),
\end{align*}
where we have used (\ref{10}) in the second inequality. Next, recall that $\tau$ is scaling invariant. Hence, using (\ref{24}) we have   
\begin{equation}
\int_{\mathcal{F} (rG) \cap \mathcal{C}} \operatorname{tr}_{(rG)}(|S_i(x)+a_i)|) ||\nu_{(rG)}||_{K_0*}dH^{n-1}(x) \leq \frac{M_{K_0}^3}{m^2_{K_0}k(n)}|DS|((rG)^{(1)}).
\end{equation}
But by \cite[Corollary $2.4$]{Fi} and Corollary \ref{8}, $$|DS|((rG)^{(1)}) \leq 9n^2 |K|\sqrt{\delta_{K_0}(rG)} \leq 9n^2 |K|\sqrt{\mu(rG)}=9n^2 |K|\sqrt{\mu(G)}.$$ Therefore, by summing over $i=1,2,...,n$ we obtain 
\begin{equation} \label{24aa}
\int_{\mathcal{F} (rG) \cap \mathcal{C}} \operatorname{tr}_{(rG)}(|S(x)+a)|) ||\nu_{(rG)}||_{K_0*} dH^{n-1}(x) \leq \frac{9n^3|K|M_{K_0}^3}{m^2_{K_0}k(n)}\sqrt{\mu(G)}.
\end{equation}   

\noindent Let $\hat{\alpha}=\hat{\alpha}(E):= a+x_0$, with $x_0$ as in the definition of $K_0$ (see (\ref{9})). The triangle inequality implies
\begin{align*}
|1-|x-\hat{\alpha}||&=|1-|x-(a+x_0)|| \leq |1-\operatorname{tr}_{(rG)}(|\tilde{T}_0(x)-x_0|)| \\
& +  |\operatorname{tr}_{(rG)}(\tilde{T}_0(x)-x_0)-(x-(a+x_0))| \\
&=|1-|\operatorname{tr}_{(rG)}(\tilde{T}_0(x)-x_0)|| + \operatorname{tr}_{(rG)}(|\tilde{T}_0(x)-x+a|).
\end{align*}
Hence, by Corollary \ref{13}, (\ref{24aa}), and (\ref{24d}) we have 
\begin{align*}
\int_{\mathcal{F} (rG) \cap \mathcal{C}} m_{K_0} &|1-|x-\hat{\alpha}|| dH^{n-1}(x)\\
&\leq \int_{\mathcal{F}(rG) \cap \mathcal{C}} \big|1-|x-\hat{\alpha}|\big| ||\nu_{(rG)}||_{K_0*} dH^{n-1}(x) \\
&\leq \int_{\mathcal{F} (rG) \cap \mathcal{C}} \big|1-|\operatorname{tr}_{(rG)}(\tilde{T}_0(x)-x_0)|\big| ||\nu_{(rG)}||_{K_0*} dH^{n-1}(x)\\
&+ \int_{\mathcal{F} (rG) \cap \mathcal{C}} \operatorname{tr}_{(rG)}(|S(x))+a|) ||\nu_{(rG)}||_{K_0*} dH^{n-1}(x)\\ 
&\leq M_{K_0}n|K| \mu(G) + 9n^3|K|\frac{M_{K_0}^3}{m^2_{K_0}k(n)}\sqrt{\mu(G)} \\
&\leq M_{K_0}n|K| C_{\ref{21a}}(n,K) \mu(E) + \frac{9n^3|K|M_{K_0}^3}{m^2_{K_0}k(n)}\sqrt{C_{\ref{21a}}(n,K)}\sqrt{\mu(E)}. 
\end{align*}
As $\mu(E) \leq 1$, the result follows.
\end{proof} 
The translation $\hat{\alpha}$ from Lemma \ref{15} can be scaled so that it enjoys some nice properties which we list in the next lemma. The proof is essentially the same as that of \cite[Theorem 1.1]{Fi}, adapted slightly in order to accommodate our setup. However, for the sake of completion, we include it in the appendix. 
 
\begin{lem} \label{l0}
Suppose $E \in \mathcal{D}$ with $|E|=|K|$. Let $\hat{\alpha}=\hat{\alpha}(E)$, $G$, and $r$ be as in Lemma \ref{15}. Define $\alpha=\alpha(E):=\frac{\hat{\alpha}}{r}$. Then there exists a positive constant $C_{\ref{l0}}(n,K)$ such that for $\mu(E) \leq c_{\ref{21a}}(n)$, with $c_{\ref{21a}}(n)$ as in Lemma \ref{21a}, we have 
\begin{equation} \label{16a}
|E \Delta (\alpha+K)| \leq C_{\ref{l0}}(n,K)\sqrt{\mu(E)},   
\end{equation}

\begin{equation} \label{l2}
|(rG) \Delta (\hat{\alpha}+K)| \leq C_{\ref{l0}}(n,K) \sqrt{\mu(E)},  
\end{equation}
and 
\begin{equation} \label{l3}
r \leq 1+\frac{2\mu(E)}{k(n)}.
\end{equation}     
\end{lem}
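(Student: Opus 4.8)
The plan is to prove \eqref{l3} and \eqref{l2} first, and then to deduce \eqref{16a} from \eqref{l2} by rescaling together with the triangle inequality for symmetric differences. Throughout I write $F:=rG$, so that $|F|=|rG|=|K|$ and, by Lemma \ref{15}, $\int_{\mathcal F F\cap\mathcal C}\bigl|1-|x-\hat\alpha|\bigr|\,dH^{n-1}\le C_{\ref{15}}(n,K)\sqrt{\mu(E)}$; moreover $\tau(F)=\tau(G)\ge 1+\frac{m_{K_0}}{M_{K_0}}k(n)>1$ by \eqref{24} and scaling invariance of $\tau$, and $G\in\mathcal D$ since $G\subset\mathcal C$ is a set of finite perimeter with $|G|\le|E|<\infty$. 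I would also shrink the threshold $c_{\ref{21a}}(n)$ so that $c_{\ref{21a}}(n)\le\min\{1,k(n)/2\}$; this only strengthens the hypotheses of the lemmas quoted and leaves their conclusions intact.

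\emph{Step 1: the bound \eqref{l3} on $r$.} From \eqref{23}, $|G|=|E|-|E\setminus G|\ge |K|\bigl(1-\mu(E)/k(n)\bigr)$, while $r^n|G|=|K|$ by the choice of $r$. Since $\mu(E)/k(n)\le 1/2$ this gives
\begin{equation*}
r^n\le\frac{1}{1-\mu(E)/k(n)}\le 1+\frac{2\mu(E)}{k(n)},
\end{equation*}
and \eqref{l3} follows on taking $n$-th roots and using $(1+t)^{1/n}\le 1+t$ for $t\ge 0$. I record for later that $r\ge 1$ (because $|G|\le|E|=|K|$) and $r^n-1\le 2\mu(E)/k(n)$.

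\emph{Step 2: the bound \eqref{l2}.} This is the heart of the lemma, and here I would transcribe the relevant portion of the proof of \cite[Theorem 1.1]{Fi}: the part which, starting from a Sobolev--Poincar\'e/trace inequality of exactly the form furnished by Lemma \ref{15} for a set with trace constant bounded away from $1$, deduces $L^1$-proximity of that set to a translate of the model body. Passing to the cone is benign: as observed in the proof of Theorem \ref{7}, $||\nu||_{K*}=0$ along the outer normals to $\partial\mathcal C$, so $P_K(F)=\int_{\mathcal F F\cap\mathcal C}||\nu_F||_{K*}\,dH^{n-1}$ and every surface integral appearing in \cite{Fi} is in fact carried by $\mathcal F F\cap\mathcal C$; wherever \cite{Fi} invokes an isoperimetric-type inequality one uses instead the relative isoperimetric inequality inside $\mathcal C$ (Theorem \ref{7}); and the translation selected by the argument is precisely the $\hat\alpha$ of Lemma \ref{15}. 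One thus obtains $|F\Delta(\hat\alpha+K)|\le C(n,K)\sqrt{\delta_K(F)}$. Finally $\delta_K(F)=\delta_K(G)$ by scaling invariance of $\delta_K$, and $\delta_K(G)\le\mu(G)\le C_{\ref{21a}}(n,K)\mu(E)$ by Corollary \ref{8} and \eqref{24d}, which yields \eqref{l2}. I expect this step to be the only genuinely delicate one; however, since Lemma \ref{15} has already absorbed all the cone-specific difficulties and the part of $\mathcal F F$ on $\partial\mathcal C$ carries no $||\cdot||_{K*}$-weight, the required reasoning runs parallel to \cite{Fi}.

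\emph{Step 3: the bound \eqref{16a}.} Since $\alpha=\hat\alpha/r$ and $F=rG$, rescaling \eqref{l2} by the factor $1/r$ and using $r\ge 1$ gives
\begin{equation*}
\Bigl|G\,\Delta\,\bigl(\alpha+\tfrac{1}{r}K\bigr)\Bigr|=\frac{1}{r^n}\bigl|F\,\Delta\,(\hat\alpha+K)\bigr|\le C(n,K)\sqrt{\mu(E)}.
\end{equation*}
Then, by the triangle inequality,
\begin{equation*}
|E\,\Delta\,(\alpha+K)|\le|E\setminus G|+\Bigl|G\,\Delta\,\bigl(\alpha+\tfrac{1}{r}K\bigr)\Bigr|+\Bigl|\bigl(\alpha+\tfrac{1}{r}K\bigr)\,\Delta\,(\alpha+K)\Bigr|.
\end{equation*}
The first term is $\le\frac{\mu(E)}{k(n)}|K|$ by \eqref{23}; for the third, $\frac{1}{r}K=B_{1/r}\cap\mathcal C\subset K$ because $r\ge 1$, so it equals $|K|\bigl(1-r^{-n}\bigr)\le|K|\bigl(r^n-1\bigr)\le\frac{2|K|}{k(n)}\mu(E)$ by Step 1. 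Since $\mu(E)\le 1$, all three terms are bounded by a constant depending only on $n$ and $K$ times $\sqrt{\mu(E)}$, and taking $C_{\ref{l0}}(n,K)$ to be the largest of the constants produced above gives \eqref{16a}, completing the proof.
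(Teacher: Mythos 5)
Your proposal is correct and follows essentially the same route as the paper: the heart of the matter, \eqref{l2}, is in both cases delegated to the machinery of \cite[Theorem 1.1]{Fi} applied to $rG$ (the paper quotes \cite[Inequality (3.30)]{Fi} and the estimates beneath it, with the translation $a=\hat\alpha-x_0$ produced by \cite[Lemma 3.1]{Fi}, then converts $\delta_{K_0}(rG)=\delta_{K_0}(G)\le\mu(G)\le C_{\ref{21a}}(n,K)\mu(E)$ exactly as you do), while \eqref{l3} and \eqref{16a} follow from \eqref{23} and elementary symmetric-difference manipulations; your Step 3 passes through $G$ where the paper passes through $rE$, an immaterial variation. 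One caution on your framing of Step 2: the cone adaptations you describe are unnecessary and partly inaccurate --- the argument of \cite{Fi} must be run with the body $K_0=K+x_0$ (so that $m_{K_0}>0$), for which $||\nu_{\mathcal C}||_{K_0*}=\nu_{\mathcal C}\cdot x_0$ need \emph{not} vanish on $\partial\mathcal C$ (only $||\cdot||_{K*}$ does), so the surface integrals are not carried by $\mathcal{F}F\cap\mathcal C$ alone; the correct and simpler observation is that \cite[Theorem 1.1]{Fi} applies verbatim in $\mathbb{R}^n$ with no reference to the cone, which enters only through Corollary \ref{8} and \eqref{24d}.
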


Next, define $\mathbb{R}_{+}^{n} := \{(x_1, x_2, ..., x_n) \in \mathbb{R}^n: x_n \geq 0 \}$ ($\mathbb{R}_{-}^{n}$ is defined in a similar manner). In the case $\alpha \in \mathbb{R}_{-}^n$, the following lemma tells us that the last $(n-k)$ components of $\alpha$ are controlled by the relative deficit.  

\begin{lem} \label{19}
Let $E \in \mathcal{D}$ with $|E|=|K|$, and let $\alpha=\alpha(E)=(\alpha_1, \alpha_2) \in \mathbb{R}^{k} \times \mathbb{R}^{n-k}$ be as in Lemma \ref{l0}. There exist positive constants $c_{\ref{19}}(n,K), C_{\ref{19}}(n,K)$ such that if $\alpha \in \mathbb{R}_{-}^{n}$ and $\mu(E) \leq c_{\ref{19}}(n,K)$, then $|\alpha_2| \leq C_{\ref{19}}(n,K) \sqrt{\mu(E)}$.
\end{lem}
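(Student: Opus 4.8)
The claim is that when $\alpha\in\mathbb{R}^n_-$ (so $\alpha_2$ points ``into'' the cone along the $-x_n$ direction), its length is controlled by $\sqrt{\mu(E)}$. The strategy mirrors the sketch in Section~1.3, but in the regime $y_n\le 0$ where Lemma~\ref{geo} gives \eqref{geo2} rather than \eqref{geo1}. First I would reduce to the case $|E|=|K|$, which is the standing assumption, and shift in the first $k$ coordinates: set $\tilde E:=E-(\alpha_1,0)$, so that by \eqref{16a} (rewritten after translation, as in \eqref{12e}) we have $|\tilde E\,\Delta\,((0,\alpha_2)+K)|\le C(n,K)\sqrt{\mu(E)}$. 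The goal becomes bounding $|\alpha_2|$ by $\sqrt{\mu(E)}$ using this closeness together with the relative isoperimetric inequality in the half-ball $B_{1/2}(0)\cap\mathcal C$.

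\textbf{Key steps.} (1) Combine \eqref{12e} with the relative isoperimetric inequality inside $B_{1/2}(0)\cap\mathcal C$ — exactly as in the displayed computation leading to \eqref{t17} in the introduction — to obtain
$$|(B_{\tfrac12}(0)\cap\mathcal C)\setminus((0,\alpha_2)+K)|\le \tilde C(n,K)\sqrt{\mu(E)},$$
valid once $\mu(E)$ is small enough (here one also needs $|\alpha_2|$ small, which is legitimate since Proposition~\ref{29} — invoked in the main proof — guarantees $|\alpha_2|\to 0$ as $\mu(E)\to 0$; alternatively one restricts to $\mu(E)\le c_{\ref{19}}(n,K)$ chosen so small that \eqref{12e} forces $|\alpha_2|$ below any prescribed threshold). (2) Now prove the purely geometric lower bound
$$c(n,K)\,|\alpha_2|\ \le\ |(B_{\tfrac12}(0)\cap\mathcal C)\setminus((0,\alpha_2)+K)|.$$
Since $\alpha_2\in\mathbb{R}^{n-k}_-$, i.e. its $x_n$-component is $\le 0$, the translated set $(0,\alpha_2)+\tilde K$ (with $\tilde K$ the convex body from Lemma~\ref{geo}) satisfies $((0,\alpha_2)+\tilde K)\setminus\tilde K = ((0,\alpha_2)+\tilde K)\setminus\mathcal C$ by \eqref{geo2}. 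I would use this to convert the symmetric-difference estimate into one about how far $(0,\alpha_2)+K$ sticks out of the cone, and then relate the volume of the ``slab'' $(B_{1/2}(0)\cap\mathcal C)\setminus((0,\alpha_2)+K)$ to $|\alpha_2|$ linearly. Concretely: $\tilde K\subset B_{1/2}(0)\cap\mathcal C$, and translating $\tilde K$ by $(0,\alpha_2)$ moves a definite fraction of its mass out of $\tilde K$ (this is Lemma~\ref{18} applied to $A=\tilde K$: $\min\{c_{\ref{18}}, C_{\ref{18}}|\alpha_2|\}\le|((0,\alpha_2)+\tilde K)\,\Delta\,\tilde K|$), and by \eqref{geo2} that displaced mass lies outside $\mathcal C$, hence outside $(0,\alpha_2)+K\subset\mathcal C$... wait, more carefully: the mass of $\tilde K$ not covered by $(0,\alpha_2)+\tilde K$ equals, up to translation, $|\tilde K\setminus((0,\alpha_2)+\tilde K)| = |\tilde K\setminus((0,\alpha_2)+\mathcal C)|$ — but since $\alpha_2$ has nonpositive last coordinate one instead uses that $(0,\alpha_2)+K$ fails to cover a region of $B_{1/2}(0)\cap\mathcal C$ of volume $\gtrsim|\alpha_2|$ because shifting $K$ downward along $x_n$ pulls it away from a boundary facet of $\mathcal C$. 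A clean way: by Lemma~\ref{17}, $|((0,\alpha_2)+K)\,\Delta\,K|\le C_{\ref{17}}|\alpha_2|$; and $|(B_{1/2}\cap\mathcal C)\setminus K|$ is a fixed positive constant $v_0(n,K)$, so $|(B_{1/2}\cap\mathcal C)\setminus((0,\alpha_2)+K)|\ge v_0 - C_{\ref{17}}|\alpha_2|$, which is bounded below by $v_0/2$ for $|\alpha_2|$ small — but that gives only a constant lower bound, not one proportional to $|\alpha_2|$. So the linear bound genuinely requires the one-sidedness from \eqref{geo2}: the correct argument isolates the extra wedge of volume $\sim|\alpha_2|$ created near $\partial\mathcal C$ when $K$ is translated by a vector with $(\alpha_2)_n\le 0$.

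\textbf{Main obstacle.} The crux is step (2) — establishing the \emph{linear} (not merely positive) lower bound $c(n,K)|\alpha_2|\le|(B_{1/2}(0)\cap\mathcal C)\setminus((0,\alpha_2)+K)|$ for $\alpha_2\in\mathbb{R}^{n-k}_-$. This is where the hypothesis $\alpha\in\mathbb{R}^n_-$ and the structure of $\tilde K$ from Lemma~\ref{geo} are essential: for a translation with nonpositive last coordinate, $(0,\alpha_2)+\tilde K$ moves strictly away from the ``upper'' boundary facet of the cone (the one with outer normal having positive $x_n$-component), leaving a region inside $B_{1/2}(0)\cap\mathcal C$ uncovered whose volume is, by a direct slicing/Fubini argument in the $x_n$-variable, at least $c(n,K)|\alpha_2|$ — one integrates the $(n-1)$-dimensional cross-sectional deficit over an interval of length $\sim|\alpha_2|$. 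I expect this to require a careful but elementary computation using \eqref{19z} (the constant $b$) to guarantee the cross-sections have definite size. Once this is in hand, combining with the estimate from step (1) gives $c(n,K)|\alpha_2|\le \tilde C(n,K)\sqrt{\mu(E)}$, i.e. $|\alpha_2|\le C_{\ref{19}}(n,K)\sqrt{\mu(E)}$, provided $\mu(E)\le c_{\ref{19}}(n,K)$ is small enough for all the smallness thresholds above (and for \eqref{t14}, where one needs $|\tilde E\cap(B_{1/2}(0)\cap\mathcal C)|\ge|(B_{1/2}(0)\cap\mathcal C)\setminus\tilde E|$, which follows from \eqref{12e} and $|\alpha_2|$ small). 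This completes the proof.
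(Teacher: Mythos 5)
There is a genuine gap, and in fact your main geometric claim is false. In step (2) you assert the linear lower bound $c(n,K)\,|\alpha_2|\le|(B_{1/2}(0)\cap\mathcal C)\setminus((0,\alpha_2)+K)|$ for $\alpha_2$ with nonpositive last coordinate. Take $n=2$, $k=0$, $\mathcal C=\{x_2>|x_1|\}$ and $\alpha_2=(0,-t)$ with $t>0$ small: for any $x\in B_{1/2}(0)\cap\mathcal C$ one has $x+(0,t)\in\mathcal C$ (the cone is invariant under upward translation) and $|x+(0,t)|\le \tfrac12+t<1$, so $B_{1/2}(0)\cap\mathcal C\subset K-(0,t)$ and the left-hand set is \emph{empty}. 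Translating $K$ by a vector with $(\alpha_2)_n\le 0$ pushes mass \emph{out of the cone} near the vertex; it does not uncover a region of volume $\sim|\alpha_2|$ inside $B_{1/2}(0)\cap\mathcal C$ (that phenomenon belongs to the opposite case $\alpha\in\mathbb R^n_+$, treated in Proposition \ref{21} via \eqref{geo1}). There is a second, independent problem: to run step (1) you assume $|\alpha_2|$ small and justify this by invoking Proposition \ref{29}; but Proposition \ref{29} is proved \emph{using} Lemma \ref{19} (its first line disposes of the case $\alpha\in\mathbb R^n_-$ by citing this very lemma), so the argument is circular, and \eqref{16a} alone does not force $|\alpha_2|$ small (the paper explicitly flags this as a nontrivial point).

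The correct argument is the one you glimpse mid-paragraph and then abandon, and it needs neither the relative isoperimetric inequality nor any a priori smallness of $|\alpha_2|$. By \eqref{geo2}, the displaced mass $((0,\alpha_2)+\tilde K)\setminus\tilde K$ coincides with $((0,\alpha_2)+\tilde K)\setminus\mathcal C$, and since $E-(\alpha_1,0)\subset\mathcal C$ this set is disjoint from $E-(\alpha_1,0)$; as $\tilde K\subset K$, its measure is at most $|((0,\alpha_2)+K)\setminus(E-(\alpha_1,0))|=|(\alpha+K)\setminus E|\le C_{\ref{l0}}(n,K)\sqrt{\mu(E)}$ by \eqref{16a}. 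On the other hand Lemma \ref{18} gives $\tfrac12\min\{c_{\ref{18}},C_{\ref{18}}|\alpha_2|\}\le|((0,\alpha_2)+\tilde K)\setminus\tilde K|$ (the symmetric difference of a set and its translate is twice either one-sided difference). Choosing $c_{\ref{19}}(n,K)$ so small that the constant branch of the minimum is excluded yields $|\alpha_2|\le C_{\ref{19}}(n,K)\sqrt{\mu(E)}$ directly, with the dichotomy in Lemma \ref{18} doing exactly the work you tried to outsource to Proposition \ref{29}.
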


\begin{proof}
Let $\tilde{K}\subset \mathcal{C}$ be the bounded, convex set given by Lemma \ref{geo}. An application of Lemma $\ref{18}$ and (\ref{geo2}) yields
\begin{align*}
\frac{1}{2} \min\{c_{\ref{18}}(n,\tilde{K}), &C_{\ref{18}}(n,\tilde{K})|\alpha_2| \} \leq \frac{1}{2} |((0, \alpha_2) + \tilde{K}) \Delta \tilde{K}| \\
&=|((0,\alpha_2) + \tilde{K}) \setminus \tilde{K}|=|((0, \alpha_2) + \tilde{K}) \setminus \mathcal{C}|
\end{align*}
Now, note that $E-(\alpha_1,0) \subset \mathcal{C}=\mathbb{R}^k \times \mathcal{\tilde{C}}$; hence, by using this fact and (\ref{16a}) we obtain
\begin{align*}
|((0, \alpha_2) + \tilde{K}) \setminus \mathcal{C}| &\leq |((0,\alpha_2) + \tilde{K}) \setminus (E-(\alpha_1,0))| \leq |(\alpha + K) \setminus E| \\
&\leq C_{\ref{l0}}(n,K) \sqrt{\mu(E)}. 
\end{align*}
Therefore, there exists $c_{\ref{19}}(n,K)>0$ such that for $\mu(E) \leq c_{\ref{19}}(n,K)$, $$ \frac{1}{2}C_{\ref{18}}(n,\tilde{K})|\alpha_2| \leq C_{\ref{l0}}(n,K) \sqrt{\mu(E)}.$$ Thus, the result follows with $C_{\ref{19}}(n,K)= \frac{2C_{\ref{l0}}(n,K)}{C_{\ref{18}}(n,\tilde{K})}$ (note that $K$ completely determines $\tilde{K}$).    
\end{proof}

\subsection{Proof of the result when $|\alpha_2(E)|$ is small}
\begin{prop} \label{21}
Let $E \in \mathcal{D}$ with $|E|=|K|$, and let $\alpha=\alpha(E)=(\alpha_1,\alpha_2) \in \mathbb{R}^k \times \mathbb{R}^{n-k}$ be as in Lemma \ref{l0}. Then there exist positive constants $c_{\ref{21}}(n,K)$, $\tilde{c}_{\ref{21}}(n,K)$, and $C_{\ref{21}}(n,K)$ such that if $\mu(E) \leq c_{\ref{21}}(n,K)$ and $|\alpha_2| \leq \tilde{c}_{\ref{21}}(n,K)$, then $|\alpha_2| \leq C_{\ref{21}}(n,K) \sqrt{\mu(E)}$.   
\end{prop}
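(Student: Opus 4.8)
The plan is to carry out rigorously the heuristic in Section 1.3, working throughout with the regularized set $G$ and its dilate $rG$ supplied by Lemmas \ref{21a}, \ref{15} and \ref{l0}; in particular we assume from the outset that $\mu(E)\le c_{\ref{21a}}(n)$, which also makes $\alpha(E)$ well defined. Write $\hat\alpha=\hat\alpha(E)=(\hat\alpha_1,\hat\alpha_2)\in\mathbb{R}^k\times\mathbb{R}^{n-k}$ as in Lemma \ref{15}, so that $\alpha=\hat\alpha/r$. Since $G\subset E$ and $|E|=|K|=|rG|$ force $r^n=|K|/|G|\ge1$, while \eqref{l3} gives $r\le 1+2c_{\ref{21a}}(n)/k(n)$, the quantities $|\alpha_2|$ and $|\hat\alpha_2|=r|\alpha_2|$ are comparable up to a dimensional constant; hence the hypothesis $|\alpha_2|\le\tilde c_{\ref{21}}(n,K)$ can be used to force $|\hat\alpha_2|<1/4$ and any further smallness of $|\hat\alpha_2|$ needed below. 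The first move is to translate: set $F:=rG-(\hat\alpha_1,0)$. Because $\mathcal{C}=\mathbb{R}^k\times\tilde{\mathcal{C}}$ is invariant under translations in the first $k$ coordinates, $F\subset\mathcal{C}$ and $\mathcal{F}F\cap\mathcal{C}=(\mathcal{F}(rG)\cap\mathcal{C})-(\hat\alpha_1,0)$, so a change of variables in Lemma \ref{15} and in \eqref{l2} gives
\[
\int_{\mathcal{F}F\cap\mathcal{C}}\bigl|1-|x-(0,\hat\alpha_2)|\bigr|\,dH^{n-1}\le C_{\ref{15}}(n,K)\sqrt{\mu(E)},\qquad |F\,\Delta\,((0,\hat\alpha_2)+K)|\le C_{\ref{l0}}(n,K)\sqrt{\mu(E)}.
\]

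Next I would turn the first bound into a smallness statement for the perimeter of $F$ near the origin. Every $x$ with $|x-(0,\hat\alpha_2)|<3/4$ satisfies $\bigl|1-|x-(0,\hat\alpha_2)|\bigr|>1/4$, so restricting the integral to $B_{3/4}((0,\hat\alpha_2))$ and using $P(F|U)=H^{n-1}(\mathcal{F}F\cap U)$ for the open set $U=B_{3/4}((0,\hat\alpha_2))\cap\mathcal{C}$ yields $P(F|B_{3/4}((0,\hat\alpha_2))\cap\mathcal{C})\le 4C_{\ref{15}}(n,K)\sqrt{\mu(E)}$. Since $|\hat\alpha_2|<1/4$ implies $B_{1/2}(0)\cap\mathcal{C}\subset B_{3/4}((0,\hat\alpha_2))\cap\mathcal{C}$, the same bound holds for $P(F|B_{1/2}(0)\cap\mathcal{C})$. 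Now I apply the relative isoperimetric inequality in the bounded open convex set $B_{1/2}(0)\cap\mathcal{C}$ (see \cite[Inequality (3.43)]{AFP}); using $|B_{1/2}(0)\cap\mathcal{C}|\le|B_{1/2}(0)|<1$ to drop the exponent $\tfrac{n-1}{n}$, there is $c(n,K)>0$ with
\[
\min\bigl\{|F\cap(B_{1/2}(0)\cap\mathcal{C})|,\ |(B_{1/2}(0)\cap\mathcal{C})\setminus F|\bigr\}\le \frac{4C_{\ref{15}}(n,K)}{c(n,K)}\sqrt{\mu(E)}.
\]
By Lemma \ref{17} applied to $A=K$ (and $B_{1/2}(0)\cap\mathcal{C}\subset K$) one has $|(B_{1/2}(0)\cap\mathcal{C})\setminus((0,\hat\alpha_2)+K)|\le|((0,\hat\alpha_2)+K)\,\Delta\,K|\le C_{\ref{17}}(n,K)|\hat\alpha_2|$, so together with the $L^1$ estimate for $F$ the set $F$ fills more than half of $B_{1/2}(0)\cap\mathcal{C}$ once $\mu(E)$ and $|\hat\alpha_2|$ are small; hence the minimum above is attained by the complement term, and the triangle inequality for symmetric differences gives
\[
|(B_{1/2}(0)\cap\mathcal{C})\setminus((0,\hat\alpha_2)+K)|\le C_*(n,K)\sqrt{\mu(E)},\qquad C_*(n,K):=\frac{4C_{\ref{15}}(n,K)}{c(n,K)}+C_{\ref{l0}}(n,K).
\]

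It remains to bound $|\hat\alpha_2|$ above by this quantity. If the last coordinate $\alpha_n$ of $\alpha$ is $\le0$, i.e.\ $\alpha\in\mathbb{R}_-^n$, then shrinking $c_{\ref{21}}$ below $c_{\ref{19}}(n,K)$ lets me apply Lemma \ref{19} to get $|\alpha_2|\le C_{\ref{19}}(n,K)\sqrt{\mu(E)}$ directly. Otherwise $\alpha_n\ge0$, so the last coordinate of $\hat\alpha_2$ is $\ge0$, and I bring in the convex set $\tilde K\subset B_{1/2}(0)\cap\mathcal{C}$ of Lemma \ref{geo}. As $|\hat\alpha_2|<1/2$ forces $\tilde K\subset B_{1/2}(0)\subset(0,\hat\alpha_2)+B_1$, we get $\tilde K\setminus((0,\hat\alpha_2)+K)=\tilde K\setminus((0,\hat\alpha_2)+\mathcal{C})$, which by \eqref{geo1} equals $\tilde K\setminus((0,\hat\alpha_2)+\tilde K)$; applying Lemma \ref{18} to $A=\tilde K$ (using $|(0,\hat\alpha_2)+\tilde K|=|\tilde K|$) bounds its measure below by $\tfrac12\min\{c_{\ref{18}}(n,\tilde K),\,C_{\ref{18}}(n,\tilde K)|\hat\alpha_2|\}$, which equals $\tfrac12 C_{\ref{18}}(n,\tilde K)|\hat\alpha_2|$ once $|\hat\alpha_2|\le c_{\ref{18}}(n,\tilde K)/C_{\ref{18}}(n,\tilde K)$. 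Since $\tilde K\subset B_{1/2}(0)\cap\mathcal{C}$, combining this with the last display gives $|\hat\alpha_2|\le\bigl(2C_*(n,K)/C_{\ref{18}}(n,\tilde K)\bigr)\sqrt{\mu(E)}$, and $r\ge1$ then yields $|\alpha_2|=|\hat\alpha_2|/r\le C_{\ref{21}}(n,K)\sqrt{\mu(E)}$ with $C_{\ref{21}}(n,K):=\max\{C_{\ref{19}}(n,K),\,2C_*(n,K)/C_{\ref{18}}(n,\tilde K)\}$. One finally takes $c_{\ref{21}}(n,K)$ to be the minimum of $c_{\ref{21a}}(n)$, $c_{\ref{19}}(n,K)$, $1$ and the finitely many explicit thresholds on $\mu(E)$ above, and $\tilde c_{\ref{21}}(n,K)$ small enough that $|\hat\alpha_2|=r|\alpha_2|$ meets all the smallness conditions on $|\hat\alpha_2|$ used in the argument.

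The genuinely delicate point is the use of the relative isoperimetric inequality in $B_{1/2}(0)\cap\mathcal{C}$: it is informative only after one knows its minimum is realized by the complement term, and securing that forces $(0,\hat\alpha_2)+K$ — hence $F$ — to already occupy most of $B_{1/2}(0)\cap\mathcal{C}$, which is exactly where the a priori hypothesis $|\alpha_2|\le\tilde c_{\ref{21}}(n,K)$ is consumed; removing it is the subject of Proposition \ref{29}. The other subtle ingredient, trading the symmetric difference against the cone for one against the bounded convex set $\tilde K$ (which is what makes $|\hat\alpha_2|$ reappear as a lower bound), is precisely where the line-free hypothesis on $\tilde{\mathcal{C}}$ enters, but it is already packaged in Lemma \ref{geo}.
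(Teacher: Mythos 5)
Your proposal is correct and follows essentially the same route as the paper's proof: translate $rG$ by $(\hat{\alpha}_1,0)$, convert the Sobolev--Poincar\'e estimate of Lemma \ref{15} into a perimeter bound in $B_{3/4}((0,\hat{\alpha}_2))\cap\mathcal{C}$, apply the relative isoperimetric inequality in $B_{1/2}(0)\cap\mathcal{C}$, identify the minimum with the complement term, and close the loop with Lemmas \ref{geo} and \ref{18} (after reducing to the $\mathbb{R}_+^n$ case via Lemma \ref{19}). The only differences are cosmetic: you keep $\hat{\alpha}_2$ throughout the final step (avoiding the paper's $|\alpha_2-\hat{\alpha}_2|=|\alpha_2|(r-1)$ correction) and you make explicit the inclusion $\tilde{K}\subset(0,\hat{\alpha}_2)+B_1$ needed to pass from $K$ to $\mathcal{C}$ in Lemma \ref{geo}, which the paper leaves implicit.
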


\begin{proof} 
Thanks to Lemma $\ref{19}$, we may assume without loss of generality that $\alpha \in \mathbb{R}_{+}^n$. Let $\tilde{G}:= rG-(\hat{\alpha}_1,0)$, with $G$ as in Lemma \ref{21a} and $r>0$ such that $|rG|=|K|$. By Lemma \ref{15} and the fact that $\mathcal{C}=\mathbb{R}^k \times \mathcal{\tilde{C}}$,
\begin{align*}
C_{\ref{15}}(n,K) \sqrt{\mu(E)} &\geq  \int_{\mathcal{F} (rG) \cap \mathcal{C}} |1-|x-\hat{\alpha}|| dH^{n-1}(x)\\
&=  \int_{\mathcal{F} \tilde{G} \cap \mathcal{C}} |1-|x-(0,\hat{\alpha}_2)|| dH^{n-1}(x)\\
&\geq \int_{\mathcal{F} \tilde{G} \cap \mathcal{C} \cap \{|1-|x-(0,\hat{\alpha}_2)|| \geq \frac{1}{4} \}} |1-|x-(0,\hat{\alpha}_2)|| dH^{n-1}(x) \\
&\geq \frac{1}{4} H^{n-1}\big(\mathcal{F} \tilde{G} \cap \mathcal{C} \cap \{|1-|x-(0,\hat{\alpha}_2)|| \geq \frac{1}{4} \}\big)\\
&\geq \frac{1}{4}H^{n-1}(\mathcal{F} \tilde{G} \cap (B_{\frac{3}{4}}((0,\hat{\alpha}_2)) \cap \mathcal{C}) ) = \frac{1}{4} P(\tilde{G}| B_{\frac{3}{4}}((0,\hat{\alpha}_2)) \cap \mathcal{C}).
\end{align*}
However, thanks to (\ref{l3}), $\frac{|\hat{\alpha}_2|}{1+\frac{2}{k(n)}\mu(E)} \leq |\alpha_2|$, so for $|\alpha_2|$ and $\mu(E)$ sufficiently small we have $B_{\frac{1}{2}}(0) \cap \mathcal{C} \subset B_{\frac{3}{4}}((0,\hat{\alpha}_2)) \cap \mathcal{C}$, and this implies $$P(\tilde{G}| B_{\frac{3}{4}}((0, \hat{\alpha}_2)) \cap \mathcal{C})\geq P(\tilde{G}| B_{\frac{1}{2}}(0) \cap \mathcal{C}).$$ Next, by using the relative isoperimetric inequality (apply \cite[Inequality (3.41)]{AFP} to $\mathbf{1}_{(rG)}$ and the set $B_{\frac{1}{2}}(0) \cap \mathcal{C}$), we have that for $\mu(E)$ small enough,  
\begin{align} 
C_{\ref{15}}&(n,K) \sqrt{\mu(E)} \nonumber \\
&\geq \frac{1}{4} c(n,K) \min\big\{|\tilde{G} \cap (B_{\frac{1}{2}}(0) \cap \mathcal{C})|^{\frac{n-1}{n}}, |(B_{\frac{1}{2}}(0) \cap \mathcal{C}) \setminus \tilde{G}|^{\frac{n-1}{n}}\big\} \nonumber\\
&\geq \frac{1}{4} c(n,K) \min\big\{|\tilde{G} \cap (B_{\frac{1}{2}}(0) \cap \mathcal{C})|, |(B_{\frac{1}{2}}(0) \cap \mathcal{C}) \setminus \tilde{G}|\big\}. \label{21e}
\end{align}
Furthermore, 
\begin{align*}
(B_{\frac{1}{2}}(0) \cap \mathcal{C}) \setminus \tilde{G} &\subset K \setminus \tilde{G} \subset \tilde{G} \Delta K\\
&\subset \bigg(\big(rG-(\hat{\alpha}_1,0)\big) \Delta \big(K+(0, \hat{\alpha}_2)\big)\bigg) \cup \bigg(\big(K+ (0, \hat{\alpha}_2)\big) \Delta K\bigg),
\end{align*}
and by using (\ref{l2}), Lemma \ref{17}, and (\ref{l3}), 
\begin{align*}
|(B_{\frac{1}{2}}(0) \cap \mathcal{C}) \setminus \tilde{G}| &\leq C_{\ref{l0}}(n,K)\sqrt{\mu(E)} + C_{\ref{17}}(n,K)|\hat{\alpha}_2|\\
&\leq C_{\ref{l0}}(n,K)\sqrt{\mu(E)} + C_{\ref{17}}(n,K)\bigg(1+\frac{2}{k(n)}\mu(E)\bigg)|\alpha_2|.
\end{align*}
Therefore, we can select $\tilde{c}_{\ref{21}}(n,K), c_{\ref{21}}(n,K)>0$ such that if $\mu(E) \leq c_{\ref{21}}(n,K)$ and $|\alpha_2| \leq \tilde{c}_{\ref{21}}(n,K)$, then 
\begin{equation*} \label{21g}
\min\big\{|\tilde{G} \cap (B_{\frac{1}{2}}(0) \cap \mathcal{C})|, |(B_{\frac{1}{2}}(0) \cap \mathcal{C}) \setminus \tilde{G}| \big\} =  |(B_{\frac{1}{2}}(0) \cap \mathcal{C}) \setminus \tilde{G}|.
\end{equation*}
Thus, using (\ref{21e}) we obtain  
\begin{equation} \label{21h} 
\frac{1}{4} c(n, K) |(B_{\frac{1}{2}}(0) \cap \mathcal{C}) \setminus \tilde{G}| \leq C_{\ref{15}}(n,K) \sqrt{\mu(E)}. \\
\end{equation}
Hence, by (\ref{21h}), (\ref{l2}), and Lemma \ref{17} it follows that   
\begin{align}
\big|\big(&B_{\frac{1}{2}}(0) \cap \mathcal{C}\big) \setminus \big((0,\alpha_2) + K\big)\big| \nonumber \\
&\leq |(B_{\frac{1}{2}}(0) \cap \mathcal{C}) \setminus \tilde{G}| + \big|\tilde{G} \setminus \big((0,\alpha_2)  + K\big)\big| \nonumber\\
& \leq |(B_{\frac{1}{2}}(0) \cap \mathcal{C}) \setminus \tilde{G}| + \big|\tilde{G} \Delta \big((0,\hat{\alpha}_2) + K\big)\big|+\nonumber\\
&\hskip 2in\big|\big((0,\hat{\alpha}_2) + K\big)\Delta \big((0,{\alpha}_2) + K\big)\big| \nonumber\\
&\leq \frac{4C_{\ref{15}}(n,K)}{c(n,K)} \sqrt{\mu(E)} + |(rG) \Delta (\hat{\alpha}+K)|+C_{\ref{17}}(n,K)|\alpha_2-\hat{\alpha}_2|\nonumber \\
&\leq \frac{4C_{\ref{15}}(n,K)}{c(n,K)} \sqrt{\mu(E)} + C_{\ref{l0}}(n,K) \sqrt{\mu(E)}+C_{\ref{17}}(n,K)|\alpha_2-\hat{\alpha}_2| \label{zz2}.
\end{align}
But $|\alpha_2-\hat{\alpha}_2|=|\alpha_2|(r-1)$, and from (\ref{l3}) it readily follows that $|\alpha_2-\hat{\alpha}_2|\leq |\alpha_2| \frac{2}{k(n)}\mu(E) \leq  \tilde{c}_{\ref{21}}(n,K) \frac{2}{k(n)}\mu(E).$ Combining this fact with (\ref{zz2}) yields a positive constant $\tilde{C}(n,K)$ such that   
\begin{equation} \label{z2}
\big|(B_{\frac{1}{2}}(0) \cap \mathcal{C}) \setminus \big((0,\alpha_2) + K\big)\big|\leq \tilde{C}(n,K) \sqrt{\mu(E)}.
\end{equation}   
Next, let $\tilde{K}\subset B_{\frac{1}{2}}(0) \cap \mathcal{C}$ be the bounded, convex set given by Lemma \ref{geo}. We note that since $\alpha \in \mathbb{R}_{+}^n$, (\ref{geo1}) implies 
\begin{equation*} 
\tilde{K} \setminus \big((0,\alpha_2)+\tilde{K}\big)=\tilde{K} \setminus \big((0,\alpha_2)+K\big).
\end{equation*}
Therefore, using Lemma \ref{18} and (\ref{z2}) we have   
\begin{align*}
\min \{c_{\ref{18}}(n&,\tilde{K}), C_{\ref{18}}(n,\tilde{K})|\alpha_2| \} \\
&\leq \big|\big((0,\alpha_2)+\tilde{K}\big) \Delta \tilde{K}\big|=2\big|\tilde{K} \setminus \big((0,\alpha_2)+\tilde{K}\big)\big|\\
&=2\big|\tilde{K} \setminus \big((0,\alpha_2)+K\big)\big| \leq 2\big|(B_{\frac{1}{2}}(0) \cap \mathcal{C}) \setminus \big((0,\alpha_2)+K\big)\big| \\
&\leq 2\tilde{C}(n,K) \sqrt{\mu(E)}.  
\end{align*}
Thus, for $c_{\ref{21}}(n,K)$ sufficiently small we can take $C_{\ref{21}}(n,K)= \frac{2\tilde{C}(n,K)}{C_{\ref{18}}(n,\tilde{K})}$ to conclude the proof.     
\end{proof}

\begin{cor} \label{22}
Let $E \in \mathcal{D}$ with $|E|=|K|$, $c_{\ref{21}}(n,K)$ and $\tilde{c}_{\ref{21}}(n,K)$ be as in Proposition \ref{21}, and $\alpha=\alpha(E)=(\alpha_1,\alpha_2) \in \mathbb{R}^{k} \times \mathbb{R}^{n-k}$ be as in Lemma \ref{l0}. Then there exists a positive constant $C_{\ref{22}}(n,K)$ such that if $\mu(E) \leq c_{\ref{21}}(n,K)$ and $|\alpha_2| \leq \tilde{c}_{\ref{21}}(n,K)$, then $|(E-(\alpha_1,0)) \Delta K| \leq C_{\ref{22}}(n,K) \sqrt{\mu(E)}.$   
\end{cor}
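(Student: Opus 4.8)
The plan is to obtain Corollary \ref{22} as an essentially immediate consequence of Proposition \ref{21}, the displacement estimate (\ref{16a}) from Lemma \ref{l0}, and the elementary perimeter bound of Lemma \ref{17}; all of the substantive work has already been done, so what remains is a short bookkeeping argument.

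First I would apply Proposition \ref{21}: since the hypotheses $\mu(E) \leq c_{\ref{21}}(n,K)$ and $|\alpha_2| \leq \tilde{c}_{\ref{21}}(n,K)$ are precisely those of that proposition, we get $|\alpha_2| \leq C_{\ref{21}}(n,K)\sqrt{\mu(E)}$. Next I would unwind the translation in the first $k$ coordinates: writing $\alpha = (\alpha_1,\alpha_2)$ and $\tilde{E} := E - (\alpha_1,0)$, one has $\tilde{E} \Delta \big((0,\alpha_2)+K\big) = \big(E \Delta (\alpha+K)\big) - (\alpha_1,0)$ since $\Delta$ commutes with translation and $\alpha - (\alpha_1,0) = (0,\alpha_2)$, so by translation invariance of Lebesgue measure together with (\ref{16a}),
\begin{equation*}
\big|\tilde{E} \Delta \big((0,\alpha_2)+K\big)\big| = \big|E \Delta (\alpha+K)\big| \leq C_{\ref{l0}}(n,K)\sqrt{\mu(E)}.
\end{equation*}
Then Lemma \ref{17}, applied to the bounded set of finite perimeter $A = K$ and the vector $(0,\alpha_2)$, gives $\big|\big((0,\alpha_2)+K\big)\Delta K\big| \leq C_{\ref{17}}(n,K)|\alpha_2|$, which, combined with the bound on $|\alpha_2|$ from the first step, is at most $C_{\ref{17}}(n,K)C_{\ref{21}}(n,K)\sqrt{\mu(E)}$. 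Finally, the triangle inequality for the symmetric difference yields
\begin{align*}
\big|(E-(\alpha_1,0))\Delta K\big| &\leq \big|\tilde{E} \Delta \big((0,\alpha_2)+K\big)\big| + \big|\big((0,\alpha_2)+K\big)\Delta K\big| \\
&\leq \Big(C_{\ref{l0}}(n,K) + C_{\ref{17}}(n,K)C_{\ref{21}}(n,K)\Big)\sqrt{\mu(E)},
\end{align*}
so the statement holds with $C_{\ref{22}}(n,K) := C_{\ref{l0}}(n,K) + C_{\ref{17}}(n,K)C_{\ref{21}}(n,K)$.

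There is no genuine obstacle at this stage. The only points worth checking are that the hypotheses of the corollary match exactly those of Proposition \ref{21} (so that proposition can be invoked verbatim) and that translating $E$ by $(\alpha_1,0)$ is harmless, because $(\alpha_1,0)$ lies in the $\mathbb{R}^k$-factor along which $\mathcal{C} = \mathbb{R}^k \times \tilde{\mathcal{C}}$ is invariant; in particular no new information about the geometry of the cone is required here, since all of that was consumed in Proposition \ref{21} and Lemma \ref{19}. One could alternatively route the displacement estimate through (\ref{l2}), (\ref{l3}), and Lemma \ref{17} (as is done inside the proof of Proposition \ref{21}), but working directly from (\ref{16a}) is the cleanest.
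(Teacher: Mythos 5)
Your proof is correct and is essentially identical to the paper's: both apply Proposition \ref{21} to bound $|\alpha_2|$, use (\ref{16a}) together with translation invariance to control $|(E-(\alpha_1,0))\Delta((0,\alpha_2)+K)|$, apply Lemma \ref{17} to $K$ and $(0,\alpha_2)$, and conclude by the triangle inequality for the symmetric difference, arriving at the same constant $C_{\ref{22}}(n,K)=C_{\ref{l0}}(n,K)+C_{\ref{17}}(n,K)C_{\ref{21}}(n,K)$.
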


\begin{proof}
Note that by Proposition \ref{21} we obtain $|\alpha_2| \leq C_{\ref{21}}(n,K) \sqrt{\mu(E)}$. Next, by applying Lemma \ref{17}  and (\ref{16a}) we have 
\begin{align*}
|(E-(\alpha_1,0)) \Delta K| &\leq |E \Delta (\alpha+K)|+|((0,\alpha_2)+K) \Delta K| \\
&\leq C_{\ref{l0}}(n,K) \sqrt{\mu(E)}+ C_{\ref{17}}(n,K)|\alpha_2|\\ 
&\leq (C_{\ref{l0}}(n,K)+C_{\ref{17}}(n,K)C_{\ref{21}}(n,K))\sqrt{\mu(E)}.  
\end{align*}
Therefore, we may take $C_{\ref{22}}(n,K)=C_{\ref{l0}}(n,K)+C_{\ref{17}}(n,K)C_{\ref{21}}(n,K)$ to conclude the proof. 
\end{proof}

\subsection{Reduction step}
In Proposition \ref{29} below, we refine Corollary \ref{22}. Namely, we show that if $\mu(E)$ is small enough, then the assumption on the size of $\alpha_2$ is superfluous. However, to prove Proposition \ref{29} we need to reduce the problem to the case when $\alpha_2 \in \overline{\mathcal{\tilde{C}}} \subset \mathbb{R}^{n-k}$ (recall $\mathcal{C} = \mathbb{R}^k \times \mathcal{\tilde{C}}$). This is the content of Lemma \ref{20}. For arbitrary $y \in \mathbb{R}_{+}^{n-k} \setminus \mathcal{\tilde{C}}$, decompose $y$ as
\begin{equation} \label{18a}
y= y^c + y^p,
\end{equation}
where $y^c \in \partial \mathcal{\tilde{C}}$ is the closest point on the boundary of the cone $\mathcal{\tilde{C}}$ to $y$ and $y^p := y-y^c$ (see Figure 3). Note that $y^p$ is perpendicular to $y^c$. 
\begin{figure}[h!] 
\centering 
\includegraphics[width=.3 \textwidth]{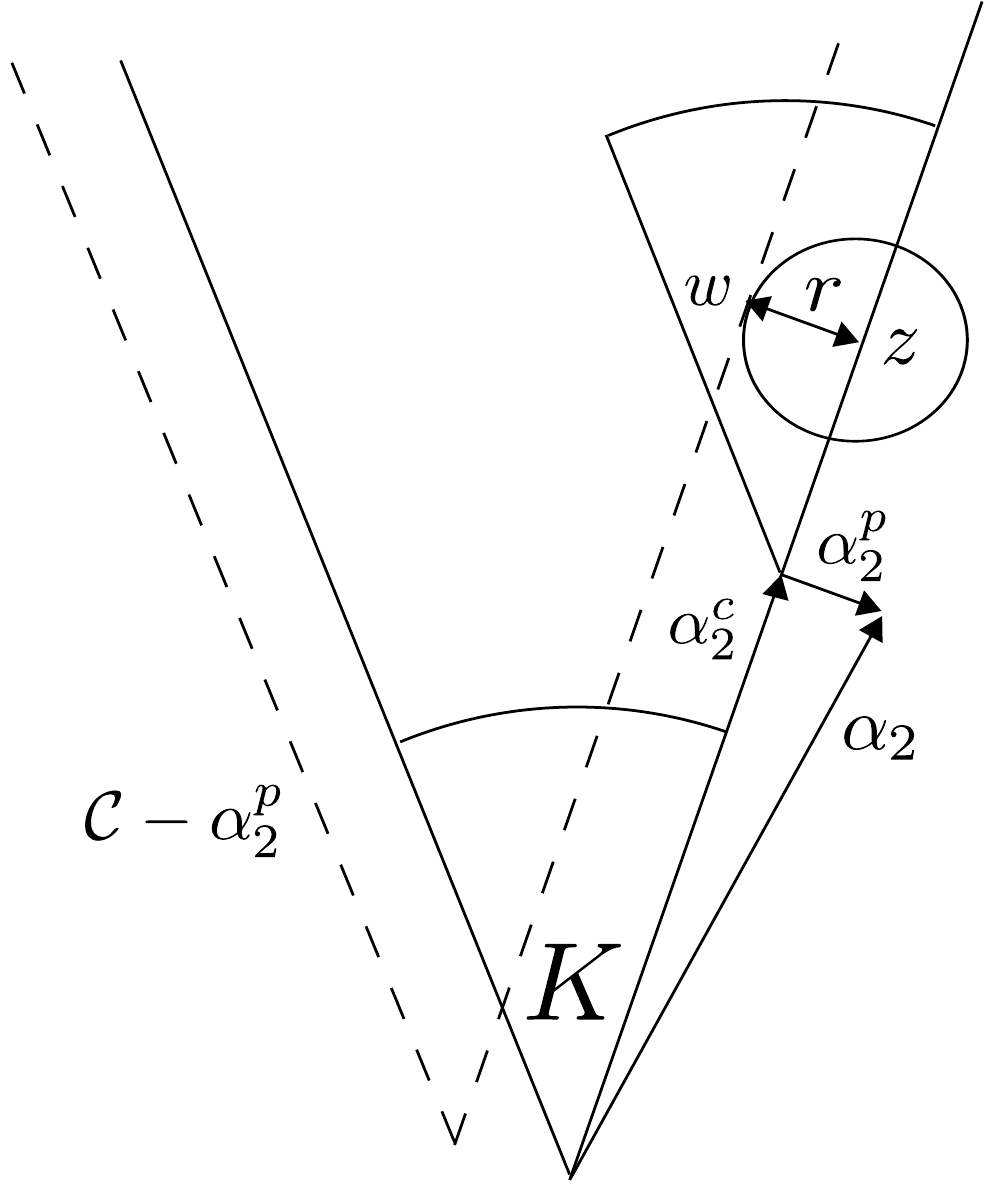}
\caption{Control of $\alpha_2^p$.}
\end{figure}
\begin{lem} \label{20} 
Let $E \in \mathcal{D}$ with $|E|=|K|$, and let $\alpha=\alpha(E)=(\alpha_1,\alpha_2) \in \mathbb{R}^k \times \mathbb{R}^{n-k}$ be as in Lemma \ref{l0}. There exist constants $c_{\ref{20}}(n,K), C_{\ref{20}}(n,K)>0$ such that if $\mu(E) \leq c_{\ref{20}}(n,K)$ and $\alpha_2 \in \mathbb{R}_{+}^{n-k} \setminus \mathcal{\tilde{C}}$, then
$|\alpha_2^p| \leq C_{\ref{20}}(n,K) \mu(E)^{\frac{1}{2n}}$. 
\end{lem}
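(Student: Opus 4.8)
The plan is to adapt the argument that proves Lemma~\ref{19}, but now working near a boundary ray of $\mathcal{\tilde{C}}$ rather than near the vertex. Write $\alpha=(\alpha_1,\alpha_2)$ and decompose $\alpha_2=\alpha_2^c+\alpha_2^p$ as in \eqref{18a}, so that $\alpha_2^c\in\partial\mathcal{\tilde{C}}$ and $\alpha_2^p$ is the ``outward'' component orthogonal to $\alpha_2^c$. The point is that translating $K$ by $(0,\alpha_2)$ pushes a definite chunk of $K$ \emph{out} of the cone $\mathcal{C}$, and the size of that chunk should be bounded below in terms of $|\alpha_2^p|$; meanwhile \eqref{16a} (equivalently, after subtracting $(\alpha_1,0)$, the estimate $|\tilde E\,\Delta\,((0,\alpha_2)+K)|\le C_{\ref{l0}}(n,K)\sqrt{\mu(E)}$ with $\tilde E=E-(\alpha_1,0)\subset\mathbb{R}^k\times\mathcal{\tilde{C}}$) forces that chunk to be small since $\tilde E$ lives inside $\mathcal{C}$. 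Concretely, $\bigl|((0,\alpha_2)+K)\setminus\mathcal{C}\bigr|\le |((0,\alpha_2)+K)\setminus\tilde E|\le |\tilde E\,\Delta\,((0,\alpha_2)+K)|\le C_{\ref{l0}}(n,K)\sqrt{\mu(E)}$, exactly as in the proof of Lemma~\ref{19}.

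Next I would establish the geometric lower bound $\bigl|((0,\alpha_2)+K)\setminus\mathcal{C}\bigr|\ge c(n,K)\,|\alpha_2^p|^{?}$ for some explicit power, and this is where the loss from $\sqrt{\mu}$ to $\mu^{1/(2n)}$ comes in. Unlike in Lemma~\ref{19}, where $\alpha_2\in\mathbb{R}_-^{n-k}$ so the translate $(0,\alpha_2)+\tilde K$ loses a full half-ball's worth of volume (giving a linear bound via Lemma~\ref{18}), here $\alpha_2$ only pokes slightly past the boundary face. The region $((0,\alpha_2)+K)\setminus\mathcal{C}$ is a thin slab of thickness comparable to $|\alpha_2^p|$ against a piece of $\partial\mathcal{\tilde{C}}$; however, because the cross-section of $K$ along that face can be very thin near the vertex (the cone $\mathcal{\tilde{C}}$ may be narrow), one does not get a clean $|\alpha_2^p|^{n-k}$ type bound from a single translate. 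Instead I would exploit that for $t\in[0,1]$ the sets $((0,t\alpha_2)+K)\setminus\mathcal{C}$ are nested (or can be compared), integrate/sum the defect over dyadic scales, or use a slicing argument: for a set of directions and heights along the face $\{\alpha_2^c/|\alpha_2^c|\}^\perp$, the one-dimensional sections of $((0,\alpha_2)+K)\setminus\mathcal{C}$ have length $\gtrsim |\alpha_2^p|$ on a set of measure $\gtrsim |\alpha_2^p|^{n-k-1}$, but one must be careful that $K$ actually contains enough mass there — this is what degrades the exponent to $1/(2n)$ rather than $1/2$. So the bound to aim for is of the form $c(n,K)\,|\alpha_2^p|^{n}\le\bigl|((0,\alpha_2)+K)\setminus\mathcal{C}\bigr|$, which combined with the upper bound $C_{\ref{l0}}(n,K)\sqrt{\mu(E)}$ gives $|\alpha_2^p|\le C_{\ref{20}}(n,K)\mu(E)^{1/(2n)}$.

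A cleaner route to that geometric inequality: since $\mathcal{\tilde{C}}$ is a convex cone containing no lines with $\partial\mathcal{\tilde{C}}\cap\{x_n=0\}=\{0\}$, there is a fixed ball $\tilde B_\rho(p)\subset\mathcal{\tilde{C}}$ with $\rho=\rho(n,K)>0$ and $p$ at unit distance from the vertex; scaling it down, $\mathcal{\tilde{C}}$ contains a ball of radius $\epsilon\rho$ centered at $\epsilon p$ for every $\epsilon\in(0,1)$, and $K$ (having a cone's worth of volume near the vertex) contains $\{0\}\times\tilde B_{\epsilon\rho/2}(\epsilon p)$ for $\epsilon$ small depending only on $K$. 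Translating by $(0,\alpha_2)$ and using that $\alpha_2^p$ points outward, the portion of $(0,\alpha_2)+(\{0\}\times\tilde B_{\epsilon\rho/2}(\epsilon p))$ lying outside $\mathcal{C}$ has volume $\ge c(n)(\epsilon\rho)^{n-k-1}\min\{|\alpha_2^p|,\epsilon\rho\}$; optimizing over $\epsilon$ with $\epsilon\rho\sim|\alpha_2^p|$ yields a bound $\gtrsim |\alpha_2^p|^{n-k}$, which after absorbing the dependence on how $n-k$ compares to $n$ and the crude estimates gives the stated $\mu(E)^{1/(2n)}$. The main obstacle I expect is precisely this geometric lower bound: controlling how much of $((0,\alpha_2)+K)$ escapes $\mathcal{C}$ uniformly, given that $\mathcal{\tilde{C}}$ may be arbitrarily thin, and tracking the exponent honestly so that the constant depends only on $n$ and $K$ (not on an a priori bound on $\alpha_2$). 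Everything else — the passage from $|\alpha_2^p|$ bound to the setup, the use of \eqref{16a}, and the reduction via $\tilde E\subset\mathcal{C}$ — mirrors the proofs of Lemmas~\ref{19} and~\ref{geo} already in hand.
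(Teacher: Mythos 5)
Your reduction to a purely geometric estimate is exactly the one the paper uses: from \eqref{16a} and $E\subset\mathcal{C}$ one gets
$|((0,\alpha_2)+K)\setminus\mathcal{C}|=|(\alpha+K)\setminus\mathcal{C}|\le|(\alpha+K)\setminus E|\le C_{\ref{l0}}(n,K)\sqrt{\mu(E)}$,
and the entire content of the lemma is then a lower bound $c(n,K)\,|\alpha_2^p|^{m}\le|((0,\alpha_2)+K)\setminus\mathcal{C}|$ for some exponent $m\le n$ (any such $m$ suffices, since $\mu(E)\le 1$). The gap is precisely in your proof of this lower bound, and it is the step you yourself flag as the main obstacle. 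Your ``cleaner route'' does not work for thin cones: let $\hat n=\alpha_2^p/|\alpha_2^p|$, so that $\{\langle\hat n,\cdot\rangle=0\}$ supports $\mathcal{\tilde{C}}$ at $\alpha_2^c$ and $\mathcal{\tilde{C}}\subset\{\langle\hat n,\cdot\rangle\le 0\}$; a point can only leave $\mathcal{\tilde{C}}$ under the translation by $\alpha_2^p$ if it starts at depth less than $|\alpha_2^p|$ below this hyperplane. The center $\alpha_2^c+\epsilon p$ of your ball sits at depth $-\epsilon\langle\hat n,p\rangle$, which is at least $\epsilon\,{\rm dist}(p,\partial\mathcal{\tilde{C}})\ge\epsilon\rho$ but can be as large as $\epsilon|p|=\epsilon=|\alpha_2^p|/\rho$ with your choice $\epsilon\rho\sim|\alpha_2^p|$. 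For a narrow cone ($\rho$ small) this is far deeper than the translation length $|\alpha_2^p|$, so the translated ball remains inside $\{\langle\hat n,\cdot\rangle<0\}$ and indeed inside $\mathcal{\tilde{C}}$: no volume escapes, and the claimed lower bound $c(n)(\epsilon\rho)^{n-k-1}\min\{|\alpha_2^p|,\epsilon\rho\}$ is simply false. (There is also the minor issue that $\{0\}\times\tilde{B}_{\epsilon\rho/2}(\epsilon p)$ has zero $n$-dimensional measure when $k\ge1$.)

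The correct constructions place the test region \emph{at the boundary}, not at an interior point like $\epsilon p$. The paper takes $z$ on the ray $\{(0,t\alpha_2^c):t>0\}\subset\partial\mathcal{C}$, namely the midpoint of the segment in which this ray meets $\overline{(0,\alpha_2^c)+K}$, so that $z\in\partial((0,\alpha_2^c)+K)\cap\partial\mathcal{C}$; with $r=|\alpha_2^p|$ the ball $B_r(z)$ is disjoint from $\mathcal{C}-(0,\alpha_2^p)$, while convexity of $K$ gives the density estimate $|B_r(z)\cap((0,\alpha_2^c)+K)|\ge c_0(n,K)r^n$, and since $|((0,\alpha_2^c)+K)\setminus(\mathcal{C}-(0,\alpha_2^p))|=|((0,\alpha_2)+K)\setminus\mathcal{C}|$ this yields $c_0|\alpha_2^p|^n\le C_{\ref{l0}}(n,K)\sqrt{\mu(E)}$ directly. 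Alternatively, your slab idea can be salvaged by working at the vertex of $K$ rather than at $\epsilon p$: $K$ contains the cone $\{tv:t\in[0,1],\,v\in B_\rho(q)\}$ over a fixed interior ball, every point of which at parameter $t\le|\alpha_2^p|/2$ lies within depth $|\alpha_2^p|$ of the supporting hyperplane, uniformly in $\hat n$; this set has volume $\ge c(n,K)|\alpha_2^p|^n$ and lands entirely in $\{\langle\hat n,\cdot\rangle>0\}$, hence outside $\mathcal{C}$, after the translation. Either way the exponent $n$ is what produces the $\mu(E)^{1/(2n)}$ in the statement; no dyadic summation or slicing is needed.
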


\begin{proof}
Firstly, observe that 
\begin{align*}
|((0,\alpha_2^c)+K) \setminus (\mathcal{C}-(0,\alpha_2^p))| &= |((0,\alpha_2)+K) \setminus \mathcal{C}|\\
&\leq |(\alpha+K) \setminus (\mathcal{C}+(\alpha_1,0))|=|(\alpha+K) \setminus \mathcal{C}|\\
&\leq |(\alpha+K) \setminus E| \leq C_{\ref{l0}}(n,K) \sqrt{\mu(E)}.
\end{align*}
Since $(0,\alpha_2^c) \in \partial \mathcal{C}$, it follows that $\partial ((0,\alpha_2^c)+K)$ has a nontrivial intersection with $\partial \mathcal{C}$. Let  $$z:=\frac{1}{2}\bigg((0, \alpha_2^c) + \big(0,\alpha_2^c \sup \{t >0: (0,t\alpha_2^c )\in \partial ((0,\alpha_2^c)+K)\}\big)\bigg),$$ and note that, by convexity, $z \in \partial ((0,\alpha_2^c)+K) \cap \partial \mathcal{C}.$ Next, pick $r=|\alpha_2^p|$. Observe that $r$ is the smallest radius for which $B_r(z) \cap \partial (\mathcal{C}-(0,\alpha_2^p)) \neq \emptyset$ so that it contains some $w \in \mathbb{R}^n$ (see Figure 3).
\noindent Since $\mathcal{C}$ is convex, there exists a constant $c_0(n, K)>0$ such that $|B_r(z) \cap ((0,\alpha_2^c)+K)| \geq c_0(n,K) r^n$. But $B_r(z) \cap ((0,\alpha_2^c)+K) \subset ((0,\alpha_2^c)+K) \setminus (C-(0,\alpha_2^p))$. Therefore, $r^n \leq \frac{C_{\ref{l0}}(n,K)}{c_0(n,K)}\sqrt{\mu(E)}$, and since $r=|\alpha_2^p|$ we have that $|\alpha_2^p| \leq \big(\frac{C_{\ref{l0}}(n)}{c_0(n,K)}\big)^{\frac{1}{n}} \mu(E)^{\frac{1}{2n}}$. 
\end{proof}
\begin{prop} \label{29}
Let $E \in \mathcal{D}$ with $|E|=|K|$, and let $\alpha=\alpha(E)=(\alpha_1,\alpha_2) \in \mathbb{R}^k \times \mathbb{R}^{n-k}$ be as in Lemma \ref{l0}. Then there exist $c_{\ref{29}}(n,K)>0$ such that if $\mu(E) \leq c_{\ref{29}}(n,K)$, then $|\alpha_2| \leq \tilde{c}_{\ref{21}}(n,K)$ with $\tilde{c}_{\ref{21}}(n,K)$ as in Proposition $\ref{21}$.   
\end{prop}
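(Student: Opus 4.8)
The plan is to argue by contradiction using a compactness/continuity-free strategy, but since Proposition \ref{29} only asks for a \emph{qualitative} smallness (``$|\alpha_2|$ is small provided $\mu(E)$ is small''), the cleanest route is to combine the already-established quantitative estimates of the previous subsections with Lemma \ref{19} and Lemma \ref{20}, treating the remaining ``intermediate'' regime separately. First I would dispose of the easy cases: if $\alpha_2 \in \mathbb{R}_-^n$ (more precisely $\alpha \in \mathbb{R}_-^n$), Lemma \ref{19} immediately gives $|\alpha_2| \le C_{\ref{19}}(n,K)\sqrt{\mu(E)}$, which is $\le \tilde c_{\ref{21}}(n,K)$ once $\mu(E)$ is small enough. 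So we may assume $\alpha \in \mathbb{R}_+^n$, and by the decomposition \eqref{18a} write $\alpha_2 = \alpha_2^c + \alpha_2^p$ with $\alpha_2^c \in \partial\tilde{\mathcal C}$ (or $\alpha_2 \in \tilde{\mathcal C}$, in which case $\alpha_2^p=0$); Lemma \ref{20} controls $|\alpha_2^p| \le C_{\ref{20}}(n,K)\mu(E)^{1/(2n)}$, which again is small for $\mu(E)$ small. Hence the real issue is to bound $|\alpha_2^c|$, i.e.\ the component of $\alpha_2$ \emph{along the cone} $\tilde{\mathcal C}$.

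The key step is therefore to rule out, quantitatively, the possibility that $\alpha_2^c$ is large while $\mu(E)$ is small. The idea is to run the volume-comparison argument that appears in the proof of Proposition \ref{21}, but now \emph{without} the a priori smallness of $\alpha_2$. Concretely, from \eqref{16a} we know $|E \Delta (\alpha+K)| \le C_{\ref{l0}}(n,K)\sqrt{\mu(E)}$, so $E$ is $L^1$-close to the translate $\alpha+K$; since $E \subset \mathcal{C} = \mathbb{R}^k\times\tilde{\mathcal C}$, the part of $\alpha+K$ sticking out of $\mathcal{C}$ must have volume $\le C_{\ref{l0}}(n,K)\sqrt{\mu(E)}$. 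If $|\alpha_2^c|$ were of order one (bounded below by $\tilde c_{\ref{21}}$) while $|\alpha_2^p|$ is tiny, then $(0,\alpha_2)+K$ is essentially $(0,\alpha_2^c)+K$, a unit-size ball centered near the boundary ray of $\tilde{\mathcal C}$ at distance $\ge \tilde c_{\ref{21}}$ from the origin. Because $\tilde{\mathcal C}$ contains no lines, the boundary $\partial\tilde{\mathcal C}$ is ``strictly on one side,'' so a ball of radius comparable to $\min\{1,|\alpha_2^c|\}$ centered on the ray $\{t\alpha_2^c/|\alpha_2^c|\}$ at height $|\alpha_2^c|$ pokes out of $\mathcal{C}$ by a definite volume $\ge c(n,K)>0$ (this is the same convexity/no-lines geometry used in Lemma \ref{geo} and the constant $b$ of \eqref{19z}: the cone opens at a fixed rate, so a unit ball centered at a point of the cone at distance $\ge \tilde c_{\ref{21}}$ along the axis direction always has a fixed fraction outside). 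This contradicts the bound $\le C_{\ref{l0}}(n,K)\sqrt{\mu(E)}$ once $\mu(E) \le c_{\ref{29}}(n,K)$ is small enough, forcing $|\alpha_2^c| \le \tilde c_{\ref{21}}(n,K)$, and combining with the bound on $|\alpha_2^p|$ finishes the proof.

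In writing this up I would first fix a geometric lemma: there is $c_*(n,K)>0$ such that for every $v \in \overline{\tilde{\mathcal C}}$ with $|v| \ge \tilde c_{\ref{21}}(n,K)$ one has $|((0,v)+K)\setminus \mathcal{C}| \ge c_*(n,K)$; this follows from the no-lines assumption on $\tilde{\mathcal C}$ together with a compactness argument over the (compact, after intersecting with an annulus) set of admissible $v$, exactly in the spirit of \eqref{19z}. Then, chaining the inequalities
\begin{align*}
c_*(n,K) &\le |((0,\alpha_2^c)+K)\setminus \mathcal C| \\
&\le |((0,\alpha_2)+K)\setminus \mathcal C| + |((0,\alpha_2^c)+K)\Delta ((0,\alpha_2)+K)| \\
&\le C_{\ref{l0}}(n,K)\sqrt{\mu(E)} + C_{\ref{17}}(n,K)|\alpha_2^p| \\
&\le C_{\ref{l0}}(n,K)\sqrt{\mu(E)} + C_{\ref{17}}(n,K)C_{\ref{20}}(n,K)\mu(E)^{\frac1{2n}},
\end{align*}
where the middle estimate uses Lemma \ref{17} and Lemma \ref{20}, and the bound $|((0,\alpha_2)+K)\setminus\mathcal C|\le|(\alpha+K)\setminus E|\le C_{\ref{l0}}(n,K)\sqrt{\mu(E)}$ exactly as at the start of the proof of Lemma \ref{20}. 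The right-hand side tends to $0$ as $\mu(E)\to0$, so choosing $c_{\ref{29}}(n,K)$ small enough that the right-hand side is $<c_*(n,K)$ yields a contradiction unless $|\alpha_2| = |\alpha_2^c + \alpha_2^p| \le |\alpha_2^c| + |\alpha_2^p| < \tilde c_{\ref{21}}(n,K) + C_{\ref{20}}(n,K)c_{\ref{29}}(n,K)^{1/(2n)}$, which is $\le \tilde c_{\ref{21}}(n,K)$ after relabeling constants (or one proves the bound for $|\alpha_2^c|$ against $\tilde c_{\ref{21}}/2$ and absorbs $|\alpha_2^p|$).

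I expect the main obstacle to be the geometric lemma quantifying ``a unit ball centered far out along the cone sticks out by a definite amount,'' and in particular making sure the no-lines hypothesis is used correctly: it is exactly what guarantees $\partial\tilde{\mathcal C}$ does not contain a full line through the relevant point, so that the ball cannot be tangent-from-inside along an entire affine subspace. One must also be careful that the constant $c_*(n,K)$ does not degenerate as $|\alpha_2^c|\to\infty$ — this is fine because $K\subset B_1$ is bounded, so for $|\alpha_2^c|$ large the ball is eventually \emph{entirely} outside $\mathcal C$ (or one simply notes the relevant configurations live in a fixed compact region and invokes continuity of $v\mapsto|((0,v)+K)\setminus\mathcal C|$, which is continuous by Lemma \ref{17}). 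The rest is bookkeeping with the constants already produced in Lemmas \ref{l0}, \ref{17}, \ref{19}, and \ref{20}.
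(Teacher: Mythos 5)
Your reduction to bounding $|\alpha_2^c|$ (via Lemma \ref{19} for $\alpha\in\mathbb{R}_-^n$ and Lemma \ref{20} for $|\alpha_2^p|$) matches the paper, but the key geometric lemma you propose for the remaining step is false, and this is fatal. You claim there is $c_*(n,K)>0$ with $|((0,v)+K)\setminus\mathcal{C}|\ge c_*(n,K)$ whenever $v\in\overline{\tilde{\mathcal{C}}}$ and $|v|\ge\tilde c_{\ref{21}}$. In fact this quantity is identically \emph{zero}: since $\mathcal{C}$ is a convex cone and $(0,v)\in\overline{\mathcal{C}}$, one has $(0,v)+\mathcal{C}\subset\mathcal{C}$, hence $(0,v)+K\subset(0,v)+\mathcal{C}\subset\mathcal{C}$. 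The paper uses exactly this containment (with $v=(1-\gamma)\alpha_2^c$) to conclude $|(((0,\alpha_2^c)-\beta)+K)\setminus\mathcal{C}|=0$ in Step 1 of its proof. Your intuition of ``a unit ball poking out of the cone'' fails because $K=B_1\cap\mathcal{C}$ is not a ball but is shaped to fit the cone; a copy of $K$ translated arbitrarily far along a direction in $\overline{\tilde{\mathcal C}}$ remains entirely inside $\mathcal{C}$. Note that Lemma \ref{20} works only because the \emph{perpendicular} component $\alpha_2^p$ pushes the translate out of the shifted cone $\mathcal{C}-(0,\alpha_2^p)$; the conical component $\alpha_2^c$ produces no excess volume at all, so $\mu(E)$ cannot ``see'' $\alpha_2^c$ through any set-difference-with-$\mathcal{C}$ argument. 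The first inequality of your displayed chain therefore reads $c_*\le 0$.

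The actual mechanism by which the deficit detects $\alpha_2^c$ is different: when $K$ is translated into the interior of the cone, the portion of $\partial K$ lying on $\partial\mathcal{C}$ (which is free, i.e.\ not counted by $P(\cdot|\mathcal{C})$) becomes boundary \emph{inside} $\mathcal{C}$ and is now counted, so the relative perimeter jumps by an amount of order one. Quantifying this requires the paper's three-step bootstrap: assuming $|\alpha_2^c|>\tfrac12\tilde c_{\ref{21}}$, one translates $\tilde E$ back toward the origin by $\beta=(0,\gamma\alpha_2^c)$ so that the new center sits at distance exactly $\tfrac14\tilde c_{\ref{21}}$, truncates to $\mathcal{C}$ and rescales to get a set $F$ with $|F|=|K|$ and $\mu(F)\le w(n,K)\,\mu(E)^{1/(2n)}$ (translating into the cone can only decrease relative perimeter), checks that $F$ satisfies the hypotheses of Proposition \ref{21}, and then derives a contradiction because Proposition \ref{21} forces $\alpha(F)$ to be $O(\mu(E)^{1/(4n)})$ while $F$ is demonstrably close to $((0,\alpha_2^c)-\beta)+K$, whose center has norm $\tfrac14\tilde c_{\ref{21}}$. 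The footnote in Section 1.3 warns precisely that no simple argument (even a compactness one) is known for this step; your proposal is an instance of the simple argument that does not work.
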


\begin{proof}
If $\alpha \in \mathbb{R}_{-}^n$, then the result follows from Lemma \ref{19}. If $\alpha \in \mathbb{R}_{+}^{n}$, then write $\alpha_2=\alpha_2^p+\alpha_2^c$ as in (\ref{18a}) with the understanding that $\alpha_2 \in \mathcal{\overline{\tilde{C}}}$ if and only if $\alpha_2^p=0$. In the case where $|\alpha_2^p|>0$ (i.e. $\alpha_2 \in \mathbb{R}_{+}^{n-k} \setminus \mathcal{\tilde{C}}$), thanks to Lemma \ref{20}, we have $|\alpha_2^p| \leq C_{\ref{20}}(n,K) \mu(E)^{\frac{1}{2n}}$. Therefore, it suffices to prove that for $\mu(E)$ sufficiently small, $|\alpha_2^c|\leq\frac{1}{2}\tilde{c}_{\ref{21}}(n,K)$. We split the proof into three steps. The idea is as follows: firstly, we assume by contradiction that $|\alpha_2^c|>\frac{1}{2}\tilde{c}_{\ref{21}}(n,K)$. This allows us to translate $E$ by a suitable vector $\beta$ so that $(E-\beta) \cap \mathcal{C}$ is a distance $\frac{1}{4}\tilde{c}_{\ref{21}}(n,K)$ from the origin (see Figure 4). 
\begin{figure}[h!] 
\centering 
\includegraphics[width=.9 \textwidth]{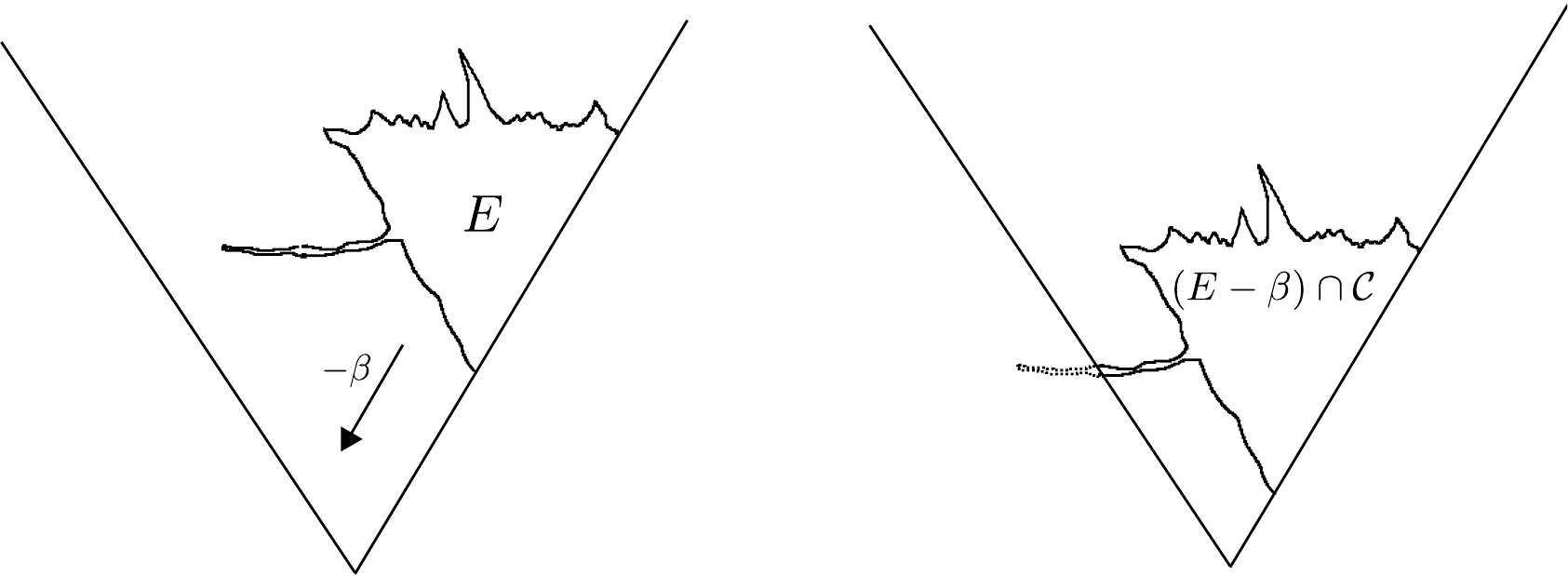}
\caption{If $E$ has small relative deficit but is far away from the origin, we can translate it a little bit and show that the resulting set -- thanks to Proposition \ref{21} -- should in fact be a lot closer to the origin.}
\end{figure}
The second step consists of showing that up to a small mass adjustment, the relative deficit of this new set is controlled by $\mu(E)^{\frac{1}{2n}}$. Lastly, we show that the new set satisfies the hypotheses of Proposition \ref{21}; therefore, we conclude that it should be a lot closer to the origin than it actually is. 
 
\noindent \textbf{Step 1.} Assume for contradiction that $|\alpha_2^c| > \frac{1}{2}\tilde{c}_{\ref{21}}(n,K)$.  Select $\gamma \in (0,1)$ such that for $\beta:=(0,\gamma \alpha_2^c) \in \mathcal{\bar{C}}$ we have $$|(0, \alpha_2^c)-\beta| =(1-\gamma)|\alpha_2^c|= \frac{1}{4}\tilde{c}_{\ref{21}}(n,K).$$  By (\ref{16a}), Lemma \ref{17}, and Lemma \ref{20},   
\begin{align*}
|E \Delta ((\alpha_1,\alpha_2^c)+K)| &\leq |E \Delta (\alpha+K)|+|(\alpha+K) \Delta ((\alpha_1,\alpha_2^c)+ K)| \\
&\leq C_{\ref{l0}}(n,K)\sqrt{\mu(E)} + C_{\ref{17}}(n,K)|\alpha_2^p|\\ 
&\leq C_{\ref{l0}}(n,K)\sqrt{\mu(E)} + C_{\ref{17}}(n,K)C_{\ref{20}}(n,K) \mu(E)^{\frac{1}{2n}}.
\end{align*}
Next, we set $\tilde{E}:=E-(\alpha_1,0)$ and $\tilde{C}(n,K):= C_{\ref{l0}}(n,K)+C_{\ref{17}}(n,K)C_{\ref{20}}(n,K)$ so that
\begin{equation} \label{29ca}
|\tilde{E} \Delta ((0,\alpha_2^c)+K)| \leq \tilde{C}(n,K) \mu(E)^{\frac{1}{2n}}.
\end{equation}
Let $F=t ((\tilde{E}-\beta) \cap \mathcal{C}))$ where $t \geq 1$ is chosen to satisfy $|F|=|\tilde{E}|$. Therefore, 
\begin{equation} \label{29b}
|F|=|\tilde{E}|=|\tilde{E}-\beta|=|(\tilde{E}-\beta) \cap \mathcal{C}|+|(\tilde{E}-\beta) \setminus \mathcal{C}|.
\end{equation}
Now let us focus on the second term on the right side of (\ref{29b}): using (\ref{29ca}),
\begin{align}
|(\tilde{E}-\beta) \setminus \mathcal{C}|&=|\tilde{E} \setminus (\mathcal{C}+\beta)| \nonumber\\
&\leq |\tilde{E} \setminus ((0,\alpha_2^c)+K)|+|((0,\alpha_2^c)+K) \setminus (\mathcal{C}+\beta)| \nonumber\\
&\leq \tilde{C}(n,K)\mu(E)^{\frac{1}{2n}}+|((0,\alpha_2^c)-\beta)+K) \setminus \mathcal{C}|. \label{29c}
\end{align}
But, $(0,\alpha_2^c) - \beta = (0,(1-\gamma)\alpha_2^c) \in \mathcal{\bar{C}}$, therefore, $((0,\alpha_2^c)-\beta)+K \subset \mathcal{C}$, and hence, $|(((0,\alpha_2^c)-\beta)+K) \setminus \mathcal{C}|=0$. Thus, combining the previous fact with (\ref{29b}) and (\ref{29c}),   
\begin{equation} \label{30}
|F|-|(\tilde{E}-\beta) \cap \mathcal{C}| \leq \tilde{C}(n,K)\mu(E)^{\frac{1}{2n}}.
\end{equation}

\noindent \textbf{Step 2.} From the definition of $F$ and $(\ref{30})$, we deduce $$(t^{n}-1)|(\tilde{E}-\beta) \cap \mathcal{C}| \leq \tilde{C}(n,K)\mu(E)^{\frac{1}{2n}},$$ so that for $\mu(E)^{\frac{1}{2n}} \leq \frac{|K|}{2\tilde{C}(n,K)}$, by $(\ref{30})$ again and the fact that $|F|=|K|$, 
\begin{equation} \label{fin5} 
t \leq \biggl(1+\frac{2\tilde{C}(n,K)}{|K|} \mu(E)^{\frac{1}{2n}} \biggr)^{\frac{1}{n}}.
\end{equation}
Since $\mathcal{C}$ is a convex cone, it follows that $\frac{1}{t} \mathcal{C}= \mathcal{C}$ and $\beta+\mathcal{C} \subset \mathcal{C}$. Thus,  
\begin{align}
P(F | \mathcal{C})&= t^{n-1}P\bigl( \tilde{E}| \beta+\mathcal{C} \bigr) \leq t^{n-1}P(\tilde{E}| \mathcal{C})=t^{n-1}P(E| \mathcal{C}+(\alpha_1,0)) \nonumber \\ 
&\leq \biggl(1+\frac{2\tilde{C}(n,K)}{|K|} \mu(E)^{\frac{1}{2n}} \biggr)^{\frac{n-1}{n}}P(E | \mathcal{C}) \nonumber \\
&\leq \biggl(1+\frac{2\tilde{C}(n,K)}{|K|} \mu(E)^{\frac{1}{2n}} \biggr)P(E | \mathcal{C}). \label{31c} 
\end{align}
Recall that $P(F| \mathcal{C})=H^{n-1}(\mathcal{F}F \cap \mathcal{C})$ and $P(E| \mathcal{C})=H^{n-1}(\mathcal{F}E \cap \mathcal{C})$ (see Section 2). Upon subtracting $P(B|\mathcal{C})$ from both sides of (\ref{31c}), dividing by $n|K|$ (recall $n|K|=H^{n-1}(\partial B_1 \cap \mathcal{C}))$, and using that $P(E | \mathcal{C})= n|K|\mu(E)+n|K|$ we have
\begin{align*}
\mu(F) &\leq \mu(E) + \frac{2\tilde{C}(n,K)}{n|K|^2} \mu(E)^{\frac{1}{2n}} P(E | \mathcal{C})\\
&= \mu(E) + \frac{2\tilde{C}(n,K)}{|K|} \mu(E)^{\frac{2n+1}{2n}} + \frac{2\tilde{C}(n,K)}{|K|}\mu(E)^{\frac{1}{2n}}.
\end{align*}
Let $w(n,K):= 1+ \frac{4\tilde{C}(n,K)}{|K|}$. Then, assuming without loss of generality that $\mu(E)\leq 1$,  
\begin{equation} \label{33}
\mu(F) \leq w(n,K) \mu(E)^{\frac{1}{2n}}.
\end{equation} 

\noindent \textbf{Step 3.} Using Lemma \ref{17} and (\ref{fin5}), for $\mu(E)$ small enough we have   
\begin{align}
|F &\Delta (((0,\alpha_2^c) - \beta) + K)| \nonumber \\
&\leq |F \Delta t(((0,\alpha_2^c) - \beta) + K)| + |t(((0,\alpha_2^c) - \beta) + K) \Delta (((0,\alpha_2^c) - \beta) + K)| \nonumber \\
&\leq t^n|(\tilde{E}-\beta) \cap \mathcal{C} \Delta (((0,\alpha_2^c) - \beta) + K)| \nonumber\\
&\hskip .5in  + |t(((0,\alpha_2^c) - \beta) + K) \Delta (t((0,\alpha_2^c) - \beta) + K)|\nonumber\\
&\hskip 1in +|(t((0,\alpha_2^c) - \beta) + K) \Delta (((0,\alpha_2^c) - \beta) + K)| \nonumber  \\
&\leq 2|(\tilde{E}-\beta) \cap \mathcal{C} \Delta (((0,\alpha_2^c) - \beta) + K)| + |(tK) \Delta K| \nonumber\\
&\hskip .5in +C_{\ref{17}}(n,K)|(0,\alpha_2^c)-\beta|(t-1) \nonumber \\
&\leq 2|(\tilde{E}-\beta) \cap \mathcal{C} \Delta (((0,\alpha_2^c) - \beta) + K)| + \bar{C}(n,K) \mu(E)^{\frac{1}{2n}} \label{30a}. 
\end{align}
Next, we claim
\begin{equation} \label{31}
|((\tilde{E}-\beta)\cap \mathcal{C}) \Delta (((0,\alpha_2^c)-\beta)+K)| \leq 2\tilde{C}(n,K) \mu(E)^{\frac{1}{2n}}.
\end{equation}
Indeed, from (\ref{29ca}) we deduce that 
\begin{align}
|((\tilde{E}-\beta)\cap &\mathcal{C}) \Delta (((0,\alpha_2^c) - \beta)+K)| \nonumber \\
&=\bigl | \bigl( ((\tilde{E}-\beta)\cap \mathcal{C})+\beta \bigr) \Delta ((0,\alpha_2^c) + K) \bigr | \nonumber \\
&\leq \bigl | \bigl( ((\tilde{E}-\beta)\cap \mathcal{C})+\beta \bigr) \Delta \tilde{E} \bigr| + \bigl | \tilde{E} \Delta ((0,\alpha_2^c) + K) \big | \nonumber \\
&\leq \big|\bigl( ((\tilde{E}-\beta)\cap \mathcal{C})+\beta \bigr) \Delta \tilde{E} \bigr| + \tilde{C}(n,K)\mu(E)^{\frac{1}{2n}}.\label{31d}
\end{align}
But since $((\tilde{E}-\beta)\cap \mathcal{C}) + \beta \subset \tilde{E}$,
\begin{align}
\bigl | \bigl ( ((\tilde{E}-\beta)\cap \mathcal{C})+\beta \bigr) \Delta \tilde{E} \bigr | &=\bigl | \tilde{E} \setminus \bigl( ((\tilde{E}-\beta)\cap \mathcal{C})+\beta \bigr) \bigr | \nonumber\\
&=\bigl | (\tilde{E}-\beta) \setminus (\tilde{E}-\beta)\cap \mathcal{C} \bigr |\nonumber\\
&=|(\tilde{E}-\beta) \setminus \mathcal{C}| =|\tilde{E} \setminus (\beta + \mathcal{C})|\nonumber\\ 
&\leq |\tilde{E} \setminus ((0,\alpha_2^c)+K)| + |((0,\alpha_2^c)+K) \setminus (\beta+\mathcal{C})|\nonumber\\
&\leq \tilde{C}(n,K)\mu(E)^{\frac{1}{2n}} + |(((0,\alpha_2^c) - \beta)+K) \setminus \mathcal{C}|. \label{31e}
\end{align}
As before, $|(((0,\alpha_2^c) - \beta)+K) \setminus \mathcal{C}|=0$ (since $((0,\alpha_2^c)-\beta)+K \subset \mathcal{C}$). Therefore, (\ref{31d}) and (\ref{31e}) imply the claim (i.e. (\ref{31})).
Furthermore, by using (\ref{30a}) and $(\ref{31})$, it follows that for some constant $\tilde{w}(n,K)$,     
\begin{equation} \label{31a}
|F \Delta (((0,\alpha_2^c) - \beta) + K)| \leq \tilde{w}(n,K)\mu(E)^{\frac{1}{2n}}.
\end{equation}
Next, let $\alpha(F)$ be the translation as in Lemma \ref{l0} for the set $F \subset \mathcal{C}$,  so that $|F \Delta (\alpha(F)+K)| \leq C_{\ref{l0}}(n,K) \sqrt{\mu(F)}$. 
By Lemma $\ref{18}$ and $(\ref{31a})$,  
\begin{align}
\min\{c_{\ref{18}}(n,K)&, C_{\ref{18}}(n,K)|((0,\alpha_2^c)-\beta)-\alpha(F)| \} \nonumber \\
&\leq |(((0,\alpha_2^c)-\beta)+K) \Delta (\alpha(F)+K)| \nonumber \\
&\leq |(((0,\alpha_2^c)-\beta)+K) \Delta F| + |F \Delta (\alpha(F)+K)| \nonumber \\
&\leq \tilde{w}(n,K)\mu(E)^{\frac{1}{2n}} + C_{\ref{l0}}(n,K)\sqrt{\mu(F)}. \label{dm}
\end{align}
Moreover, (\ref{33}) and (\ref{dm}) imply that if $\mu(E)$ is sufficiently small, then there exists a constant $w_2(n,K)$ so that
\begin{align}
|\alpha_2(F)|\leq |\alpha(F)| &\leq |(0,\alpha_2^c)-\beta|+w_2(n,K)\mu(E)^{\frac{1}{4n}}\nonumber\\
&= \frac{1}{4}\tilde{c}_{\ref{21}}(n,K)+w_2(n,K)\mu(E)^{\frac{1}{4n}} \label{33l}, 
\end{align}
and 
\begin{equation} \label{33ne}
|\alpha_1(F)| \leq w_2(n,K) \mu(E)^{\frac{1}{4n}}
\end{equation}     
(since $|\alpha_1(F)| \leq |((0,\alpha_2^c)-\beta)-\alpha(F)|$).
Furthermore, using (\ref{33l}) and (\ref{33}), we deduce that for $\mu(E)$ small enough $$|\alpha_2(F)| \leq \tilde{c}_{\ref{21}}(n,K), \hskip .1in \mu(F) \leq c_{\ref{21}}(n,K),$$ where $c_{\ref{21}}$ is as in Proposition \ref{21}. Thus, by applying Proposition \ref{21} to $F$ and using (\ref{33}) again, it follows that 
\begin{equation} \label{34}
|\alpha_2(F)| \leq C_{\ref{21}}(n,K) \sqrt{\mu(F)} \leq C_{\ref{21}}(n,K) \sqrt{w(n,K)} \mu(E)^{\frac{1}{4n}}.
\end{equation} 
Combining (\ref{dm}), (\ref{33ne}), and (\ref{34}) we obtain 
\begin{align*} 
\frac{1}{4}\tilde{c}_{\ref{21}}(n,K)&=|(0,\alpha_2^c)-\beta| \leq |\alpha(F)|+w_2(n,K)\mu(E)^{\frac{1}{4n}}\\
&\leq |\alpha_2(F)|+2w_2(n,K)\mu(E)^{\frac{1}{4n}}\\
&\leq \bigg(C_{\ref{21}}(n,K) \sqrt{w(n,K)} +2w_2(n,K)\bigg)\mu(E)^{\frac{1}{4n}},
\end{align*}
which is impossible if $\mu(E)$ is sufficiently small. This concludes the proof. 
\end{proof}

We are now in a position to prove Theorem \ref{t1}. Firstly, we assume that $|E|=|K|$. Indeed, let $c_{\ref{21}}$ and $c_{\ref{29}}$ be the constants given in Proposition \ref{21} and \ref{29}, respectively, and set $c(n,K):=\min\{c_{\ref{21}}(n,K), c_{\ref{29}}(n,K)\}$. If $\mu(E) \leq c(n,K)$, then it follows from Proposition \ref{29} and Corollary \ref{22} that $$\frac{|(E-(\alpha_1,0)) \Delta K|}{|K|} \leq \frac{C_{\ref{22}}(n,K)}{|K|} \sqrt{\mu(E)}.$$ Let $\bar{C}(n,K):=\frac{C_{\ref{22}}(n,K)}{|K|}$ and suppose now that $|E| \neq |K|$. Pick $t>0$ such that $|tE|=|K|$ and apply the previous estimate to the set $tE$ to obtain $$\frac{|(tE-(\alpha_1(tE),0)) \Delta K)|}{|tE|} \leq \bar{C}(n,K)\sqrt{\mu(tE)} = \bar{C}(n,K) \sqrt{\mu(E)},$$ and this implies $$\frac{|(E-(\frac{\alpha_1(tE)}{t},0)) \Delta (\frac{1}{t}K)|}{|E|} \leq \bar{C}(n,K)\sqrt{\mu(E)}.$$ Since $s=\frac{1}{t}$, this yields the theorem for the case when $\mu(E)\leq c(n,K).$ If $\mu(E) > c(n,K)$, then $$\frac{|E \Delta (sK)|}{|E|} \leq 2 \leq \frac{2}{\sqrt{c(n,K)}} \sqrt{\mu(E)}.$$ Therefore, we obtain the theorem with $C(n,K)=\min \bigg\{\bar{C}(n,K), \frac{2}{\sqrt{c(n,K)}} \bigg\}$.

\appendix 

\section{Proofs of the technical lemmas}

\begin{proof}[\bf{Proof of Lemma \ref{18}.}]
For $y \in \mathbb R^n$ let $$f(y):=|(A+y) \Delta A|.$$ Note that $f(y) = \int_{\mathbb{R}^n} |\mathbf{1}_{(A+y)}(x) - \mathbf{1}_{A}(x)| dx.$ 

Our strategy is as follows:
first, we show that the incremental ratios of $f$ at $y=0$ 
have a positive lower bound.
Then we prove that $f(y)$ is uniformly bounded away from zero when $y$ is away from zero.
These two facts readily yield the result. \\

\noindent \textbf{Step 1.} We claim there exists $s=s(n, A)>0$ such that  
\begin{equation} \label{r0}
C_{\ref{18}}(n,A):=\inf_{0 \leq |y| \leq s}\frac{f(y)}{|y|}>0 .
\end{equation} 
Indeed, let $y_k \in \mathbb{R}^n$ be any sequence converging to $0$.
By Lemma \ref{17}
we know that the family of measure $\mu_k$ defined by
$$
\mu_k:=\frac{\mathbf{1}_{A+y_k} - \mathbf{1}_{A}}{|y_k|} 
$$ 
satisfy $|\mu_k|(\mathbb{R}^n) \leq C_{\ref{17}}$.
Moreover,
up to choosing a subsequence (which we do not relabel) so that $y_k/|y_k| \to w$
for some $w \in \mathbb{S}^{n-1}$, it is immediate to check that $\mu_k$ converge weakly to 
$D \mathbf{1}_A \cdot w.$
Hence, by the lower semicontinuity of the total variation
(see for instance \cite[Corollary 1.60]{AFP}),
$$
\liminf_{k \to \infty} \frac{|(A+y_k) \Delta A|}{|y_k|} =
\liminf_{k \to \infty} |\mu_k|(\mathbb{R}^n) \geq |D \mathbf{1}_A \cdot w|(\mathbb{R}^n).
$$
We now observe that, again by the lower semicontinuity of the total variation,
the right hand side attains a minimum for some $\bar w \in \mathbb{S}^{n-1}$.
Hence, by the arbitrariness of $y_k$,
$$
\liminf_{|y| \to 0} \frac{f(y)}{|y|}=\liminf_{|y| \to 0} \frac{|(A+y) \Delta A|}{|y|} \geq |D \mathbf{1}_A \cdot \bar w|(\mathbb{R}^n).
$$
To conclude it suffices to observe that $|D \mathbf{1}_A \cdot \bar w|(\mathbb{R}^n)>0$,
as otherwise $A=A+t\bar w$ for any $t \in \mathbb{R}$ (up to sets of measure zero), which contradicts the boundedness of $A$.

\noindent \textbf{Step 2.}       
We claim that there exists $c_{\ref{18}}(n,A)>0$ such that
\begin{equation} \label{r0bis}
f(y) \geq c_{\ref{18}}(n,A) \qquad \forall\, |y| \geq s(n,A),
\end{equation} 
with $s(n,A)$ as in Step 1. The proof is by compactness: if $|y|\geq {\rm diam}(A)$ then $f(y) =2|A|>0$.
On the other hand, by continuity $f$ attains a minimum over the compact set
$\{y:s(n,A) \leq |y|\leq {\rm diam}(A)\}$. Let $\bar y$ be a vector where such
a minimum is attained.
Then this minimum cannot be zero as otherwise
$A=A+\bar y$ (up to a set of measure zero).
By iterating the estimate, this implies that $A=A+k\bar y$ for any $k \in \mathbb Z$,
contradicting again the boundedness of $A$.

\end{proof}

\begin{proof}[\bf{Proof of Lemma \ref{21a}.}]
Let $k(n)=\frac{2-2^{\frac{n-1}{n}}}{3}$. If $\mu(E) \leq \frac{k(n)^2}{8}:=c_{\ref{21a}}(n)$, then by \cite[Theorem 3.4]{Fi} there exists a set of finite perimeter $G \subset E$ satisfying 
\begin{equation} \label{lma}
|E \setminus G| \leq \frac{\delta_{K_0}(E)}{k(n)}|E|,
\end{equation}
\begin{equation} \label{lmb}
\tau(G) \geq 1 + \frac{m_{K_0}}{M_{K_0}}k(n). 
\end{equation}
We claim that $G$ is the desired set. Indeed, since $\delta_{K_0}(E) \leq \mu(E)$ (see Corollary \ref{8}), (\ref{lma}) and (\ref{lmb}) yield (\ref{23}) and (\ref{24}); therefore, it remains to prove (\ref{24d}). From the construction of $G$ in \cite[Theorem 3.4]{Fi}, we have $G=E\setminus F_{\infty}$, where $F_{\infty} \subset E$ is the maximal element given by \cite[Lemma 3.2]{Fi} that satisfies
\begin{equation} \label{24e}
P_{K_0}(F_{\infty}) \leq \bigg (1+\frac{m_{K_0}}{M_{K_0}}k(n) \bigg) \int_{\mathcal{F} F_{\infty} \cap \mathcal{F} E} ||\nu_E(x)||_{K_0*} dH^{n-1}(x).
\end{equation}
To prove (\ref{24d}), we first claim that for some positive constant $C(n,K)$, 
\begin{equation} \label{25}
H^{n-1}(\mathcal{F} G \cap \mathcal{C}) \leq H^{n-1}(\mathcal{F} E \cap \mathcal{C})+C(n,K)\mu(E).
\end{equation} 
Note from the definitions that 
\begin{equation} \label{25d}
P_{K_0}(E) = n|K|\delta_{K_0}(E)+n|K|^{\frac{1}{n}}|E|^{\frac{n-1}{n}}.
\end{equation}
Moreover, by \cite[Equation (2.10)]{Fi} and \cite[Equation (2.11)]{Fi} we may write $$P_{K_0}(G) = \int_{\mathcal{F} G \cap \mathcal{F} E} ||\nu_E(x)||_{K_0*} dH^{n-1}(x)+\int_{\mathcal{F} G \cap E^{(1)}} ||\nu_G(x)||_{K_0*} dH^{n-1}(x).$$ Therefore, 
\begin{align}
P_{K_0}(E)&= \int_{\mathcal{F} G \cap \mathcal{F} E} ||\nu_E(x)||_{K_0*} dH^{n-1}(x)+\int_{\mathcal{F} F_{\infty} \cap \mathcal{F} E} ||\nu_E(x)||_{K_0*} dH^{n-1}(x) \nonumber\\
&= P_{K_0}(G) - \int_{\mathcal{F} G \cap E^{(1)}} ||\nu_G(x)||_{K_0*} dH^{n-1}(x) \nonumber \\
&\hskip 2in +\int_{\mathcal{F} F_{\infty} \cap \mathcal{F} E} ||\nu_E(x)||_{K_0*} dH^{n-1}(x). \label{24f}  
\end{align}  
Next, we note that $\mathcal{F} F_{\infty} \cap E^{(1)}=\mathcal{F} G \cap E^{(1)},$ and by \cite[Lemma 2.2]{Fi}, $\nu_G=-\nu_{F_{\infty}}$ at $H^{n-1}$ -- a.e. point of $\mathcal{F} F_{\infty} \cap E^{(1)} .$ Furthermore, taking into account (\ref{10a}) and (\ref{24e}), we have
\begin{align}
\int_{\mathcal{F} G \cap E^{(1)}} ||\nu_G(x)||_{K_0*} &dH^{n-1}(x) \nonumber \\ 
&=\int_{\mathcal{F} F_{\infty} \cap E^{(1)}} ||-\nu_{F_{\infty}}(x)||_{K_0*} dH^{n-1}(x)\nonumber\\
&\leq \frac{M_{K_0}}{m_{K_0}} \int_{\mathcal{F} F_{\infty} \cap E^{(1)}} ||\nu_{F_{\infty}}(x)||_{K_0*} dH^{n-1}(x)\nonumber\\
&\leq \frac{M_{K_0}}{m_{K_0}} \frac{m_{K_0}}{M_{K_0}}k(n)  \int_{\mathcal{F} F_{\infty} \cap \mathcal{F}E} ||\nu_{F_{\infty}}(x)||_{K_0*} dH^{n-1}(x)\nonumber\\
&=k(n) \int_{\mathcal{F} F_{\infty} \cap \mathcal{F} E} ||\nu_{F_{\infty}}(x)||_{K_0*} dH^{n-1}(x). \label{24g}
\end{align}
Hence, (\ref{24f}) and (\ref{24g}) yield (observe that $\nu_E=\nu_{F_\infty}$ on $\mathcal{F} F_\infty \cap \mathcal{F} E$)
\begin{equation} \label{fin6}
P_{K_0}(E) \geq P_{K_0}(G)+(1-k(n))\int_{\mathcal{F} F_{\infty} \cap \mathcal{F} E} ||\nu_E(x)||_{K_0*} dH^{n-1}(x).
\end{equation} 
By the anisotropic isoperimetric inequality (see \cite[Theorem 2.3]{Fi} or (\ref{las2})), $$P_{K_0}(G) \geq n|K|^{\frac{1}{n}}|G|^{\frac{n-1}{n}}.$$ Moreover, by (\ref{lma}), 
\begin{equation} \label{fin7}
|E|-|G| \leq \frac{\mu(E)}{k(n)}|E|.
\end{equation}
Thus, 
\begin{align}
P_{K_0}(G)&\geq n|K|^{\frac{1}{n}}|G|^{\frac{n-1}{n}} \geq n|K|^{\frac{1}{n}}\biggl( |E|-\frac{\mu(E)}{k(n)}|E|\biggr)^{\frac{n-1}{n}}\nonumber\\
&\geq n|K|^{\frac{1}{n}}|E|^{\frac{n-1}{n}}\biggl(1-\frac{\mu(E)}{k(n)}\biggr).\label{24h} 
\end{align}
Combining (\ref{25d}), (\ref{fin6}), and (\ref{24h}) it follows that
\begin{align*}
n|K|\delta_{K_0}(E)+n|K|^{\frac{1}{n}}|E|^{\frac{n-1}{n}} &\geq n|K|^{\frac{1}{n}}|E|^{\frac{n-1}{n}}\biggl(1-\frac{\mu(E)}{k(n)}\biggr)\\
&+(1-k(n))\int_{\mathcal{F} F_{\infty} \cap \mathcal{F} E} ||\nu_E(x)||_{K_0*} dH^{n-1}(x).
\end{align*}
Therefore (recall $\delta_{K_0}(E) \leq \mu(E)$ and $|E|=|K|$), 
\begin{equation} \label{24i}
\int_{\mathcal{F} F_{\infty} \cap \mathcal{F} E} ||\nu_E(x)||_{K_0*} dH^{n-1}(x) \leq \frac{n|K|(1+k(n))}{k(n)(1-k(n))} \mu(E)
\end{equation}
Using the definition of $m_{K_0}$, (\ref{24g}), and (\ref{24i}) we obtain 
\begin{align*}
H^{n-1}(\mathcal{F} G \cap \mathcal{C})&= H^{n-1}(\mathcal{F} G \cap \mathcal{F} E \cap \mathcal{C})+H^{n-1}(\mathcal{F} G \cap E^{(1)})\\
&\leq H^{n-1}(\mathcal{F} E \cap \mathcal{C}) + H^{n-1}(\mathcal{F} G \cap E^{(1)})\\
&\leq H^{n-1}(\mathcal{F} E \cap \mathcal{C}) + \frac{1}{m_{K_0}}\int_{\mathcal{F} G \cap E^{(1)}} ||\nu_G(x)||_{K_0*} dH^{n-1}(x)\\  
&\leq H^{n-1}(\mathcal{F} E \cap \mathcal{C}) + \frac{1}{m_{K_0}}k(n) \int_{\mathcal{F} F_{\infty} \cap \mathcal{F} E} ||\nu_{F_{\infty}}(x)||_{K_0*} dH^{n-1}(x)\\
&\leq H^{n-1}(\mathcal{F} E \cap \mathcal{C}) + \frac{1}{m_{K_0}}\frac{n|K|(1+k(n))}{(1-k(n))} \mu(E), 
\end{align*}
and this proves our claim (i.e. (\ref{25})). Our next task is to use (\ref{25}) in order to prove (\ref{24d}), thereby finishing the proof of the lemma. Let $r>0$ be such that $|rG|=|E|$. Note that by (\ref{25}),  
\begin{align}
\mu(G)&=\mu(rG)= \frac{H^{n-1}(\mathcal{F} (rG) \cap \mathcal{C}) - H^{n-1}(\partial B_1 \cap \mathcal{C})}{H^{n-1}(\partial B_1 \cap \mathcal{C})}\nonumber\\
&= \frac{r^{n-1}H^{n-1}(\mathcal{F} G \cap \mathcal{C}) - H^{n-1}(\partial B_1 \cap \mathcal{C})}{H^{n-1}(\partial B_1 \cap \mathcal{C})}\nonumber\\
&\leq \frac{r^{n-1}\bigl(H^{n-1}(\mathcal{F} E \cap \mathcal{C})+C(n,K)\mu(E)\bigr) - H^{n-1}(\partial B_1 \cap \mathcal{C})}{H^{n-1}(\partial B_1 \cap \mathcal{C})}. \label{28k} 
\end{align}
But since $\mu(E) \leq \frac{k(n)^2}{8}$ and $k(n)\leq \frac{1}{2}$, we have $\frac{\mu(E)}{k(n)}\leq \frac{k(n)}{8}\leq \frac{1}{16}$ so that two applications of (\ref{fin7}) yield    
\begin{equation} \label{28a}
\frac{|K|}{|G|} \leq 1+ \frac{16}{15} \frac{\mu(E)}{k(n)} \leq 1+ \mu(E)\frac{2}{k(n)},
\end{equation} 
and by using $(\ref{28a})$ we have 
\begin{align}
r^{n-1}&= \biggl(\frac{|K|}{|G|}\biggr)^{\frac{n-1}{n}} \leq \biggl(1+ \mu(E)\frac{2}{k(n)}\biggr)^{\frac{n-1}{n}}\leq 1+ \mu(E)\frac{2(n-1)}{n k(n)}. \label{28l}   
\end{align} 
Upon combining (\ref{28k}) and (\ref{28l}), (\ref{24d}) follows easily.  
\end{proof}

\begin{proof}[\bf{Proof of Lemma \ref{l0}.}] 
Recall that by definition $c_{\ref{21a}}(n)=\frac{k^2(n)}{8}$, where $k(n)=\frac{2-2^{\frac{n-1}{n}}}{3}$. Since $\delta_{K_0}(E) \leq \mu(E)$, by taking $\mu(E) \leq c_{\ref{21a}}(n)$, $\delta_{K_0}(E)$ will be sufficiently small in order for us to assume the setup of \cite[Inequality (3.30)]{Fi}, with the understanding that the set $K$ in the equation corresponds to our $K_0$, and whenever $K$ appears in our estimates, it is the same set that we defined in the introduction (i.e. $K=B_1 \cap \mathcal{C}$). Note that in \cite[Proof of Theorem 1.1]{Fi} the authors dilate the sets $G$ and $E$ by the same factor $r>0$ so that $|rG|=|K_0|=|K|$; however, they still denote the resulting dilated sets by $G$ and $E$. We will keep the scaling factor so that our $rG$ and $rE$ correspond, respectively, to their $G$ and $E$. With this in mind, note that \cite[Inequality (3.30)]{Fi} is valid up to a translation. Indeed, this translation is obtained by applying \cite[Lemma 3.1]{Fi} to the functions $S_i$ and the set $rG$, where $S(x)=\tilde{T}_0(x)-x$, and $\tilde{T}_0$ is the Brenier map between $rG$ and $K_0$. Since $a=\hat{\alpha} -x_0$ in Lemma $\ref{15}$ was obtained by the same exact process, $a$ satisfies \cite[Inequality (3.30)]{Fi}. Thus, by the estimates under \cite[Inequality (3.30)]{Fi} it follows that     
\begin{align*} 
C(n,K)\sqrt{\delta_{K_0}(rG)} &\geq \int_{\mathcal{F}(rG)} \big|1-||x-a||_{K_0} \big| ||\nu_{rG}(x)||_{K_0*}dH^{n-1}(x)\\
&= \int_{\mathcal{F}(rG-a)} \bigl |1-||x||_{K_0} \bigr| ||\nu_{(rG-a)}(x)||_{K_0*}dH^{n-1}(x)\\
&\geq \frac{m_{K_0}}{M_{K_0}}|(rG-a) \setminus K_0|.
\end{align*}  
Therefore, we have 
\begin{align}
|(rG) \Delta (\hat{\alpha}+K)|&=|(rG) \Delta (a+K_0)|=2|(rG-a)\setminus K_0| \nonumber \\
&\leq 2\tilde{C}(n,K)\sqrt{\delta_{K_0}(G)}\leq 2\tilde{C}(n,K)\sqrt{\mu(G)}, \label{101}
\end{align}
and this implies  
\begin{align}
|(rE) \Delta (\hat{\alpha}+K)| &\leq |(rE) \Delta (rG)|+|(rG) \Delta (\hat{\alpha}+K)| \nonumber \\
&\leq 2r^n|E \setminus G|+ 2\tilde{C}(n,K)\sqrt{\mu(G)} \label{102}.
\end{align}
Recalling that $|E\setminus G|=|E|-|G| \leq \frac{|E|}{k(n)} \mu(E)$ (see (\ref{23})), $|E|=|K|$, and $\mu(E)$ is small, it readily follows that (see (\ref{28a})) 
\begin{equation} \label{rbs}
r \leq 1+\frac{2\mu(E)}{k(n)},
\end{equation} 
and we obtain (\ref{l3}). Also, (\ref{101}), (\ref{102}), (\ref{rbs}), and $\mu(G) \leq C_{\ref{21a}}(n,K) \mu(E)$ (see (\ref{24d})) imply the existence of a positive constant $C(n,K)$ so that 
\begin{equation} \label{104}
|(rE) \Delta (\hat{\alpha}+K)| \leq C(n,K)\sqrt{\mu(E)},   
\end{equation}
\begin{equation} \label{105}
|(rG) \Delta (\hat{\alpha}+K)| \leq C(n,K) \sqrt{\mu(E)}.  
\end{equation}
Moreover, (\ref{rbs}) and (\ref{104}) imply 
\begin{align*}
|E \Delta (\alpha+K)| &\leq |E \Delta (\alpha+\frac{1}{r}K)|+|(\alpha+\frac{1}{r} K) \Delta (\alpha+K)|\\
&\leq \frac{1}{r^n} |(rE) \Delta (\hat{\alpha}+K)|+|K \setminus \frac{1}{r}K| \\
&\leq C(n,K)\sqrt{\mu(E)}+|K|\bigg(\frac{r-1}{r}\bigg)\\
&\leq C(n,K)\sqrt{\mu(E)}+|K|\frac{2}{k(n)} \mu(E).
\end{align*}
By combining this together with (\ref{105}), we readily obtain (\ref{16a}) and (\ref{l2}).    
\end{proof}

\noindent \textbf{Acknowledgments.} We wish to thank Eric Baer for his careful remarks on a preliminary version of the paper.
We are also grateful to an anonymous referee for his valuable comments and suggestions
that significantly improved the presentation of the paper.
The first author was supported by NSF grant DMS-$0969962$. The second author was supported by an NSF-RTG fellowship for graduate studies at the University of Texas at Austin.  
Any opinions, findings, and conclusions or recommendations expressed in this material are
those of the authors and do not necessarily reflect the views of the US National Science Foundation (NSF).

\signaf
\signei


\begin{thebibliography}{99}
\bibitem{AA} A. Alberti and L. Ambrosio, A geometrical approach to monotone functions in $\mathbb{R}^n$, \textit{Math. Z.} 230 (1999), no. 2, 259-316.
\bibitem{AFP} L. Ambrosio, N. Fusco, and D. Pallara, Functions of bounded variation and free discontinuity problems. Oxford Mathematical Monographs. The Clarendon Press, \textit{Oxford University Press}, New York, 2000.
\bibitem{Bia} G. Bianchi and H. Egnell, A note on the Sobolev inequality, \textit{J. Funct. Anal.} 100 (1991), pp. 18-24.
\bibitem{BKK} W. Blaschke, Kreis und Kugel, Berlin. \textit{de Gruyter}, 2nd edn. (1956).
\bibitem{Br} Y. Brenier, Polar factorization and monotone rearrangement of vector-valued functions, \textit{Comm. Pure Appl. Math.} 44 (4) (1991) 375-417.
\bibitem{BM} J.E. Brothers and F. Morgan, The isoperimetric theorem for general integrands, \textit{Michigan Math. J.} 41 (1994), no. 3, 419-431.
 \bibitem{BZ}Y.D. Burago and V.A. Zalgaller, Geometric inequalities. Berlin: \textit{Springer Verlag} (1988). Original russian edition: Geometricheskie neravenstva, Leningrad.
\bibitem{IC}I. Chavel, Isoperimetric inequalities. Cambridge Tracts in Mathematics, 145, \textit{Cambridge University Press} (2001).
\bibitem{CF} E. Carlen and A. Figalli, Stability for a GNS inequality and the Log-HLS inequality, with application to the critical mass Keller-Segel equation, preprint, 2011.
\bibitem{Cia} A. Cianchi, N. Fusco, F. Maggi, and A. Pratelli, The sharp Sobolev inequality in quantitative form, \textit{Journal of the European Mathematical Society}, (2009) 11, n.5, 1105-1139. 
\bibitem{Cord} D. Cordero-Erausquin, B. Nazaret, and C. Villani, A new approach to sharp Sobolev and Gagliardo-Nirenberg inequalities, \textit{Adv. Math.} 182, 2 (2004), 307-332.
\bibitem{DP}B. Dacorogna and C.E. Pfister, Wulff theorem and best constant in Sobolev inequality, \textit{J. Math. Pures Appl.} (9) 71 (2) (1992) 97-118.
\bibitem{Di} A. Dinghas, \"Uber einen geometrischen Satz von Wulff f\"ur die Gleichgewichtsform von
Kristallen, \textit{(German) Z. Kristallogr., Mineral. Petrogr.} 105, (1944).
\bibitem{EFT}L. Esposito, N. Fusco, and C. Trombetti, A quantitative version of the isoperimetric inequality: the anisotropic case, \textit{Ann. Sc. Norm. Super. Pisa Cl. Sci.} (5) 4 (2005), no. 4, 619-651.
\bibitem{Fi4} A. Figalli, F. Maggi, On the shape of liquid drops and crystals in the small mass regime, \textit{Arch. Ration. Mech. Anal.}, to appear.
\bibitem{Fi} A. Figalli, F. Maggi, and A. Pratelli, A mass transportation approach to quantitative isoperimetric inequalities, \textit{Invent. Math.}, 182 (2010), no. 1, 167-211.
\bibitem{Fi2} A. Figalli, F. Maggi, and A. Pratelli, A refined Brunn-Minkowski inequality for convex sets,  \textit{Ann. Inst. H. Poincar{\'e} Anal. Non Lineaire}, 26  (2009),  no. 6, 2511-2519.
\bibitem{Fi3} A. Figalli, F. Maggi, and A. Pratelli, A note on Cheeger sets, \textit{Proc. Amer. Math. Soc.}, 137 (2009), 2057-2062.
\bibitem{Fi5} A. Figalli, F. Maggi, and A. Pratelli, Sharp stability theorems for the anisotropic Sobolev and log-Sobolev inequalities on functions of bounded variation. Preprint, 2011.
\bibitem{FM} I. Fonseca and S. M\"uller, A uniqueness proof for the Wulff theorem, \textit{Proc. Roy. Soc. Edinburgh Sect.} A 119 (1991), no. 1-2, 125-136.
\bibitem{morganbook} F. Morgan, Riemannian geometry. A beginner's guide. Second edition. \textit{A. K. Peters, Ltd.}, Wellesley, MA, 1998.
\bibitem{morgan} F. Morgan, The Levy-Gromov isoperimetric inequality in convex manifolds with boundary, \textit{J. Geom. Anal.} 18  (2008),  no. 4, 1053-1057.
\bibitem{NF} N. Fusco, The classical isoperimetric theorem, \textit{Rend. Acc. Sci Fis. Mat. Napoli}, Vol 71, (2004), 63-107.
\bibitem{FMP}N. Fusco, F. Maggi, and A. Pratelli, The sharp quantitative isoperimetric inequality, \textit{Ann. of Math.} 168 (2008), 941-980.
\bibitem{Gu}M.E. Gurtin, On a theory of phase transitions with interfacial energy, \textit{Arch. Rational
Mech. Anal.} 87 (1985), no. 3, 187-212.
\bibitem{He}C. Herring, Some theorems on the free energies of crystal surfaces, \textit{Phys. Rev.} 82 (1951), 87-93.
\bibitem{LP}P. L. Lions and F. Pacella, Isoperimetric inequalities for convex cones, \textit{Proc. Amer. Math. Soc.} 109 (1990) 477-485.
\bibitem{Ma}F. Maggi, Some methods for studying stability in isoperimetric type problems, \textit{Bull. Amer. Math. Soc.}, 45 (2008), 367-408.
\bibitem{McC2} R.J. McCann, A convexity principle for interacting gases, \textit{Adv. Math.} 128 (1) (1997) 153-179.
\bibitem{McC1} R.J. McCann, Existence and uniqueness of monotone measure-preserving maps, \textit{Duke Math. J}. 80 (2) (1995) 309-323.
\bibitem{MS} V.D. Milman and G. Schechtman, Asymptotic theory of finite-dimensional normed spaces.
With an appendix by M. Gromov. Lecture Notes in Mathematics, 1200. \textit{Springer-Verlag},
Berlin, 1986. viii+156 pp.
\bibitem{Mur} C.B. Muratov, Droplet phases in non-local Ginzburg-Landau models with Coulomb repulsion in two dimensions. \textit{Comm. Math. Phys.}  299  (2010),  no. 1, 45-87.
\bibitem{Os}R. Osserman, The isoperimetric inequality, \textit{Bull. Amer. Math. Soc.}, 84 (1978), 1182-1238.
\bibitem{RR} M. Ritor\'e and C. Rosales, Existence and characterization of regions minimizing perimeter under a volume constraint inside Euclidean cones, \textit{Trans. Amer. Math. Soc.} 356 (2004), no. 11, 4601-4622.
\bibitem{BMrmk} A. Segal, Remark on Stability of Brunn-Minkowski and Isoperimetric Inequalities for Convex Bodies, preprint, 2011.
\bibitem{GT}G. Talenti, The standard isoperimetric theorem, \textit{Handbook of convex geometry}, Vol. A, P.M. Grueber and J.M. Willis, eds., 73-123, Amsterdam, North Holland (1993).
\bibitem{Ty}J.E. Taylor, Crystalline variational problems, \textit{Bull. Amer. Math. Soc.} 84 (1978), no. 4,
568-588.
\bibitem{VS}J. Van Schaftingen, Anisotropic symmetrization, \textit{Ann. Inst. H. Poincar{\'e} Anal. Non Lineaire} 23 (2006), no. 4, 539-565.
\bibitem{Wu}G. Wulff, Zur Frage der Geschwindigkeit des Wachstums und der Aufl\"osung der Kristallfl\"achen, \textit{Z. Kristallogr.} 34, 449-530.

\end{thebibliography}
\end{document}